\DeclareMathAlphabet{\mathcal}{OMS}{cmsy}{m}{n}
\SetMathAlphabet{\mathcal}{bold}{OMS}{cmsy}{b}{n}
\newcommand{\red}[1]{\textcolor{black}{#1}}
\newtheorem{assumption}{Assumption}
\begin{document}

\title{Nested Stochastic Algorithm for Generalized Sinkhorn distance-Regularized Distributionally Robust Optimization}

\author{\name Yufeng Yang \email yufeng.yang@tamu.edu \\
       \addr Department of Computer Science and Engineering\\
       Texas A\&M University\\
       College Station, TX 77843, USA
       \AND
       \name Yi Zhou \email yi.zhou@tamu.edu \\
       \addr Department of Computer Science and Engineering\\
       Texas A\&M University\\
       College Station, TX 77843, USA
       \AND
       \name Zhaosong Lu\email zhaosong@umn.edu \\
       \addr Department of Industrial and Systems Engineering\\
       University of Minnesota \\
       Minneapolis, MN 55455, USA}

\editor{}

\maketitle

\begin{abstract}
Distributionally robust optimization (DRO) is a powerful technique to train robust models against data distribution shift. This paper aims to solve regularized nonconvex DRO problems, where the uncertainty set is modeled by a so-called generalized Sinkhorn distance and the loss function is nonconvex and possibly unbounded. Such a distance allows to model uncertainty of distributions with different probability supports and divergence functions. For this class of regularized DRO problems, we derive a novel dual formulation taking the form of nested stochastic optimization, where the dual variable depends on the data sample. 
To solve the dual problem, we provide theoretical evidence to design a nested stochastic gradient descent (SGD) algorithm, which leverages stochastic approximation to estimate the nested stochastic gradients. We study the convergence rate of nested SGD and establish polynomial iteration and sample complexities that are independent of the data size and parameter dimension, indicating its potential for solving large-scale DRO problems. We conduct numerical experiments to demonstrate the efficiency and robustness of the proposed algorithm.
\end{abstract}
\begin{keywords}
  Distributionally Robust Optimization; Lagrange
Duality; Nested Stochastic Optimization; First-order Algorithm; Adversarial Robustness.
\end{keywords}

\section{Introduction}
In classic machine learning, the primary goal is to achieve good predictive performance in the test set after training the model on a designated training set. The training problem is typically formulated as an empirical risk minimization (ERM) problem \citep{vapnikERM}. 
%In particular, stochastic gradient descent (SGD) \citep{ghadimi2013stochastic} and its variants \citep{ghadimi2016accelerated,adam,rmsprop,SNAG,Natasha,JMLR:v12:duchi11a,sarah, acceleratingsgd,SHB} have been widely applied to solve this type of problems. 
However, empirical risk minimization assumes that the training set and the test set follow the same underlying data distribution, which is often unrealistic and may result in poor test performance when data distribution shift exists. 

Data distribution shift is prevalent in real-world scenarios. It can be caused by many factors such as sampling bias, presence of anomalies, data merging and change of measurements, etc. To tackle this challenge, distributionally robust optimization (DRO) \citep{scarf1958} was proposed, which formulates the objective function as a min-max problem. DRO aims to learn a robust model by minimizing the expected risk over the worst-case data distribution within a predefined ambiguity set. This formulation offers a principled framework to learn the optimal resilient solution in the face of distribution uncertainty.

One key factor in DRO is the selection of an appropriate divergence measure for modeling the ambiguity set. Specifically, the divergence measure should not only be computationally tractable but also yield a solution that avoids excessive conservatism. In the existing literature, various divergence-based ambiguity sets have been studied. In \citet{wasserportfolio,wassersteinframework,NIPS2015_cc1aa436,esfahani2017datadriven,gaosa,LUO201920, joseOTDRO}, the authors focused on reformulating the expressions of objective functions under the worst-case distributions into a tractable form and exploring possible algorithms to tackle DRO problems under Wasserstein-type ambiguity sets. For more information, we refer the readers to \citet{surveyWDRO} for a comprehensive survey on Wasserstein DRO. In \citet{Hu2012KullbackLeiblerDC,bayraksan2015data,levy2020largescale,duchi2020learning,smoothedfDRO}, the authors analyzed alternative expressions of objective functions under the worst-case distribution, developed algorithms to solve DRO problems under Kullback–Leibler (KL) and $f$-divergence based ambiguity sets. However, the aforementioned divergence measures have certain limitations. For example, it is known that DRO with Wasserstein distance requires high computational complexity \citep{emdcomplexity,ba2009sublinear}. Also, both KL and $f$-divergence are not symmetric when assessing distributions. Furthermore, these two divergence measures require that the distributions share the same probability support, a strong condition that may fail to capture extreme distributions at certain points. We refer readers to check Examples \ref{ex: 1} and \ref{ex: 2} for some concrete applications.

The Sinkhorn distance, first introduced in \citet{cuturi2013Sinkhorn}, was designed to address the aforementioned limitations. Sinkhorn distance is symmetric and allows distributions from the same sample space to have different probability supports. Furthermore, Sinkhorn distance is a convex function with respect to distributions, ensuring computation tractability and efficiency for large-scale problems. In \citet{wang2023sinkhorn,joseOTDRO,SinkhornDROesaim}, constrained Sinkhorn DRO was initially investigated. Specifically, \citet{wang2023sinkhorn} derived the equivalent formulation of constrained Sinkhorn DRO problem and solved it via stochastic mirror descent algorithm, marking the first work to solve constrained Sinkhorn DRO using first-order optimization methods. However, the convergence analysis conducted in their work assumed that the loss function is convex and bounded, which may not hold in practical modern machine learning applications. Furthermore, the log-exponential compositional structure induced by the conjugate dual of the KL-divergence makes the objective function difficult to optimize and hinders convergence.

In this work, we develop and study dual formulations of \textit{generalized} Sinkhorn-distance regularized DRO problems (see formulation \eqref{primal1}). The problem takes the form of a nested stochastic optimization with a contextual variable. Unlike traditional stochastic optimization problems, our formulation makes dual decision variable dependent on data sample following distribution $\mathbb{P}$. \red{ \citet{hu2023contextual} studied contextual bilevel optimization, which can be applied to solve such problem. However, their convergence analysis is not directly applicable to proposed Sinkhorn DRO dual formulation \eqref{dual2}. Specifically, their convergence guarantee relies on the strong convexity of the inner objective, a condition that does not hold in our setting. Moreover, from algorithmic perspective, while Multi-level Monte Carlo (MLMC) significantly improves the per-iteration sample complexity for computing a variance-reduced stochastic gradient estimator, it still requires storing a non-constant number of samples that scale with the target error, and second-order information remains unavoidable under bilevel optimization framework, leading to increased computational cost and unnecessary overall sample complexity for our problem. To tackle aforementioned challenges, based on tractable expression of gradient proved in \citet{jin2021nonconvex} (see Lemma \ref{thm: differentiability}) and structural relationship between outer problem's gradient and inner problem's gradient, we establish controllable approximation error of outer objective's gradient given the inner objective's gradient satisfying mild conditions (see Theorem \ref{thm: grad_error_bound}).}
This observation enables us to further utilize sample-average approximation (SAA) to estimate stochastic gradients, solve dual problem efficiently via Nested SGD algorithm (see Algorithm \ref{alg2}) and establish convergence analysis without requiring large batch size of samples and additional assumptions including convex loss and strong convexity for inner-objective assumed in \citet{wang2023sinkhorn,hu2023contextual}. Finally, we train several models including logistic regression, LeNet \citep{LeNet} over CIFAR-10 \citep{cifar10} and MNIST \citep{mnist} dataset, and conduct experiments to evaluate test performance of proposed Sinkhorn DRO dual formulation \eqref{dual2} with other baseline methods over perturbed test dataset. Our results demonstrate that the Sinkhorn DRO formulation \eqref{dual2} and our proposed nested algorithm successfully improve models' robustness against distribution shift.
%This generalized notion not only retains the advantages of the original Sinkhorn distance but also allows to use a broader range of divergences to model the ambiguity set (deleted from summary of contributions).
\subsection{Summary of Contributions}
\begin{itemize}[topsep = 1pt, itemsep=1pt]
    \item \red{To preserve the advantages of Sinkhorn distance while broadening the class of divergence measures used to model the ambiguity set}, we consider a generalized Sinkhorn distance based on the family of $f$-divergences. Building on this formulation, we transform the primal problem using inverse CDF sampling and derive the dual form of the regularized Sinkhorn DRO problem via Lagrangian duality, under which strong duality holds. The resulting dual problem takes a novel form of contextual nested stochastic optimization problem, where the variable of the inner stochastic sub-problem depends explicitly on the data samples.
    %introducing the novel concept of an in-context variable in inner problem.
    \item \red{ 
   To solve the contextual nested stochastic optimization problem, we begin by reviewing the explicit gradient formula of the proposed Sinkhorn DRO objective. By exploiting the structural relationship between the objective and its sub-problem, we derive conditions under which the gradient approximation error can be made arbitrarily small. This insight motivates the development of a practical Nested-SGD algorithm (Algorithm~\ref{alg2}), which simplifies the approach taken by the research community to solve contextual nested problems within the general bilevel optimization framework. To establish convergence guarantee, we further analyze the smoothness properties and derive second-moment upper bounds. Our convergence analysis reveals a standard sample complexity of $\mathcal{O}(\varepsilon^{-4})$ for each loop of the algorithm, resulting in a total sample complexity of $\mathcal{O}(\varepsilon^{-8})$ for non-convex loss. However, considering the one-dimensional nature of the sub-problem of the proposed Sinkhorn DRO formulation and by appropriately adjusting the batch size, the overall problem can be efficiently solved with $\mathcal{O}(\varepsilon^{-4})$ iteration complexity.}
    \item To justify the efficiency of our proposed formulation and algorithm, we evaluate their practical performance on real-world datasets. The results, under distribution shifts simulated by various adversarial attack methods, demonstrate that our approach not only improves the robustness of diverse machine learning models but also scales effectively to large models and datasets.

\end{itemize}

\section{Related Work}
% This work contributes to research area related with 1. DRO; 2. Sinkhorn distance; 3. Nested contextual stochastic optimization. 
% We will go through the literatures intersects with three research lines as well as previous work which serves the cornerstone of our work.
% \\
\textsl{\textbf{DRO.}} The DRO framework shares strong connections with contrastive learning \citep{constrastive}, multiple instance learning \citep{MIP}, AUC maximization \citep{aucdroyang}, anomaly detection \citep{anomaly}, and self-supervised learning \citep{zihaoclip,xiyuanclip}. The key challenge in modeling lies in transforming the problem into a tractable formulation given the chosen ambiguity set. The first stream focuses on using information divergence to construct ambiguity set. Commonly employed divergence measures include the Wasserstein metric \citep{NIPS2015_cc1aa436,esfahani2017datadriven,gaosa,LUO201920,geometricwasserstein,residualdro}; KL divergence \citep{Hu2012KullbackLeiblerDC,bayesDRO,conicKL}; $f$-divergence \citep{levy2020largescale,phirecovering,duchi2020learning,jin2021nonconvex,namkoong2016stochastic,smoothedfDRO}; Sinkhorn distance \citep{wang2023sinkhorn, SinkhornDROesaim} and its variants \citep{UOTDRO}. \red{Recently, \citet{joseOTDRO} further proposed a framework to unify aforementioned constrained DRO problem based on optimal transport theory with conditional moment constraints.} Another stream for constructing ambiguity set is using special statistics, such as geometry shape constraints \citep{shapeDRO} and statistical moments \citep{scarf1958,momentdro,moment2dro} etc. 

\textsl{\textbf{Sinkhorn Distance.}} Sinkhorn distance has successful applications in areas like generative models \citep{autoencoders,pmlr-genevay18a}, matrix factorization \citep{nnmatrix}, image segmentation \citep{segmentation} etc. In \citet{cuturi2013Sinkhorn}, Sinkhorn matrix scaling algorithms was proposed to compute optimal transport map under Sinkhorn distance objective. Later, in \citet{altschuler2018nearlinear,aude2016stochastic}, greedy and stochastic variants of Sinkhorn scaling algorithms were proposed to clarify relationship between algorithm convergence and input dimensions. Some works also study Sinkhorn distance computation over data samples with special structures. In \citet{semidiscreteOT,altschuler2019massively}, they propose variant algorithms specially applying to data samples over compact Riemannian manifolds and Euclidean balls respectively. %However, these investigations emphasize on computation of optimal transport plans, which is redundant in DRO. This motivates our study for Sinkhorn DRO without the explicit computation of the optimal transport map.

\textsl{\textbf{Algorithms for Solving DRO.}} For Wasserstein metric ambiguity set, several works reformulate primal problem into tractable forms such as convex programming \citep{NIPS2015_cc1aa436},  
semidefinite programming \citep{LUO201920} and mixed integer programming \citep{MIP}. Subsequent works \citep{NIPS2015_cc1aa436, MIP} transform problems into the form which is directly solvable using software toolbox. In \citet{LUO201920}, they use cutting-surface method to solve semi-definite programming for general nonlinear objective and branch-and-bound algorithms for bilinear objective.
Another common technique to transform DRO is using duality theory \citep{gaosa,levy2020largescale}. Through this way, computation of shifted distribution can be avoided. In \citet{qidro1}, projected SGD and acceleration is used to solve the dual form of KL divergence constrained DRO. In \citet{jin2021nonconvex}, normalized SGD with momentum is used to optimize the dual of $f$-divergence regularized DRO. Later \citet{zhangrevistingDRO} revisits $f$-divergence regularized DRO and propose double SGD
with clipping to solve it. In \citet{qidro2}, stochastic Frank-Wolfe method is used to solve approximation for the dual of general Cressie-Read family divergence constrained DRO. \citet{wang2023sinkhorn} used stochastic mirror descent to solve the dual form of Sinkhorn distance constrained DRO and \citet{UOTDRO} used a projected sub-gradient method to solve the dual form of unbalanced Wasserstein distance constrained DRO. \citet{hu2023contextual} empirically applied contextual bilevel optimization methods to solve the primal formulation of Wasserstein DRO with side information.

\section{Notations}
\red{Throughout this work, we denote $\xi \sim \mathbb{Q}$ as data drawn from the underlying distribution $\mathbb{Q}$, $\zeta \sim \mathbb{P}$ as data drawn from the nominal distribution $\mathbb{P}$, and their corresponding reference measures are denoted by $\mu$ (for $\mathbb{P}$) and $\nu$ (for $\mathbb{Q}$). We denote $\mathcal{N}(\cdot,\cdot)$ as normal distribution following certain mean and variance. For the primal problem \eqref{primal1}, we denote $\ell(x;\xi)$ as the loss function associated with sample $\xi$ and parameter $x \in \mathbf{R}^d$ (i.e., the weights of a linear model or neural network), $\lambda$ as the regularization coefficient. For generalized Sinkhorn distance used to model the ambiguity set (see Definition \ref{gSinkhorn}), we denote $c(\cdot,\cdot)$ as the cost metric for measuring \textit{proximity} between samples $\zeta$ and $\xi$, $\beta$ as the regularization coefficient, $D_f$ as the $f$-divergence. In primal-dual problem reformulation, we introduce $\eta$ as Lagrange multiplier and $f^{*}(\cdot)$ as the conjugate dual function induced by the chosen information divergence.
For the proposed dual formulation~\eqref{dual2}, when $\eta_x^*(\zeta)= \arg\min\mathcal{L}_{\zeta}(x,\eta)$, we denote $\Psi(x)=\mathcal{L}_{\zeta}(x,\eta_x^*(\zeta))$ for simplicity, and use $\nabla \Psi(x)$ to denote the gradient with respect to $x$. For functions including $\mathcal{L}_{\zeta},\mathcal{L}_{\zeta,\xi}:\mathbf{R}^d\times \mathbf{R}\rightarrow \mathbf{R}$, which takes two arguments $x\in\mathbf{R}^d$, $\eta\in \mathbf{R}$, we use $\nabla_1$, $\nabla_2$ to denote the gradient with respect to $x$ and $\eta$.
For Algorithm \ref{alg1}, which optimizes the inner objective \eqref{eq: inner problem}, we denote $\alpha_d$ as the learning rate, $v(\cdot)$ as the stochastic gradient estimator with respect to $\eta$, $\tilde{d}$ as the output index, $\tilde{B}$ as the batch size, and $\tilde{\varepsilon}$ as the scaled target error. For Algorithm \ref{alg2}, which optimizes the dual formulation~\eqref{dual2}, we denote $\gamma_t$ as the learning rate, $\hat{g}^{B}(\cdot),g^{B}(\cdot)$ as inexact, exact stochastic gradient estimator of dual formulation~\eqref{dual2} with respect to $x$, $\tilde{t}$ as the algorithm’s output index, $B$ as the batch size, and $\varepsilon$ as the target error passed to Algorithm \ref{alg2}. Through this work, we denote $\|\cdot\|$, $\|\cdot\|_1$, and $\|\cdot\|_{\infty}$ as $\ell_2$, $\ell_1$, and $\ell_{\infty}$ norms over Euclidean space, respectively.}

\section{Regularized Nonconvex DRO with Generalized Sinkhorn Distance}

In this section, we first introduce a class of \textit{regularized} nonconvex distributionally-robust optimization (DRO) problems, where the data distribution uncertainty is quantified by generalized Sinkhorn distance. \red{We then study its strong dual formulation in Theorem \ref{thm: dual formulation} and compare it with the strong dual formulation of \textit{constrained} DRO quantified by Sinkhorn distance obtained in \citet{wang2023sinkhorn}}.

% \subsection{Regularized Nonconvex DRO with Generalized Sinkhorn Distance}
% In this section, we first introduce a class of regularized nonconvex distributionally-robust optimization (DRO) problems, where the data distribution uncertainty is quantified by generalized Sinkhorn distance. Then, we study its dual formulation.

\subsection{Problem Formulation}

In distributionally-robust optimization (DRO), the goal is to learn a model which achieves good and robust performance when the underlying data distribution is uncertain. Specifically, consider a machine learning problem with the nonconvex loss function denoted by $\ell(x;\xi)$, where $x\in \mathbf{R}^d$ denotes the collection of model parameters and $\xi$ corresponds to a data sample that follows an underlying data distribution $\mathbb{Q}$. Then, with a regularization parameter $\lambda>0$, we study the following regularized DRO problem, which is a popular formulation in robust machine learning \citep{regularizedro1, regularizedvariation2,regularizeddro3}.
\begin{align}
    \min_{x \in \mathbf{R}^d} \sup_{\mathbb{Q}} \Bigl \{ \mathbb{E}_{\xi \sim \mathbb{Q}} \bigr [ \ell(x;\xi)\bigr] - \lambda W_{\red{\beta}}(\mathbb{P}, \mathbb{Q}) \Bigl \},
    \label{primal1}
\end{align}
where $W_{\red{\beta}}(\mathbb{P}, \mathbb{Q})$ denotes a certain function (with parameter $\red{\beta}>0$) that measures the 
distance between a nominal data distribution $\mathbb{P}$ and the underlying data distribution $\mathbb{Q}$. In particular, the operation $\min_{x}\sup_{\mathbb{Q}}$ aims to optimize the model $x$ under the worst-case data distribution $\mathbb{Q}$ to enhance model robustness against the distribution shift from the nominal distribution $\mathbb{P}$. In the existing literature, many studies have considered KL-divergence \citep{Hu2012KullbackLeiblerDC,bayesDRO}, $f$-divergence \citep{levy2020largescale,duchi2020learning,jin2021nonconvex,namkoong2016stochastic,smoothedfDRO} and Wasserstein distance \citep{NIPS2015_cc1aa436,esfahani2017datadriven,gaosa,LUO201920,joseOTDRO} to quantify the above distribution shift. However, both the KL-divergence and the $f$-divergence require $\mathbb{Q}$ and $\mathbb{P}$ to have the same probability support, which is restrictive and undesirable for many machine learning applications. The following two examples illustrate the limitations of $f$-divergence.
\begin{example}\label{ex: 1}
In robust Markov Decision Process (MDP) \citep{geometryRMDPValue}, denote the underlying environment's transition kernel as $\mathbb{Q}$. Then, robust reinforcement learning aims to optimize the following robust state value function over the policy $\pi$.
%our goal is to find the optimal policy $\pi^*$ under the probability transition kernel $\mathbb{Q}$. The corresponding optimal robust value function can be written as 
\begin{align}
V^\pi(s) := \inf_{\mathbb{Q}: D_f(\mathbb{Q}|\mathbb{P})\leq \rho}\mathbb{E}\Big[\sum_{t=0}^{\infty} \gamma^t r_{t} \mid \pi, s_0=s\Big],\nonumber
\end{align}
where $\gamma$ is a discount factor, $s$ corresponds to the state and $r_{t}$ represents the reward obtained after the $t$-th state transition. Here, the robust value function $V^\pi(s)$ considers the worst-case environment transition kernel $\mathbb{Q}$ over the uncertainty set defined by the 
$f$-divergence, i.e., $\{\mathbb{Q}: D_f(\mathbb{Q}|\mathbb{P})\leq \rho \}$. In this setting, if the nominal transition kernel $\mathbb{P}$ cannot visit a certain crucial state $s$, then neither can the transition kernels $\mathbb{Q}$ from the uncertainty set visit that state $s$. This indicates that $f$-divergence does not handle ``unknown'' uncertainties (e.g., states that never visited by $\mathbb{P}$).
%If  is utilized to characterize the distribution uncertainty of transition kernel, i.e.,  it requires the underlaying distribution $\mathbb{Q}$ has the same the support with the nominal distribution $\mathbb{P}$. That indicates if the transition probability from $s_i$ to $s_j$ equal to 0, i.e., $\mathbb{P}(s_i|a_k,s_j)=0$ , then it must hold $\mathbb{Q}(s_i|a_k,s_j)=0$. Such restriction may result learned optimal policy $\pi^*$ failing to capture the state transition which may be detached from real application if such state transition is crucial in reality.
\end{example}

\begin{example}\label{ex: 2}
%In distributionally-robust optimization (DRO), one aims to learn a model that is robust to data distribution shift. For example, 
Consider the following $f$-divergence-regularized DRO problem.
\begin{align}
    \min_{x \in \mathbf{R}^d} \sup_{\mathbb{Q}}\Big\{\mathbb{E}_{\xi \sim \mathbb{Q}}\big[\ell(x;\xi) \big] - \lambda D_f(\mathbb{Q}|\mathbb{P}) \Big\},\nonumber
\end{align}
where 
%$\ell(x;\xi)$ corresponds to the loss associated with data sample $\xi$ and model parameters $x$, and 
the distribution shift on data is characterized by the $f$-divergence. Suppose we want to train a face detection model. If the nominal data distribution $\mathbb{P}$ only covers face images collected from the majority group and excludes the minority groups, then the $f$-divergence DRO cannot yield a robust model over all the groups.
%is a tensor collecting all parameters from each layer of neural network, the loss function $\ell$ evaluates the difference between the output between output from neural network (denoted as $\mathcal{NN}(\mathbf{W};\xi)$) with respect to true label $y$ Similarly, if the nominal distribution $\mathbb{P}$ over dataset ignores the minority classes, the same case will happen for shifted distribution $\mathbb{Q}$, which cannot eliminate fairness issue over minority cases.
\end{example}
On the other hand, the classic Wasserstein distance does not require the distributions $\mathbb{P}, \mathbb{Q}$ to have the same probability support. However, it is known that Wasserstein distance suffers from computational intractability for high-dimension data \citep{emdcomplexity,ba2009sublinear}, and hence is not suitable for large-scale problems in machine learning.

To tackle these challenges, inspired by the Wasserstein distance, Sinkhorn distance \citep{cuturi2013Sinkhorn,wang2023sinkhorn}, we consider the following \textit{generalized} Sinkhorn distance to quantify the data distribution shift. To elaborate, we consider a sample space $\Omega$ associated with $\sigma$-algebra $\mathcal{F}$. Furthermore, for distributions $\mathbb{Q}$, $\mathbb{P}$ over a measurable subset of $\mathcal{F}$, we assume they are absolutely continuous with regard to some reference measures $\nu$ and $\mu$, i.e., $\mathbb{Q} \ll \nu$, $\mathbb{P} \ll \mu$. 
\begin{definition}[Generalized Sinkhorn Distance]\label{gSinkhorn}
 Consider probability distributions $\mathbb{Q}$ and $\mathbb{P}$ over $\bigr(\Omega,\mathcal{F} \bigr)$ and let $\nu$ and $\mu$ be reference measures satisfying $\mathbb{Q}\ll \nu$,$\mathbb{P}\ll \mu$.
 Denote $\Gamma(\mathbb{P}, \mathbb{Q})$ as the set of joint distributions that have marginal distributions $\mathbb{P}, \mathbb{Q}$. For a fixed regularization parameter $\red{\beta}>0$ and a cost metric $c: \Omega\times\Omega \to \mathbf{R}$, the generalized Sinkhorn distance is defined as
\begin{align}
    W_{\red{\beta}}(\mathbb{P}, \mathbb{Q}) =\inf_{\gamma \in \Gamma(\mathbb{P}, \mathbb{Q})}\Bigl \{\mathbb{E}_{(\zeta, \xi) \sim \gamma}\bigr [c(\zeta, \xi)\bigr ]+\red{\beta} D_f(\gamma | \mathbb{P} \otimes \nu)\Bigl \},\nonumber
\end{align}
where $D_f$ corresponds to the $f$-divergence, that is, 
\(
D_f(\gamma | \mathbb{P} \otimes \nu) = \int f \big(\frac{\mathrm{d} \gamma(\zeta, \xi)}{\mathrm{d} \mathbb{P}(\zeta) \mathrm{d} \nu(\xi)}\big) \mathrm{d}\nu(\xi)\mathrm{d} \mathbb{P}(\zeta),\nonumber\)
where the function $f: [0,+\infty )\rightarrow  [-\infty, +\infty \bigr ]$ is convex and satisfies $f(1)=0$ and $f(0) = \lim_{t\rightarrow 0+}f(t)$, and $\frac{\mathrm{d} \gamma(\zeta, \xi)}{\mathrm{d} \mathbb{P}(\zeta) \mathrm{d} \nu(\xi)}$ represents the density ratio of $\gamma$ with respect to $\mathbb{P}\otimes \nu$.
\end{definition}

\begin{remark}
The absolute continuity condition $\mathbb{Q}\ll\nu$, $\mathbb{P}\ll\mu$ is crucial to guarantee that the generalized Sinkhorn distance is well-defined. Typical choices of the reference measure $\nu$ include the Lebesgue measure or the Gaussian measure. 
In addition, when $\mathbb{Q}\ll \nu$,   $D_f(\gamma|\mathbb{P} \otimes \nu)$ and $D_f(\gamma|\mathbb{P} \otimes \mathbb{Q})$ are equivalent up to a constant, which does not affect the optimal solution in the regularized setting. Thus, we consider the former term for simplicity. 
\end{remark}
The considered generalized Sinkhorn distance is regularized by the $f$-divergence , which generalizes the KL-divergence regularization adopted in the definition of the standard Sinkhorn distance \citep{wang2023sinkhorn}. Such generalization still allows the distributions $\mathbb{P}$ and $\mathbb{Q}$ to have different probability support. By adding $f$-divergence regularization, it preserves the joint convexity with respect to the probability distributions and thus guarantees the uniqueness of the optimal solution, which helps reduce the computation complexity. Moreover, the generalized Sinkhorn distance provides more flexibility to model data distribution uncertainty compared to other divergence-based measures \citep{levy2020largescale,jin2021nonconvex}.
\subsection{Dual Formulation}
With generalized Sinkhorn distance, the regularized DRO problem \eqref{gSinkhorn} can be rewritten as
\begin{align}
    &\min_{x\in \mathbf{R}^d}\sup_{\mathbb{Q}}\Big\{
    \mathbb{E}_{\xi \sim \mathbb{Q}}\bigr[\ell(x;\xi)\bigr]- \inf_{\gamma \in \Gamma(\mathbb{P},\mathbb{Q})}\bigr\{\mathbb{E}_{(\zeta,\xi)\sim \gamma}\bigr[\lambda c(\xi,\zeta)\bigr]+\lambda\red{\beta} D_{f}(\gamma | \mathbb{P}\otimes \nu) \bigr\} \Big\}.
    \label{primal1-0}
\end{align}
The primal Sinkhorn distance regularized DRO problem \eqref{primal1-0} is hard to solve, since it is challenging to obtain an analytical form of the worst-case distribution $\mathbb{Q}$. However, the generalized Sinkhorn distance involves special structures that can transform the primal regularized DRO problem \eqref{primal1} into a simpler dual form. The following theorem deduces an equivalent dual formulation (\red{See Appendix \ref{proof: dual formulation thm} for proof details}).
\begin{restatable}[Dual formulation]{theorem}{dual}
\label{thm: dual formulation}
    The DRO problem \eqref{primal1-0} has the following equivalent dual formulation
    \begin{align}
    &\min_{x\in \mathbf{R^d}} \mathbb{E}_{\zeta \sim \mathbb{P}}[\Psi_{\zeta}(x)], ~\text{where }\Psi_{\zeta}(x) = \min_{\eta \in \mathbf{R}}    \mathbb{E}_{\xi \sim \nu}\big[\underbrace{\lambda \red{\beta} f^*\big(\frac{\ell(x;\xi)-\lambda c(\zeta,\xi)-\eta}{\lambda \red{\beta}}\big)+\eta\big]}_{\mathcal{L}_{\xi,\zeta}(x,\eta)},
    %\min_{x\in \mathbf{R^d}} \mathbb{E}_{\zeta \sim \mathbb{P}}\Big[ \Psi_{\zeta}(x) := \min_{\eta \in \mathbf{R}}    \mathbb{E}_{\xi \sim \nu}\big[\lambda \red{\beta} f^*(\frac{\ell(x;\xi)-\lambda c(\zeta,\xi)-\eta}{\lambda \red{\beta}})\big]+\eta \Big],
    \label{dual2}
    \end{align}
    and $f^*$ denotes the conjugate function of $f$.
\end{restatable}
\begin{remark}[Technical Novelty]
Proving the equivalence between the primal problem \eqref{primal1-0} and the dual problem \eqref{dual2} is crucial. To elaborate, we first decompose the joint distribution as $\gamma(\zeta,\xi) = \gamma_{\zeta}(\xi)\mathbb{P}(\zeta)$, where $\gamma_{\zeta}$ corresponds to the conditional distribution over $\xi$. Then, by the principle of interchangeability (Theorem 7.92, Chapter 7.3.2 in \citet{reference4interchangebility}), we are able to swap the order between $\mathbb{E}_{\zeta\sim \mathbb{P}}$ and 
$\sup_{\gamma_{\zeta}}$ without changing the optimal value, which yields
\begin{align}
     \min_{x\in \mathbf{R}^d} \mathbb{E}_{\zeta \sim \mathbb{P}}  \Big[&  \Psi_{\zeta}(x) = \sup_{
      \gamma(\xi|\zeta)} \big \{ \mathbb{E}_{\xi \sim \gamma(\cdot|\zeta)}\bigr[\ell(x; \xi) - \lambda c(\zeta, \xi )\bigr]-\lambda \red{\beta} D_f\big(\gamma(\xi|\zeta) | \nu(\xi)\big) \big \}\Big ].\nonumber
\end{align}
Then, by utilizing techniques of data processing inequality, we show that $\Psi_{\zeta}(x)$ is equivalent as follows auxiliary function
\begin{align}
    \widetilde{\Psi}_{\zeta}(x)=\sup_{ {\mu}_{\gamma|\zeta}} \!\Big\{\! \mathbb{E}_{{\mu}_{\gamma|\zeta}} \bigr[\ell(x; \xi) \!-\! \lambda c(\zeta, \xi )\bigr] -\lambda \red{\beta} D_f({{\mu}_{\gamma|\zeta}} | {\mu}_\nu)\!\Big \}.\nonumber
\end{align}
 where $\sup_{{\mu}_{\gamma|\zeta}}$ corresponds to the supremum over all possible distributions ${\mu}_{\gamma|\zeta}$ induced by $\gamma(\xi|\zeta)$.
 Last, by utilizing inverse c.d.f sampling on $\widetilde{\Psi}_{\zeta}(x)$ introduced in \citet{levy2020largescale,duchi2020learning}, we are able to derive equivalent formulation $  \Psi_{\zeta}(x) =
 \min_{\eta \in \mathbf{R}}\int_0^1 \sup _{r \in \mathbf{R}_{+}}\\\big[rF^{-1}(u)-\eta(r-1)-\lambda\red{\beta} f(r)\big] \mathrm{d} u$, apply Lagrange duality and definition of convex conjugate dual transforming $\Psi_{\zeta}(x)$ into desired formulation.
\end{remark}
\red{ We notice that in \citet{wang2023sinkhorn, SinkhornDROesaim}, the authors also present strong dual formulation for the constrained Sinkhorn DRO problem with $D_f$ being the KL divergence and constraint radius $\rho$. Specifically, they showed the following equivalent formulation of the problem
\begin{align}
        \min_{x\in \mathbf{R}^d,\lambda>0}\Big\{\lambda\rho+ \lambda \red{\beta}
        &\mathbb{E}_{\zeta \sim \mathbb{P}}\Big[ \log\big( \mathbb{E}_{\xi \sim \nu}\big[ \exp(\frac{\ell(x;\xi)-\lambda c(\xi;\zeta) }{\lambda \red{\beta}})\big]\big)\Big]\Big\}.
        \label{baseline SinkhornDRO}
\end{align} 
Such dual formulation takes a compositional structure of the form $\mathbb{E}_{\zeta}\log(\mathbb{E}_{\xi}(\exp(t)))$, where controlling the variance and bias of its stochastic gradient estimator is challenging. To elaborate, \citet{wang2023sinkhorn} demonstrates that sample-average estimation of gradient leads to a suboptimal convergence rate under mild assumptions. Such limitation motivates them studying variance reduction technique-multilevel Monte Carlo method (MLMC), to improve the sample complexity thereafter.}

\red{As a comparison, for proposed Sinkhorn DRO dual formulation~\eqref{dual2} derived using Lagrange duality, %if we choose the same KL divergence, i.e., $f(t)=t\log(t)$, the resultant strong dual formulation is
%     \begin{align}
%         \min_{x \in \mathbf{R}^d} \mathbb{E}_{\zeta \sim \mathbb{P}}&\Big[\min_{\eta \in \mathbf{R}}\mathbb{E}_{\xi\sim \nu}\big[ \lambda \red{\beta}
%         \exp\big(\frac{\ell(x;\xi)-\lambda c(\zeta,\xi)-\eta}{\lambda \red{\beta}}\big)-\lambda\red{\beta}\big]+\eta\Big].\label{KLdualformulation}
%     \end{align}one can see that
it involves nested minimization structure, where the inner minimization problem is with respect to the dual variable and sample $\zeta$. Such nested structure is challenging as most nested problems are solved by using bilevel optimization frameworks, where computing hyper-gradient estimation is known to be time-consuming due to the requirement of second-order information \citep{bilevel_hessian}. Moreover, the dependency between the inner minimizer $\eta$ and $\zeta$ motivates us to reduce the number of sampled $\zeta$ to $\mathcal{O}(1)$-level per iteration. Thanks to Lemma \ref{thm: differentiability} proved by \citet{jin2021nonconvex}, which provides a tractable expression for evaluating gradients without requiring second-order information. As we later demonstrate in Theorem \ref{thm: grad_error_bound} and \ref{thm: convergenestsgd}, one can efficiently control the gradient approximation error and establish convergence of nested SGD for solving proposed dual formulation~\eqref{dual2} without querying a large batch of $\zeta$ and $\xi$ per iteration.} 

\section{Solving the Dual Problem via Nested SGD}\label{sec: SGD}
\red{To minimize optimization problem with contextual nested structure, \citet{hu2023contextual} studied bilevel SGD framework and attained near optimal sample complexity $\tilde{\mathcal{O}}(\varepsilon^{-4})$ when lower level problem is strongly convex and MLMC is used for hyper-gradient estimation. However, their analysis is not directly applicable to proposed Sinkhorn DRO dual formulation~\eqref{dual2}, as their framework assumes dependency between $\xi$ and $\zeta$, but does not account for the dependency between the dual variable $\eta$ and the sample $\zeta$. Additionally, most conjugate dual functions $f^*$ for information divergences are not globally strongly convex, which further limits the applicability of their analysis. As we mentioned before, \citet{wang2023sinkhorn,hu2023contextual} showed that MLMC requires storing multiple samples of $\zeta$ up to $\mathcal{O}(\log(\varepsilon^{-1}))$ to construct a stochastic estimator with $\mathcal{O}(\varepsilon)$-level approximation error, and second-order derivative remains unavoidable for evaluating hyper-gradient in contextual bilevel objective.}

\red{Motivated by these bottlenecks, in this section, we present a tractable expression for computing the gradient of proposed Sinkhorn DRO dual formulation \eqref{dual2} and we then find, by exploring the structural relationship between the inner and outer objective gradients, that the requirements of strong convexity assumption and sampling multiple $\zeta$ can be eliminated to control the approximation error between $\nabla\mathbb{E}_{\zeta}[\Psi_{\zeta}(x)]$ and $\nabla_1\mathbb{E}_{\zeta}[\mathcal{L}_{\zeta}(x,\eta_x^{\tilde{d}}(\zeta))]$ given $\eta_x^{\tilde{d}}(\zeta)$ is an output of stochastic oracle satisfying mild conditions.} Recall that the dual formulation~\eqref{dual2} consists of two stochastic optimization problems. 
%In this section, we explore the mathematical structures of the main problem $\min_{x} \mathbb{E}_{\zeta \sim \mathbb{P}}[\Psi_{\zeta}(x)]$ and potential algorithm design. 
Throughout, we denote the objective function of the inner problem in~\eqref{dual2} as
\begin{align}
    \mathcal{L}_{\zeta}(x,\eta)=\mathbb{E}_{\xi \sim \nu}\big[\lambda \red{\beta} f^* \big(\frac{\ell(x;\xi)-\lambda c(\zeta,\xi)-\eta}{\lambda \red{\beta}}\big)\big]+\eta.
    \label{eq: inner problem}
\end{align}
For simplicity, we denote $\eta_x^*(\zeta)=\arg\min_{\eta}L_{\zeta}(x,\eta)$ to highlight its dependence on the fixed parameter $x$ and data sample $\zeta$. We also denote $\mathbb{E}_{\zeta}\bigr[\Psi_{\zeta}(x)\bigr]$ as the objective function of the outer problem.

To analyze the problem structure, we adopt the following standard assumptions on the loss functions and convex conjugate dual of chosen $f$-divergence. 
\begin{assumption}\label{assum1}
The functions in Sinkhorn DRO dual formulation~\eqref{dual2} satisfy:
\begin{itemize}[topsep = 1pt, itemsep=1pt]
    %\item \red{For any $x,\eta$, the expectation over $\xi, \zeta$ for $\mathcal{L}_{\xi,\zeta}(x,\eta)$ and $\mathcal{L}_{\zeta}(x,\eta)$ defined in \eqref{dual2} exists.}
    %\item \red{For any $x,\eta$, the expectation and gradient are inter-changeable,
    %i.e., $\mathbb{E}_{\zeta}[\nabla_{1\text{ or }2}\mathcal{L}_{\zeta}(x,\eta)]=\nabla_{1\text{ or }2} \mathbb{E}_{\zeta}[\mathcal{L}_{\zeta}(x,\eta)]$ and $\mathbb{E}_{\xi}[\nabla_{1\text{ or }2}\mathcal{L}_{\zeta,\xi}(x,\eta)]=\nabla_{1\text{ or }2} \mathbb{E}_{\xi}[\mathcal{L}_{\zeta,\xi}(x,\eta)]$}. 
    \item For every $\xi$, $\ell(\cdot;\xi)$ is $G$-Lipschitz continuous.
    i.e., 
    $\big \| \ell(x;\xi) - \ell(y; \xi) \big \| \leq G \big \| x-y \big \|$.
    \item For every $\xi$, $\ell(\cdot;\xi)$ is continuously differentiable and $L$-smooth. i.e., $\big \| \nabla \ell(x;\xi) - \nabla \ell(y;\xi)  \big \| \leq L\big \| x-y \big \|$.
    \item The conjugate function $f^*$ is continuously differentiable and $M$-smooth.
    \item The objective function $\mathbb{E}_{\zeta \sim \mathbb{P}}\bigr[\Psi_{\zeta}(\cdot)\bigr]$ is bounded below.
    
\end{itemize}
\end{assumption}
\begin{remark}
Note that the loss function $\ell(x;\xi)$ is generally nonconvex. Regarding the smooth assumption on $f^*$ in the third item, some typical examples of $f$-divergence include the $\chi^2$-divergence, smoothed CVaR divergence \citep{jin2021nonconvex}, where their corresponding conjugate functions are given by $f^*(y)=-1+\frac{1}{4}(y+2)^2_{+}$ and $f^*(t)=\frac{1}{\alpha}\log(1-\alpha+\alpha\exp(t))$. \red{To analyze the dual formulation~\eqref{dual2} with KL-divergence, we need to adopt assumption $\sup_x \exp((\ell(x;\xi)-\lambda c(\zeta,\xi)-\eta)/\lambda\beta)<\infty$ holds almost-surely for every $\xi$ given $\zeta$.
%introduced by \citet{wang2023sinkhorn} including \textit{bounded loss assumption} $|\ell(x;\xi)|<\infty$ holds for any $x$, $\xi$, cost function $c(\zeta, \xi)$ is $\mathbb{P}\otimes\nu$-measurable satisfying $\nu\{ \xi:0\leq c(\zeta,\xi)<\infty\}=1$ for $\mathbb{P}$-almost every $\zeta$, and $|\eta|<\infty$.
%And \textit{integrability condition}, $\mathbb{E}_{\xi\sim\nu}[\exp(-c(\xi;\zeta)/\lambda\beta)]<\infty$ holds almost surely for every $\zeta\sim \mathbb{P}$. 
Under these assumptions, one can ensure that the convex conjugate dual $f^*(t)=\exp(t)-1$ is locally $M$-smooth within a bounded domain. We will later demonstrate in our ablation study (Appendix~\ref{Appendix: ablation study}) that the $f^*$ induced by the KL-divergence yields similar convergence behavior when optimizing dual formulation \eqref{dual2}, compared to choices of $f^*$ that satisfy the $M$-smoothness property, suggesting that this assumption does not limit practical applicability.}
\end{remark}
Since the dual problem takes a nested form, we need an efficient way to compute the gradient of the objective function.
The following lemma, proved by \cite{jin2021nonconvex}, provides an analytical formula for computing the exact gradient. (\red{See Lemma 2.6 in \citet{jin2021nonconvex} for proof details})
\begin{restatable}[Computation of $\nabla \Psi_{\zeta}(x)$ \citep{jin2021nonconvex}]{lemma}{jin}\label{jin}
Let Assumption \ref{assum1} hold and consider fixed $x$ and given $\zeta$. Then, the function $\Psi_{\zeta}(x)$ is differentiable and satisfies $\nabla \Psi_{\zeta}(x) = \nabla_1 \mathcal{L}_{\zeta}(x,\eta_x^*(\zeta))$, where $\allowdisplaybreaks \eta_x^*(\zeta) \in \arg\min_{\eta} \mathcal{L}_{\zeta}(x,\eta)$.
\label{thm: differentiability}
\end{restatable}

This lemma shows that, given the exact minimizer $\eta_{x}^*(\zeta)$ of the inner problem, one can directly evaluate the gradient of dual formulation~\eqref{dual2}. Notably, it eliminates the need to acquire second-order derivatives for computing $\nabla_1 \eta_{x}^*(\zeta)$, which are typically required under bilevel optimization framework \citep{bilevel_hessian,ghadimibilevelapproximation,hu2023contextual}. Motivated by Lemma~\ref{thm: differentiability}, we aim to develop conditions and algorithms to estimate $\eta_x^*(\zeta)$ with arbitrarily small error, thereby enabling the construction of an inexact gradient estimator for proposed Sinkhorn DRO dual formulation~\eqref{dual2} with an $\varepsilon$-level approximation error. \red{The following theorem shows that, for any fixed $x$ and $\zeta$, the approximation error in estimating $\nabla \Psi_{\zeta}(x)$ can be made arbitrarily small by querying an inner solution $\eta_{x}^{\tilde{d}}(\zeta)$ that is accurate on average. (See Appendix \ref{Proof: grad_error_bound thm} for proof details)}
%the threshold can further indicate $\varepsilon$-optimality condition for $\min_{\eta}\mathcal{L}_{\zeta}(x,\eta)$ and approximation error between $\nabla \Psi_{\zeta}(x)$ and $\nabla_1 \mathcal{L}_{\zeta}(x,\eta)$.

%\begin{restatable}{theorem}{approximationerror}\label{thm: grad_error_bound}
    % choose batch size $\tilde{B}= \Theta(\log(\frac{2}{\omega})R_2 \varepsilon^{-2})$ for some $\varepsilon>0$. Then, with probability at least $1-\omega$,  the stochastic approximation error of $v^{\tilde{B}}(\eta)$ can be bounded as 
    %\begin{align}
    %\big|\Lambda\big|=\big|v^{\tilde{B}}(\eta) - \nabla_{2} \mathcal{L}_{\zeta}(x,\eta) \big | \leq \frac{\varepsilon}{4}. \label{eq: sto_error}
    %\end{align}
    %Furthermore, suppose there exists a stochastic algorithm that solves \eqref{eq: inner problem} and produces a sequence $\{\eta_{x_t}^{d}(\zeta)\}_d$ with convergence guarantee
    %$
    %\mathbb{E}_d \big|\nabla_{2} \mathcal{L}_{\zeta}(x,\eta_{x_t}^{d}(\zeta)) \big| \leq \frac{\varepsilon}{2}
    %$, and set
    %$
    %\tilde{d}=\arg\min_d \big|v^{\tilde{B}}(\eta_{x_t}^{d}(\zeta)) \big|
    %$.
    %Then, with probability at least $1-\omega$, the full gradient $\nabla \Psi(x)$ and $\nabla_{2} \mathcal{L}_{\zeta}(x,\eta_{x}^{\tilde{d}}(\zeta))$ satisfy
    %\begin{align}
    %\bigl |\nabla_{2} \mathcal{L}_{\zeta}(x,\eta_{x}^{\tilde{d}}(\zeta))\bigl |\leq \varepsilon \quad \text{and}\bigl \|  \nabla \Psi_{\zeta}(x)\bigl \| \leq G\varepsilon + \bigl \|  \nabla_1 \mathcal{L}_{\zeta}(x,\eta_{x}^{\tilde{d}}(\zeta))\bigl \|, \forall  \zeta\sim \mathbb{P}.
    %\label{optimalrelation}
    %\end{align}
%\end{restatable}
\begin{restatable}[Gradient approximation error bound]{theorem}{approximationerror}\label{thm: grad_error_bound}
    \red{
    %Given target error $\varepsilon \in (0,1]$, set batch size $\tilde{B}= \Theta(R_2 \varepsilon^{-2})$. Then, for any $\eta$, the stochastic approximation error of $v^{\tilde{B}}(\eta)$ can be bounded as 
    %\begin{align}
    %\mathbb{E}_{\xi\sim\nu}\big|\Lambda\big|=\mathbb{E}_{\xi\sim\nu}\big|v^{\tilde{B}}(\eta) - \nabla_{2} \mathcal{L}_{\zeta}(x,\eta) \big | \leq \mathcal{O}(\varepsilon). \label{eq: sto_error}
    %\end{align}
    Consider a stochastic algorithm minimizing \eqref{eq: inner problem}. If the stochastic oracle outputs an $\eta_{x}^{\tilde{d}}(\zeta)$ converging to $\nabla_2\mathcal{L}_{\zeta}(x_t,\eta_x^*(\zeta))$ with scaled small target error $\tilde{\varepsilon}={\varepsilon}/{G}$, i.e.,
    \begin{align}
    \mathbb{E}_{\eta_{x}^{\tilde{d}}(\zeta)} \big|\nabla_{2} \mathcal{L}_{\zeta}(x,\eta_{x}^{\tilde{d}}(\zeta)) \big|^2 \leq \tilde{\varepsilon}^2,
    \label{eq: inner near-optima}
    \end{align}
    then the gradient $\nabla_{1} \mathcal{L}_{\zeta}(x,\eta_{x}^{\tilde{d}}(\zeta))$ approximates full gradient $\nabla \Psi(x)$ with error up to $\varepsilon$, i.e.,
    \begin{align}
    \bigl \|  \nabla \Psi_{\zeta}(x)- \mathbb{E}_{\eta_{x}^{\tilde{d}}(\zeta)}[  \nabla_1 \mathcal{L}_{\zeta}(x,\eta_{x}^{\tilde{d}}(\zeta))]\bigl \|^2 \leq \varepsilon^2, \forall  \zeta\sim \mathbb{P}.
    \label{optimalrelation}
    \end{align}}
\end{restatable}
\begin{remark}[Technical Novelty]
    The novelty of our proof lies in the usage of the monotonicity property of $(f^*)'$, which enables us to perform an equivalence transformation by moving the expectation $\mathbb{E}_{\xi \sim \nu}$ into the norm $\big \|\cdot \big\|$. Thanks to special structure of formulation~\eqref{dual2}, such operation swap doesn't change its value since each inner problem $\min_{\eta} \mathcal{L}_{\zeta}(x,\eta)$ depends on a fixed $\zeta$. In the proof, we also utilize the convexity of the conjugate function $f^*(\frac{\ell(x;\xi)-c(\zeta;\xi)-\eta}{\lambda \red{\beta}})$ in $\eta$, albeit the loss function $\ell(x;\xi)$ is generally nonconvex.
\end{remark}
\red{Note that Theorem~\ref{thm: grad_error_bound} requires an algorithm capable of generating an $\eta_{x}^{\tilde{d}}(\zeta)$ that satisfies condition \eqref{eq: inner near-optima}, which can be achieved by the vanilla SGD algorithm \citep{ghadimi2013stochastic} under mild assumptions. And the condition \eqref{eq: inner near-optima} cannot be further reduced to other forms, as the second-order condition \eqref{eq: inner near-optima} will be reused when estimating the second moment of the inexact gradient (See Lemma \ref{thm: boundedmoment3})}.

\red{Additionally, condition \eqref{optimalrelation} characterizes the relationship between the gradients $\nabla \Psi_{\zeta}(x)$ and $\nabla_1 \mathbb{E}_{\eta_x^{\tilde{d}}(\zeta)}[\mathcal{L}_{\zeta}(x,\eta_{x}^{\tilde{d}}(\zeta))]$ when the inner problem \eqref{eq: inner problem} is solved approximately by an inexact minimizer $\eta_{x}^{\tilde{d}}(\zeta)$. This observation motivates the design of a nested-type stochastic algorithm (detailed in the next subsection), in which the inner algorithm computes an inexact minimizer $\eta_{x}^{\tilde{d}}(\zeta)$ for $\mathcal{L}_{\zeta}(x,\eta)$, and the outer stochastic algorithm subsequently optimizes $\min_{x}\mathbb{E}_{\zeta\sim \mathbb{P}}[\mathcal{L}_{\zeta}(x,\eta)]$. This approach is justified by taking the expectation over $\zeta\sim\mathbb{P}$ and applying Jensen's inequality to \eqref{optimalrelation}, which further implies
\begin{align}
&\big\|\nabla_1 \mathbb{E}_{\zeta\sim \mathbb{P},\eta_{x}^{\tilde{d}}(\zeta)}\big[\mathcal{L}_{\zeta}(x,\eta_{x}^{\tilde{d}}(\zeta))\big] - \nabla \mathbb{E}_{\zeta\sim \mathbb{P}}\big[ \Psi_{\zeta}(x)\big]\big\|^2\nonumber\\
&\leq \mathbb{E}_{\zeta \sim \mathbb{P}}\big\|\nabla_1 \mathcal{L}_{\zeta}(x,\eta_{x}^{\tilde{d}}(\zeta)) - \nabla \Psi_{\zeta}(x)\big\|^2\leq \varepsilon^2.
\label{eq: induced from grad_error_bound}
\end{align}
Through condition \eqref{eq: induced from grad_error_bound}, we conclude that one can establish bias and variance guarantee with respect to the true gradient $\|\nabla \mathbb{E}_{\zeta\sim\mathbb{P}}[\Psi_{\zeta}(x)]\|$ by sub-sampling $\mathcal{O}(1)$-batches of $\zeta$.}

\section{Algorithm design and convergence analysis}

In this section, we propose a nested procedure composed of Algorithm~\ref{alg1} and Algorithm~\ref{alg2} to sequentially optimize $x$ and estimate $\eta_x^*(\zeta)$ via SGD-type algorithms. To facilitate the convergence analysis of these algorithms, we impose the following bounded variance assumptions on the loss function $\ell(x;\xi)$ and the cost metric $c(\zeta;\xi)$. Through the work, to account for randomness arising from high-dimensional data, we use the variance definition
$Var_{\varpi}(\rho(\varpi))=\mathbb{E}\bigr[\rho(\varpi)-\mathbb{E}\bigr[\rho(\varpi)\bigr]\bigr]^2$ in our assumption, where $\rho(\cdot): \mathbf{R}^d \rightarrow \mathbf{R}$ denotes a function mapping a random variable $\varpi \in \mathbf{R}^d$ to a scalar.
\begin{assumption}\label{assum2}
There exists $\sigma, \delta>0$ such that:
\begin{itemize}[topsep = 1pt, itemsep=1pt]
    \item For every $x$, the variance  of $\ell(x;\cdot)$ over $\xi$ is bounded by $\sigma^2$, i.e., $Var_{\xi}\big(\ell(x;\xi)\big)\leq \sigma^2$.
    \item For every $\zeta$, the variance of $c(\zeta,\cdot)$ over $\xi$ is bounded by $\delta^2$, i.e., $Var_{\xi}\big(c(\zeta,\xi)\big) \leq \delta^2$.
    \item For every $\xi$, the variance of $c(\cdot,\xi)$ over $\zeta$ is bounded by $\delta^2$, i.e., $Var_{\zeta}\big(c(\zeta,\xi)\big) \leq \delta^2$.
\end{itemize}
\end{assumption}
\subsection{Inexact Estimation of Inner Minimizer via SGD}
%The previous Section \ref{sec: SGD} solves the dual problem in \eqref{dual2} by assuming that the minimizer $\eta_x^*(\zeta)$ of the inner problem is given. 
%In this section, we propose a stochastic algorithm to obtain an approximate minimizer of the inner problem, and then develop a practical nested SGD algorithm to solve the entire dual problem without the need of exact minimizer $\eta_{x}^*(\zeta)$ assumed in lemma \ref{thm: differentiability}. 

Recall that the inner problem of the dual problem~\eqref{dual2} takes the following form. 
\begin{align}
    \allowdisplaybreaks
    \min_{\eta \in \mathbf{R}}& \mathcal{L}_{\zeta}(x,\eta)=\mathbb{E}_{\xi \sim \nu}\big[\lambda \red{\beta} f^*\big(\frac{\ell(x;\xi)-\lambda c(\zeta,\xi)-\eta}{\lambda \red{\beta}}\big)+\eta \big],
\end{align}
which is a stochastic optimization problem. Inspired by condition~\eqref{eq: inner near-optima} stated in Theorem~\ref{thm: grad_error_bound}, we utilize a SGD-type algorithm (Algorithm \ref{alg1}) for computing an inexact estimation of $\eta_{x}^*(\zeta)$. Specifically, 
for fixed $x$ and sample $\zeta$, we compute the mini-batch stochastic gradient estimator of $\nabla_2 \mathcal{L}_{\zeta}(x,\eta)$ at each iteration as follows
\begin{align}
    v^{\tilde{B}}_{x,\zeta}(\eta) =
    1-\frac{1}{\tilde{B}}\sum_{i=1}^{\tilde{B}}(f^*)^{\prime}\big(\frac{\ell(x;\xi_i) - c(\zeta,\xi_i)-\eta}{\lambda \red{\beta}}\big).
    \label{gradient2}
\end{align}
For simplicity, we simplify notation $v_{x,\zeta}^{\tilde{B}}(\eta)$ as $v^{\tilde{B}}(\eta)$ when the dependence on $x$ and $\zeta$ is clear from the context.
\begin{algorithm}[htbp]
\caption{SGD for estimating $\eta^*_{x}(\zeta)$}\label{alg1}
\begin{algorithmic}
\STATE \textbf{Input:} Initialization $\eta^0$; sample $\zeta$; number of iteration $D$; batch size $\tilde{B}$.
%\STATE Initialize $\widetilde{\eta}=\eta^0$.
    \WHILE{$d<D$}
        \STATE Draw samples $\left\{ \xi \right\}_{\tilde{B}}\sim \mathbb{\nu}$ with batch size $\tilde{B}$.
        %\State Evaluate $\mathbb{E}_{\zeta}L_{\zeta}(x_{t},\eta_{x_t}^d(\zeta))$ at $x_t$ and $\eta_{x_t}^d(\zeta)$.
        \STATE Compute $v(\eta_{x_t}^{d}(\zeta))$ via \eqref{gradient2}.
        %\STATE \red{Set learning rate $\alpha_d=\min\{, \frac{\varepsilon}{|v(\eta_{x_t}^{d}(\zeta))|}\}$}
        \STATE Update $\eta_{x_t}^{d+1}(\zeta) = \eta_{x_t}^{d}(\zeta) - \alpha_d v^{\tilde{B}}(\eta_{x_t}^{d}(\zeta))$. 
        %\IF{$ |v(\eta_{x_t}^{d+1}(\zeta)) |\leq | v(\eta_{x_t}^{d}(\zeta))|$}
        %    \STATE   Set $\widetilde{\eta}=\eta_{x_t}^{d+1}(\zeta)$.
        %\Else{ Keep $\widetilde{\eta}$ unchanged }
        %\ENDIF
        %\State Evaluate  $\mathbb{E}_{\zeta}L_{\zeta}(x_t,\eta_{x_t}^{d+1}(\zeta))$.
        %\If{$L(x_t,\eta_{x_t}^{d+1}(\zeta))<\mathbb{E}_{\zeta}L_{\zeta}(x_{t},\eta_{x_t}^d(\zeta))$}
        %\State Update $\eta_{x_t}^d(\zeta)$ by $\eta_{x_t}^{d+1}(\zeta)$
        %\EndIf
    \ENDWHILE
    \STATE {\bf Output:} \red{$\eta_{x}^{\tilde{d}}(\zeta)$, where $\tilde{d}$ uniformly sampled from $\{0, 1, 2, .... D-1 \}$}.
\end{algorithmic}
\end{algorithm}
%When the stochastic estimator $v^{\tilde{B}}(\eta)$ is used to approximate the full gradient
%$\nabla_2
%\mathcal{L}_{\zeta}(x,
%\eta^*_{x}(\zeta))$, it introduces inexactness to the gradient estimation as well as the convergence of the algorithm. Although the approximation error composes two parts, one is stochastic approximation error between $v^{\tilde{B}}(\eta)$ and $\nabla_2\mathcal{L}_{\zeta}(x,\eta)$, and another one is inexactness error between $\nabla_2\mathcal{L}_{\zeta}(x,\eta)$ and $\nabla_2\mathcal{L}_{\zeta}(x,\eta_{x}^*(\zeta))$,
%To quantify such approximation errors, we adopt the following assumption on the stochastic approximation error between $v^{\tilde{B}}(\eta)$ and $\nabla_2\mathcal{L}_{\zeta}(x,\eta)$.
%\begin{assumption}[Sub-Gaussian Noise]\label{assum3}
%For any $x$, $\zeta$ and $\eta$, the stochastic approximation error $\Lambda = v^{\tilde{B}}(\eta)-\nabla_2\mathcal{L}_{\zeta}(x,\eta)$ follows sub-Gaussian distribution with variance $R_2/\tilde{B}$ for some $R_2, \tilde{B} >0$, i.e.,  
$
%\mathbb{E}[\exp\big(s\Lambda\big)] \leq \exp (\frac{s^2R_2}{2\tilde{B}}),  \forall s\in \mathbf{R}.
$
%\end{assumption}
%In this section, we develop comprehensive convergence analysis of the proposed Nested SGD algorithm.

To analyze the convergence of the inner Algorithm \ref{alg1} based on Assumptions \ref{assum1} and \ref{assum2}, we first obtain the following expected smoothness property (\red{See Appendix \ref{Proof: inner smooth thm} for proof details})
\begin{restatable}[$K'$-smoothness of inner objective \eqref{eq: inner problem}]{lemma}{innersmooth}\label{thm: smooth2}
       Let Assumption \ref{assum1} hold and
       denote $\mathcal{L}_{\zeta,\xi}(x,\eta) =  \lambda \red{\beta} \mathbb{E}_{\xi\sim \nu}\bigr[ f^{*}(\frac{\ell(x;\xi) - c(\zeta,\xi) - \eta}{\lambda \red{\beta}})\bigr]+\eta$. Then, for any $\eta$ and $\eta'$, we have
    \begin{align}
    &\mathbb{E}_{\xi\sim \nu}\big \| \nabla_{2} \mathcal{L}_{\zeta,\xi}(x,\eta) - \nabla_{2} \mathcal{L}_{\zeta,\xi}(x, \eta') \big \|^2 \leq (K')^2\big \| \eta - \eta' \big \|^2, \nonumber
    \end{align}
where $K' = M(\lambda \red{\beta})^{-1}$.
\end{restatable}
\noindent We then obtain the upper bound estimate for second moment of $\nabla_2 \mathcal{L}_{\zeta}(x,\eta)$ as follows (\red{See Appendix \ref{proof: inner gradient bounded moment} for proof details}).
\begin{restatable}[Second moment bound for $\nabla_2\mathcal{L}_{\zeta,\xi}(x,\eta)$]{lemma}{secondmomentboundtwo}\label{thm: boundedmoment2}
    Let Assumption \ref{assum2} hold. Then, $v^{\tilde{B}}(\eta)$ is the unbiased estimator for $\nabla_2 \mathcal{L}_{\zeta}(x,\eta)$, and
    the second moment of $\nabla_2 \mathcal{L}_{\zeta,\xi}(x,\eta)$ satisfies
    \begin{align}
        \mathbb{E}_{\xi\sim\nu} \left \|  \nabla_2 \mathcal{L}_{\zeta,\xi}(x,\eta) \right \|^2 &\leq R_2+ \left \|\nabla_2 \mathcal{L}_{\zeta}(x,\eta) \right \|^2,
    \end{align}
    where $R_2= 2M^2(\lambda \red{\beta})^{-2}(\sigma^2+\lambda^2 \delta^2)$.
\end{restatable}
Based on $K'$-smooth and affine bounded second moment shown above, we establish the following convergence result of Algorithm \ref{alg1} (\red{See Appendix \ref{proof: thm innersgd} for proof details}).
\begin{restatable}[Convergence of Algorithm~\ref{alg1}]{corollary}{innersgd}\label{thm: innersgd}
    \red{Let Assumptions \ref{assum1} and \ref{assum2} hold, denote $\widehat{\Delta}=\sup_{\zeta}\{L_{\zeta}(x_t,\eta^0)\\-\Psi_{\zeta}(x_t)\}$. Apply Algorithm \ref{alg1} to solve the inner problem \eqref{eq: inner problem} with learning rate $\alpha_d =\min\{ \frac{1}{K'},\frac{\tilde{\varepsilon}^2}{R_2K'}\}$ and batch size $\tilde{B} = 1$, then Algorithm \ref{alg1} outputs an $\eta_{x}^{\tilde{d}}(\zeta)$ satisfying
    \begin{equation}
        \mathbb{E}_{\eta_{x}^{\tilde{d}}(\zeta)}\big | \nabla_{2} \mathcal{L}_{\zeta}(x,\eta_{x}^{\tilde{d}}(\zeta))\big|^2\leq \tilde{\varepsilon}^2.
    \end{equation}
    In particular, it takes $D=\mathcal{O}(\hat{\Delta}K'R_2{\tilde{\varepsilon}^{-4}})=\mathcal{O}(\hat{\Delta}K'R_2G^4{\varepsilon^{-4}})$ number of iterations to obtain an ${\tilde{\varepsilon}}$-stationary point, and the stochastic gradient oracle complexity is $\mathcal{O}(\widehat{\Delta} K'R_2G^4\varepsilon^{-4})$.}
\end{restatable}
\begin{remark}\label{remark: large batch size for inner problem}
    \red{Notice that solving the inner optimization problem~\eqref{eq: inner problem} to obtain an $\eta_{x}^{\tilde{d}}(\zeta)$ only requires solving a one-dimensional optimization problem, where evaluating its stochastic gradient does not require expensive tensor computation or backpropagation. Consequently, employing a large batch size can substantially reduce the iteration complexity without significantly increasing computational overhead. For example, by selecting  batch size as $\tilde{B}= \Theta(G^{-2}\tilde{\varepsilon}^{-2})=\Theta(\varepsilon^{-2})$ and setting the learning rate $\alpha_d = \min\{\frac{1}{K'},\frac{1}{2K'R_2G^2}\}$, Algorithm~\ref{alg1} generates an $\eta_{x}^{\tilde{d}}(\zeta)$ satisfying condition~\eqref{eq: inner near-optima} after $D \geq 8R_2\hat{\Delta}K'G^4{\varepsilon}^{-2}=\mathcal{O}(R_2\hat{\Delta}K'G^4\varepsilon^{-2})$ iterations.}
\end{remark}

\subsection{Nested SGD and its convergence analysis}
 %intuitively, with the help of Lemma \ref{thm: differentiability} and Theorem \ref{thm: grad_error_bound}, one can directly apply the standard SGD algorithm to solve the outer stochastic problem $\min_{x\in \mathbf{R}^d} \mathbb{E}_{\zeta \sim \mathbb{P}} \bigr[\Psi_{\zeta}(x)\bigr]$ when $\eta_x^*(\zeta)$ is available. 

%In this section, we consider such an ideal case to help develop an SGD algorithm (presented in Algorithm \ref{alg0}) to solve the outer problem. Specifically, at each iteration $t$, the algorithm samples one $\zeta$ and a batch of $\bigl\{\xi \bigl\}_{B}$ with batch size $B$ to construct a stochastic gradient estimator of $\nabla \mathbb{E}_{\zeta \sim \mathbb{P}}\bigr[\Psi_{\zeta}(x)\bigr]$, which takes the form
%\begin{align}
%    g_t^B= \frac{1}{B} \sum_{i=1}^{B} (f^*)'\Bigr(\frac{\ell(x;\xi_i)-c(\zeta,\xi_i)-\eta^*_{x_t}(\zeta)}{\lambda \red{\beta}}\Bigr) \nabla \ell(x_t;\xi_i).
%    \label{gradient1}
%\end{align}
%With these key lemmas, we obtain the following convergence rate of Algorithm \ref{alg0} by following
%the convergence analysis of the standard SGD developed in the existing literature \citep{ghadimi2013stochastic,arjevani2022lower}.

The previous section has shown that SGD-type algorithms can successfully generate an $\eta_x^{\tilde{d}}(\zeta)$ satisfying the conditions required by Theorem \ref{thm: grad_error_bound}. In this section, we introduce the nested SGD algorithm (Algorithm~\ref{alg2}) and establish its convergence guarantees. Specifically, at each iteration, Algorithm~\ref{alg2} samples a single $\zeta$ along with a mini-batch $\{\xi\}_B$, and obtain an inexact estimate of $\eta^*_{x}(\zeta)$ by calling Algorithm~\ref{alg1}. It is worth noting that the sampled $\xi$ from Algorithm~\ref{alg1} can be reused to save computation overhead. Algorithm~\ref{alg2} then evaluates a mini-batch gradient estimator of $\nabla \mathbb{E}_{\zeta}[\Psi(x_t)]$ as follows
\begin{align}
    \red{\hat{g}(x_t;\zeta;\xi_{B})= \frac{1}{B}\sum_{i=1}^{B} (f^*)'\big(\frac{\ell(x;\xi_i)-c(\zeta,\xi_i)-\eta_{x}^{\tilde{d}}(\zeta)}{\lambda \red{\beta}}\big) \nabla \ell(x_t;\xi_i)}.
    \label{gradient3}
\end{align}
\begin{remark}
    \red{In the stochastic gradient estimator \eqref{gradient3}, note that we only apply mini-batch sampling over $\xi$. This is because the inexact minimizer $\eta_x^{\tilde{d}}(\zeta)$ explicitly depends on $\zeta$ and thus can vary over different $\zeta$. We notice that \citet{wang2023sinkhorn} proposes RT-MLMC estimator, which is specifically designed to reduce the upper bound of the second moment using fewer samples. In our setting, their RT-MLMC gradient estimator can be rewritten as
    \begin{align}
        g^{\text{RT-MLMC}}(x_t)&=\frac{1}{n^{\circ}_q}\sum_{i=1}^{n^{\circ}_q} \frac{1}{\mathbb{P}(\iota=\iota_i)}A^{\iota_i}(x_t;\zeta^{\iota_i}),\text{ where }\nonumber \\
        A^{\iota_i}(x_t;\zeta^{\iota_i}) =& \hat{g}(x_t;\zeta^{\iota_i};\xi_{1:2^{l}}) - \frac{1}{2}\hat{g}(x_t;\zeta^{\iota_i};\xi_{1:2^{l-1}}) - \frac{1}{2}\hat{g}(x_t;\zeta^{\iota_i};\xi_{2^{l-1}+1:2^{l}}),
        \label{eq: rt-mlmc}
    \end{align}
    where $n_{q}^{\circ}$ denotes the randomly sampled levels $\iota_1,\dots,\iota_{n_q^{\circ}}$, where each level $\iota_i$ is sampled independently following the distribution $\mathbb{P}(\iota = \iota_i)=\frac{2^{-\iota_i}}{2-2^{-q}}$, for $l = 0,\dots,q$.
    To ensure convergence, \citet{wang2023sinkhorn} demonstrates that by choosing hyper-parameters $n^{\circ}_{q}=\mathcal{O}(1), q =\mathcal{O}(\log(\varepsilon^{-1}))$, RT-MLMC estimators ensures SGD achieving $\tilde{\mathcal{O}}(\varepsilon^{-2})$ near-optimal convergence when loss function $\ell(x;\xi)$ is convex and bounded. However, due to the nested structure of proposed Sinkhorn DRO dual formulation~\eqref{dual2}, we find that the RT-MLMC estimator is not well-suited for our setting, as \eqref{eq: rt-mlmc} requires storing $q = \mathcal{O}(\log(\varepsilon^{-1}))$ samples of $\zeta$. This, in turn, simultaneously increases the number of calls to Algorithm~\ref{alg1} per iteration within Algorithm~\ref{alg2} to obtain corresponding $\eta_{x_t}^{\tilde{d}}(\zeta^{\iota_i})$. The nested structure of the problem limits the effectiveness of variance reduction typically offered by the multilevel Monte Carlo method and fails to significantly improve the convergence order.
    Furthermore, it may even introduce additional challenges in sampling and computational efficiency, particularly when applying estimator \eqref{eq: rt-mlmc} within the backpropagation process of deep neural networks. As we show later in Theorem~\ref{thm: convergenestsgd}, our proposed Algorithm~\ref{alg2}, which employs gradient estimator \eqref{gradient3} and requires only a single sample of $\zeta$ and $\xi$ (namely $B=1$) per iteration, achieves a convergence rate of $T=\mathcal{O}(\varepsilon^{-4})$ in standard nonconvex setting.}
\end{remark}
At each iteration, Algorithm~\ref{alg2} then uses SGD-algorithm with respect to $\xi$ to update $x_t$ iteratively.
For simplicity, in the following article, we use the simplified notation $\hat{g}_t^{B}$ to denote expression~\eqref{gradient3} when the dependence on $x_t$, $\zeta$ and $\xi_{B}$ is clear from the context.
\begin{algorithm}[ht]
\caption{\red{Nested SGD for optimizing $x$}} \label{alg2}
\begin{algorithmic}
\STATE {\bf Input:} Initialization $x_0$; number of iteration $T$; learning rate $\gamma_t$; batch size $B$.
\WHILE{$t<T$}
%\STATE \red{In proof, we need to assume samples $\zeta$ and $\{\xi\}_B$ are drawn independently with each other.}
\STATE Draw samples $ \zeta \sim \mathbb{P}$ and $\big \{ \xi \big\}_B \sim \nu$ with batch size $B$ independently. 
%\COMMENT {\red{Notice that one can reuse sampled ${\xi}_{\tilde{B}}$ from algorithm \ref{alg1}.}}
\STATE Apply Algorithm \ref{alg1} to compute estimator $\eta_{x}^{\tilde{d}}(\zeta)$.
\STATE Compute $\hat{g}^B_t$ via \eqref{gradient3}.
\STATE  Update $ x_{t+1}=x_t-\gamma_t \hat{g}_t^{B}$.
\ENDWHILE
\STATE {\bf Output:} ${x}_{\tilde{t}}$, where $\tilde{t}$ sampled uniformly from $\big\{0, \ldots, {T-1} \big\}$.
\end{algorithmic}
\end{algorithm}

Next, we analyze the convergence of Algorithm \ref{alg2}. We first study the smoothness property of the objective function $\mathbb{E}_{\zeta \sim \mathbb{P}}\bigr[\Psi_{\zeta}(x)\bigr]$. We note that the bi-variate function $\mathbb{E}_{\zeta \sim \mathbb{P}}\bigr[ \mathcal{L}_{\zeta}(x,\eta) \bigr]$ has been shown to satisfy a generalized-smooth condition \citep{Zhang2020Why,chen2023generalizedsmooth}. However, if $\eta $ is chosen to be the minimizer $\eta_x^*(\zeta)$, we show that the objective function satisfies the following directional smoothness property \citep{directional} (\red{See Appendix \ref{proof: directional smooth thm} for proof details}).
\begin{restatable}[Directional Smoothness]{lemma}{outersmooth}
\label{thm:directionsmooth} Let Assumption \ref{assum1} hold. For any $x$ and $x'$, we have
\begin{align}
   &\mathbb{E}_{\zeta\sim \mathbb{P}}\big\| \nabla \Psi_{\zeta}(x)- \nabla_1 \mathcal{L}_{\zeta}(x',\eta_{x}^*(\zeta))   \big \|^2 \leq K^2\big \| x-x' \big \|^2,\text{ where } K=G^2(\lambda \red{\beta})^{-1}M+L.
\end{align}
\end{restatable}
Recall that $\eta_{x}^{\tilde{d}}(\zeta)$ is inexact estimation of $\eta_x^*(\zeta)$ obtained from Algorithm \ref{alg1}. To analyze convergence of Algorithm \ref{alg2}, we obtain the upper bound of the second moment for $\hat{g}_t^B$, which is stated in next lemma (\red{See Appendix \ref{proof: outer gradient bounded moment thm} for proof details}). 
%We note that this second moment bound is different from that in Lemma \ref{boundedmoment1}, which is for the mini-batch gradient estimator ${g}_t^B$ in \eqref{gradient1} that involves $\eta_x^*(\zeta)$.
%We also develop the following bound on the second moment of the stochastic gradient estimator shown in \eqref{gradient1}. 
%\begin{restatable}[Bounded Second Moment]{lemma}{secondmomentboundone}
%\label{boundedmoment1}
%    Let Assumption \ref{assum2} hold. For the mini-batch stochastic gradient estimator $g^B_t$ used in Algorithm \ref{alg0}, its second moment is bounded by
%    \begin{align}
%    \mathbb{E}_{\zeta \sim \mathbb{P},\xi_{\tilde{B}}\sim \nu} \big[\| g_t^B \|^2 \big]
%    \leq R_1+ \big \| \nabla_{1} \mathbb{E}_{\zeta \sim \mathbb{P}}\bigr[\mathcal{L}_{\zeta}(x_t,\eta^*_{x_t}(\zeta))\bigr]\big \|^2,
%    \label{bounded var1}
%    \end{align}
%    where $R_1=\frac{8G^2+20G^2M^2(\lambda \red{\beta})^{-2}\sigma^2+20G^2M^2\red{\beta}^{-2}\delta^2}{B} + G^2M^2\red{\beta}^{-2} \delta^2$. \\
%\end{restatable}
\begin{restatable}[Second moment bound of $\hat{g}_t^{B}$]{lemma}{secondmomentboundthree}\label{thm: boundedmoment3}
    \red{The second moment of the mini-batch gradient estimator $\hat{g}_t^B$ with batch size $B$ defined in \eqref{gradient3} is upper bounded as follows    
    %Denote biased mini-batch stochastic estimator of  $\nabla \mathbb{E}_{\zeta\sim \mathbb{P}}\big[\Psi_{\zeta}(x)\big]$ evaluated at  $x, \eta_{x}^{\tilde{d}}(\zeta)$ using \eqref{gradient1} as $\hat{g}^B_t$, its second moment satisfies
    \begin{align}
        \mathbb{E}_{\zeta \sim \mathbb{P},\eta_{x}^{\tilde{d}}(\zeta),\xi_{B}\sim \nu} \big \| \hat{g}_t^B \big \|^2\leq 
    \frac{R_1+10\varepsilon^2}{B}+ \big \| \nabla_{1} \mathbb{E}_{\zeta \sim \mathbb{P},\eta_{x}^{\tilde{d}}(\zeta)}\bigr[\mathcal{L}_{\zeta}(x_t,\eta_{x_t}^{\tilde{d}}(\zeta))\bigr]\big \|^2,
    \label{bounded var1-1}
    \end{align}
    where $ R_1={8G^2+24G^2M^2(\lambda \red{\beta})^{-2}\sigma^2+24G^2M^2\red{\beta}^{-2}\delta^2} $.}
\end{restatable}
Based on the above lemma, we obtain the following convergence result of Nested-SGD for minimizing $\mathbb{E}_{\zeta \sim \mathbb{P}}[\Psi_{\zeta}(x)]$ (\red{See Appendix \ref{proof: nested SGD convergence thm} for proof details}).

\begin{restatable}[Convergence of Algorithm \ref{alg2}]{theorem}{mainconvergence}\label{thm: convergenestsgd}
    \red{Let Assumptions \ref{assum1} and \ref{assum2} hold. 
    Denote $\Delta = \mathbb{E}_{\zeta \sim \mathbb{P}}\big[\Psi_{\zeta}(x_0)\big] - \inf_x \mathbb{E}_{\zeta \sim \mathbb{P}} \big[\Psi_{\zeta}(x)\big]$, %$\widehat{\Delta}=\max_{\zeta}\big\{L_{\zeta}(x_t,\eta^0)-\Psi_{\zeta}(x_t)\big\}$.
    %\begin{align}
        %\sum_{t=0}^{T-1}\frac{\gamma_t}{4}\big \| \nabla \mathbb{E}_{\zeta \sim \mathbb{P}}\bigr[\Psi_{\zeta}(x_t)\bigr] \big \|^2 \leq \Delta
        %+\frac{(4+B)KG^2\varepsilon^2}{B} \sum_{t=0}^{T-1}\gamma_t^2  
        %+ \frac{KR_1}{2}\sum_{t=0}^{T-1} \gamma_t^2 + G^2\varepsilon^2\sum_{t=0}^{T-1}\gamma_t.
    %\end{align} 
    apply Algorithm \ref{alg2} to solve the outer objective in \eqref{dual2} using a constant learning rate $\gamma_t = \gamma = \min\{\frac{1}{24K}, \frac{\varepsilon^2}{2KR_1}\}$, and set the batch size $B = 1$. At each iteration, query Algorithm \ref{alg1} to obtain an estimator $\eta^{\tilde{d}}_x(\zeta)$ for sampled $\zeta$. Then, the output $x_{\tilde{t}}$ of Algorithm \ref{alg2} satisfies
    \begin{align}
        \mathbb{E}_{x_{\tilde{t}}} &\big\| \nabla \mathbb{E}_{\zeta \sim \mathbb{P}}\bigr[\Psi_{\zeta}(x_{x_{\tilde{t}}})\bigr] \big\|^2 \leq 7\varepsilon^2, 
    \end{align}
after $T\geq \max\{96\Delta K\varepsilon^{-2},8\Delta KR_1\varepsilon^{-4}\}=\mathcal{O}(\Delta K R_{1}\varepsilon^{-4})$ number of iterations. 
%Moreover, for any $\varepsilon$ satisfying $\varepsilon = \min \{1,\frac{1}{\sqrt{3+4G^2}} \}$,
%to achieve $\mathbb{E}_{x_{\tilde{t}}} \big \| \nabla \mathbb{E}_{\zeta \sim \mathbb{P}} \bigr[\Psi_{\zeta}(x_{\tilde{t}})\bigr]\big \|\leq \tilde{\varepsilon}$, the stochastic gradient oracle complexity for algorithm \ref{alg2} is $T=\mathcal{O}(\Delta K R_{1} G^2\tilde{\varepsilon}^{-4})$.
}
\end{restatable}
This theorem indicates Algorithm \ref{alg2} takes $\mathcal{O}(\varepsilon^{-4})$ iterations to obtain an $\mathcal{O}(\varepsilon)$-stationary point, which implies the inexact minimizer $\eta_{x}^{\tilde{d}}(\zeta)$ has minor effects on the worst-case sample complexity of Algorithm \ref{alg2} for solving proposed Sinkhorn DRO dual formulation~\eqref{dual2}. Next, we summarize the overall sample and iteration complexity, as well as the per-iteration sample and memory complexities of Algorithms~\ref{alg1} and~\ref{alg2} (See Appendix~\ref{Appendix: discussion of complexity} for proof details).
\begin{restatable}[{Complexity Bound for $\min_x \mathbb{E}_{\zeta\sim\mathbb{P}}[\Psi_{\zeta}(x)]$}]{corollary}{totalcomplexity}\label{thm: total complexity bound}
   \red{Let Assumptions \ref{assum1} and \ref{assum2} hold. Then, the Nested-SGD algorithm (Algorithm~\ref{alg2}) returns an $\varepsilon$-stationary point with a total sample complexity of $\mathcal{O}(\varepsilon^{-8})$ for sampling $\xi$ and $\zeta$. Furthermore, by setting the batch sizes $B, \tilde{B} \sim \Theta(\varepsilon^{-2})$, the total iteration complexity becomes $T \times D \sim \mathcal{O}(\varepsilon^{-4})$. At each iteration, Algorithms~\ref{alg1} and~\ref{alg2} incur memory complexities of $\mathcal{O}(1)$ and $\mathcal{O}(d)$, respectively.}
\end{restatable}
% Compare with contextual bi-level papers.
Compared with existing stochastic bilevel methods—e.g., \citet{hu2023contextual,SJwrightnonconvexbilevel,bileveldoubleconvex,huangmomentumbilevel}—that study bilevel optimization framework, our nested-SGD Algorithm \ref{alg2} attains a moderate overall sample-complexity bound. However, the tighter bounds in those works highly rely on additional regularity assumptions that are incompatible with proposed dual formulation~\eqref{dual2}. Specifically, \citet{hu2023contextual} establish a sample complexity of $\tilde{\mathcal{O}}(\varepsilon^{-6})$ for mini-batch SGD under a strongly convex lower-level problem. \citet{huangmomentumbilevel,bilevelnonconvexPL} obtain sample complexity of $\tilde{\mathcal{O}}(\varepsilon^{-4})$ and $\tilde{\mathcal{O}}(\varepsilon^{-6})$, respectively, by assuming the lower-level objective satisfies the Polyak–{\L}ojasiewicz (PL) condition. Finally, \citet{bileveldoubleconvex,SJwrightnonconvexbilevel} report sample complexity of $\mathcal{O}(\varepsilon^{-4})$ and $\mathcal{O}(\varepsilon^{-7})$ for (i) a jointly convex upper–lower structure and (ii) a non-convex setting satisfying small-error proximal error-bound (EB) condition. Except \citet{hu2023contextual}, none of these studies incorporate an in-context variable in the lower-level problem. Later through numerical experiments in section \ref{sec: numerical experiments}, We show that Algorithm~\ref{alg2} enables proposed dual formulation~\eqref{dual2} to achieve performance comparable to that of \citet{wang2023sinkhorn}, while requiring only a small number of queries to $\zeta$ and $\xi$ under the same iteration budget $T$ and a small inner-loop depth $D$. Moreover, it remains scalable to large-scale problems, such as training robust deep neural networks under distribution shifts.

%We note that at each iteration, Algorithm \ref{alg2} needs to query Algorithm \ref{alg1} with stochastic gradient oracle complexity $\mathcal{O}(\hat{\Delta}K' R_2/B_2 \red{\beta}^{-4})$.
\subsection{Proof Sketch of Theorem \ref{thm: convergenestsgd}}

Based on directional smooth property stated in Lemma \ref{thm:directionsmooth}, we obtain the similar descent lemma as $L$-smooth function along the direction $x_{t+1}-x_t$ when $\eta_{x_t}^{*}(\zeta)$ is fixed (See \eqref{eq: directional smooth descent lemma} in Appendix \ref{pfdescentlemma}).
By replacing $x_{t+1}-x_t$ with biased gradient estimator, $\hat{g}_{t}$, one can obtain
\begin{align}
        \mathbb{E}_{\zeta\sim \mathbb{P}}\big[\Psi_{\zeta}(x_{t+1})\big]
        \leq \mathbb{E}_{\zeta\sim \mathbb{P}}\big[\Psi_{\zeta}(x_t)\big] -\gamma_t \langle \nabla \mathbb{E}_{\zeta\sim \mathbb{P}}\big[\Psi_{\zeta}(x_t)\big],\hat{g}_t \rangle + \frac{K\gamma_t^2}{2} \big \| \hat{g}_t \big \|^2. \nonumber
\end{align}
When $\hat{g}^B_{t}$ is used, it induces randomness from $x_t$, $\zeta$, $\big\{\xi\big\}_{\tilde{B}}$ and $\eta_{x}^{\tilde{d}}(\zeta)$. Taking expectation conditioned on $x_t$ over $\zeta,\eta_{x}^{\tilde{d}}(\zeta), \big\{\xi \big\}_{B}$ on both sides of above inequality, we have
\begin{align}
     &\mathbb{E}_{\zeta\sim \mathbb{P},\eta_{x_t}^{\tilde{d}}(\zeta),\xi_{B}\sim\nu} \big[ \Psi_{\zeta}(x_{t+1})|x_t \big]
       \leq \mathbb{E}_{\zeta\sim \mathbb{P}, \eta_{x_t}^{\tilde{d}}(\zeta),\xi_{B}\sim\nu} \big[ \Psi_{\zeta}(x_{t})| x_t \big] \nonumber\\
        &\qquad \quad - \mathbb{E}_{\zeta\sim \mathbb{P},\eta_{x_t}^{\tilde{d}}(\zeta),\xi_{B}\sim \nu} \big [  \langle \nabla \mathbb{E}_{\zeta\sim \mathbb{P}}\big[\Psi_{\zeta}(x_t)\big],\gamma_t \hat{g}^B_t \rangle | x_t \big ]  +
        \underbrace{\mathbb{E}_{\zeta\sim \mathbb{P}, \eta_{x_t}^{\tilde{d}}(\zeta),\xi_{B}\sim \nu} \big [ \frac{K\gamma_t^2}{2} \big \| \hat{g}^B_t \big \|^2 | x_t \big] }_{\textit{second moment}}\nonumber.
\end{align}
For upper bounding term \textit{``second moment"}, one can utilize \eqref{bounded var1-1} stated in Lemma \ref{thm: boundedmoment3}, which leads to 
\begin{align}
    &\mathbb{E}_{\zeta\sim \mathbb{P},\eta_{x_t}^{\tilde{d}}(\zeta),\xi_{B}\sim\nu} \big[ \Psi_{\zeta}(x_{t+1})|x_t \big]\leq
    \mathbb{E}_{\zeta\sim \mathbb{P},\eta_{x_t}^{\tilde{d}}(\zeta),\xi_{B}\sim \nu } \big[ \Psi_{\zeta}(x_{t})|x_t \big]+ \frac{K\gamma_t^2(R_1+10\varepsilon^2)}{2}\nonumber \\ 
    &\qquad \underbrace{-\mathbb{E}_{\zeta\sim \mathbb{P},\eta_{x_t}^{\tilde{d}}(\zeta), \xi_{B}\sim\nu} \big [ \left \langle \nabla \mathbb{E}_{\zeta\sim \mathbb{P}}\big[\Psi_{\zeta}(x_t)\big],\gamma_t \hat{g}^B_t\right \rangle | x_t \big ]}_{\textit{Term 1}}+\underbrace{\frac{K\gamma_t^2}{2} \big \| \nabla \mathbb{E}_{\zeta\sim \mathbb{P},\eta_{x_t}^{\tilde{d}}(\zeta)}\big[\mathcal{L}_{\zeta}(x_t,\eta_{x_t}^{\tilde{d}}(\zeta))\big] \big \|^2}_{\textit{Term 2}} \nonumber
\end{align}
For \textit{``Term 1"}, we expand this term as $-\mathbb{E}_{\zeta\sim \mathbb{P},\eta_{x_t}^{\tilde{d}}(\zeta), \xi_{B}\sim\nu} \big [ \gamma_t \langle \nabla \mathbb{E}_{\zeta\sim \mathbb{P}}[\Psi_{\zeta}(x_t)], \hat{g}^B_t - g_t^B+g_t^B \rangle | x_t \big ]$. Since $\nabla \mathbb{E}_{\zeta\sim\mathbb{P}}\Psi(x_t)$ is independent from randomness induced by $\xi_B$ and $\eta_{x_t}^{\tilde{d}}(\zeta)$, we move expectation over $\xi_B\sim\nu$ and $\eta_{x_t}^{\tilde{d}}(\zeta)$ into inner product. For $-\mathbb{E}_{\zeta\sim \mathbb{P}} \big [ \gamma_t \langle \nabla \mathbb{E}_{\zeta\sim \mathbb{P}}[\Psi_{\zeta}(x_t)], \mathbb{E}_{\eta_{x_t}^{\tilde{d}}(\zeta), \xi_{B}\sim\nu}[\hat{g}^B_t - g_t^B] \rangle | x_t \big ]$, we first apply Cauchy-Scharwarz inequality and then use condition \eqref{optimalrelation} stated in Theorem \ref{thm: grad_error_bound} to obtain upper bound $\gamma_t \varepsilon \mathbb{E}_{x_t}\big \|\nabla \mathbb{E}_{\zeta\sim \mathbb{P}}\big[\Psi_{\zeta}(x_t)\big] \big \|$. For $-\mathbb{E}_{\zeta\sim \mathbb{P},\eta_{x_t}^{\tilde{d}}(\zeta), \xi_{B}\sim\nu} \big [ \gamma_t \langle \nabla \mathbb{E}_{\zeta\sim \mathbb{P}}[\Psi_{\zeta}(x_t)],g_t^B \rangle | x_t \big ]$, it is equivalent to rewrite as $-\gamma_t\|\nabla \mathbb{E}_{\zeta\sim\mathbb{P}}[\Psi_{\zeta}(x_t)] \|^2$.

Similarly, For \textit{``Term2"}, by plus and minus an additional term $\nabla\mathbb{E}_{\zeta\sim\mathbb{P}}[\Psi_{\zeta}(x_t)]$ in squared norm, utilizing inequality $(a+b)^2\leq 2a^2+2b^2$ and condition \eqref{eq: induced from grad_error_bound} stated in Theorem \ref{thm: grad_error_bound},
we can upper bound ``Term2" by ${K\gamma_t^2}\varepsilon^2+K\gamma_t^2\|\nabla \mathbb{E}_{\zeta\sim\mathbb{P}}[\Psi_{\zeta}(x_t)] \|^2$. Re-arranging above inequality and applying fact $\gamma_t = \gamma=\min \{\frac{1}{24K},\frac{\varepsilon^2}{2KR_1} \}<\frac{1}{2K}$, we have
\begin{align}
     &\frac{\gamma_t}{2}\mathbb{E}_{x_t}\Big(\big \| \nabla \mathbb{E}_{\zeta\sim \mathbb{P}}\big[\Psi_{\zeta}(x_t)\big] \big \|^2 -2\varepsilon\big \| \nabla \mathbb{E}_{\zeta\sim \mathbb{P}}\big[\Psi_{\zeta}(x_t)\big] \big \|\Big) \nonumber\\
     &\leq \mathbb{E}_{x_t,\zeta\sim \mathbb{P},\eta_{x_t}^{\tilde{d}}(\zeta),\xi_{B}\sim\nu}\big[\Psi_{\zeta}(x_t) - \Psi_{\zeta}(x_{t+1})\big]  + \frac{KR_{1}\gamma_t^2}{2} + 6K\varepsilon^2\gamma_t^2. \nonumber
\end{align}
Summing above inequality from $0$ to $T-1$, applying $\gamma_t=\gamma = \min \{\frac{1}{24K},\frac{\varepsilon^2}{2KR_1} \}$ and\\ $ T\geq \max\{96\Delta K\varepsilon^{-2},8\Delta KR_1\varepsilon^{-4}\}$, we further conclude
\begin{align}
      \mathbb{E}_{x_{\tilde{t}}}\big[ \| \nabla \mathbb{E}_{\zeta \sim \mathbb{P}}\big[\Psi_{\zeta}(x_{\tilde{t}}) \big] \big\|^2\big] \nonumber
        \leq \frac{4\Delta}{T\gamma}+\frac{24K \varepsilon^2\gamma^2 T}{T\gamma} + \frac{2KR_{1}\gamma^2T}{T\gamma}+\frac{4  \varepsilon^2 T\gamma}{T\gamma }\leq 7\varepsilon^2.
\end{align}

%\begin{remark}
%    comparison with wang et al.
%\end{remark}
\section{Experiments}\label{sec: numerical experiments}

In this section, we evaluate the performance of proposed Sinkhorn DRO dual formulation~\eqref{dual2} in comparison to other baselines, including (constrained) Sinkhorn DRO dual formulation \eqref{baseline SinkhornDRO} from \citet{wang2023sinkhorn}, regularized $f$-divergence DRO from \citet{jin2021nonconvex, duchi2020learning}, and empirical risk minimization \citep{vapnikERM} under distribution shifts\footnote{Code available at: \url{https://github.com/ynyang94/GeneralSinkhorn-Regularized-DRO}}. To elaborate, we train logistic regression and LeNet \citep{LeNet} for classification tasks on real-world datasets, where we simulate distribution shifts by applying adversarial attacks on test dataset. \red{To align with assumption \ref{assum1}, for proposed Sinkhorn DRO dual formulation~\eqref{dual2}, we choose $f^*$ to be the conjugate dual of the $\chi^2$-divergence, which satisfies the $M$-smoothness property. We unify the training procedure across all formulations using vanilla SGD \citep{ghadimi2013stochastic}, implemented via PyTorch's autograd toolbox \citep{pytorchautograd}, \textit{without} employing additional heuristics such as random shuffling, learning rate scheduling, or weight decay. Test results are reported using the model parameters obtained at the last epoch, rather than the uniform averaging over iterates used in theoretical analysis. Additionally, for gradient estimator \eqref{gradient3}, we evaluate the expression using the sampled $\xi$ and $\eta_x^{\tilde{d}}(\zeta)$ obtained from Algorithm~\ref{alg1} to reduce computational overhead. To make a fair comparison between proposed Sinkhorn DRO dual formulation~\eqref{dual2} and dual formulation from \citet{wang2023sinkhorn}, we generate sample $\xi$ following same distribution. Due to page limitation, we refer readers to check Appendix \ref{Appendix: logistic regression experiments} and \ref{Appendix: details for LeNet} for more details on model initialization and algorithm hyper-parameter settings.
We also conduct linear regression experiment over synthetic data and ablation studies of proposed Sinkhorn DRO dual formulation~\eqref{dual2}, where we present the corresponding results in Appendix \ref{Appendix: linear regression experiments}, \ref{Appendix: ablation study} respectively. 
All the previously mentioned experiments were conducted on a PC computer with $32$GB memory, $24$ cores CPU running
Python 3.8.}
%The experiments were conducted on regression over synthetic data and classification over CIFAR10.

\subsection{Logistic Regression on CIFAR-10}

\begin{table*}[htbp]
\begin{center}
\resizebox{0.75\columnwidth}{!}{%
\begin{tabular}{ |c|c c c|c| }
 \hline
 FGSM (Logistic)  & Sinkhorn $\text{DRO}^1$ & f-DRO & ERM & Sinkhorn $\text{DRO}^2$   \\ 
 \hline 
 $\epsilon_{\text{FGSM}}=0.00$ & \textbf{77.07}\% & 77.00\% & 73.26\% & 75.27\%\\
 \hline
 $\epsilon_{\text{FGSM}}=0.01$ & 65.60\% & 63.80\% & 60.73\% & \textbf{67.13\%} \\
 \hline 
 $\epsilon_{\text{FGSM}}=0.02$ & 54.0\% & 46.07\% & 45.67\% & \textbf{58.13\%} \\ 
 \hline 
 $\ell_{\infty}$-PGD (Logistic) & Sinkhorn $\text{DRO}^1$ & f-DRO & ERM & Sinkhorn $\text{DRO}^2$ \\
 \hline 
 $\epsilon_{\text{PGD}}=0.01, \textit{iter}=20$ & 66.93\% & 66.0\% & 62.2\% & \textbf{68.2\%}\\
 \hline
  $\epsilon_{\text{PGD}}=0.02, \textit{iter}=20$ & 57.93\% & 53.27\% & 51.0\% & \textbf{60.47}\% \\
 \hline
 $\epsilon_{\text{PGD}}=0.03, \textit{iter}=20$ & 45.47\% & 37.93\% & 38.07\% & \textbf{50.60\%} \\
 \hline 
 
\end{tabular}}
\end{center}
\caption{ Test classification accuracy of logistic regression.}
\label{classification accuracy logistic}
\end{table*}

In this section, we apply the proposed Sinkhorn DRO dual formulation~\eqref{dual2} and other baselines on logistic regression over CIFAR-10 \citep{cifar10}, and test the classification accuracy on adversarial samples generated by the fast gradient sign method (FGSM) \citep{fgsm} and $\ell_{\infty}$-PGD \citep{pgdattack} attacks utilizing model parameters obtained at \textit{last} epoch. Table \ref{classification accuracy logistic} reports the test accuracy under different perturbation magnitudes, where Sinkhorn $\text{DRO}^1$ refers to formulations \eqref{baseline SinkhornDRO} from \citet{wang2023sinkhorn} and Sinkhorn $\text{DRO}^2$ refers to proposed Sinkhorn DRO dual formulation~\eqref{dual2}. The corresponding test loss curves are plotted in Figure \ref{logistic classification}. We found that when the test data are clean, the proposed Sinkhorn DRO dual formulation \eqref{dual2} achieves performance comparable to other baselines. However, as attack level increases, the model obtained via proposed Sinkhorn DRO formulation \eqref{dual2} is more robust than others, which demonstrates the advantage and effectiveness of Sinkhorn DRO and proposed Nested SGD Algorithm~\ref{alg2}.

\subsection{LeNet Classification on MNIST}

\begin{table*}[htbp]
\begin{center}
\resizebox{0.75\columnwidth}{!}{%
\begin{tabular}{ |c|c c c |c| } 
 \hline
 FGSM (LeNet) & Sinkhorn $\text{DRO}^1$ & f-DRO & ERM & Sinkhorn $\text{DRO}^2$  \\
 \hline
  $\epsilon_{\text{FGSM}}=0.00$ & 95.89\% & 94.60\% & 95.50\% & \textbf{96.80}\% \\
 \hline
 $\epsilon_{\text{FGSM}}=0.02$ & 83.10\% & 60.30\% & 79.20\% & \textbf{89.80}\% \\
 \hline
 $\epsilon_{\text{FGSM}}=0.05$ & 52.50\% & 16.90\% & 40.0\% &  \textbf{65.60}\% \\
 \hline
 PGD $\ell_{\infty}$ (LeNet) & Sinkhorn $\text{DRO}^1$ & f-DRO & ERM & Sinkhorn $\text{DRO}^2$ \\
 \hline
 $\epsilon_{\text{PGD}_\infty}=0.01, \textit{iter}=20$ & 91.50\% & 84.0\% & 90.80\% & \textbf{94.30}\%\\
 \hline
 $\epsilon_{\text{PGD}_\infty}=0.02, \textit{iter}=20$ & 85.10\% & {62.30}\% & 81.20\% & \textbf{91.30}\%\\
 \hline
 $\epsilon_{\text{PGD}_\infty}=0.05, \textit{iter}=20$ & 46.60\% & 7.90\% & 34.30\% & \textbf{66.20}\%\\
 \hline
 PGD $\ell_{2}$ (LeNet) & Sinkhorn $\text{DRO}^1$ & f-DRO & ERM  & Sinkhorn $\text{DRO}^2$\\ 
 \hline
 $\epsilon_{\text{PGD}_2}=0.5,\textit{iter}=30$ & 87.10\% & 71.39\% & 85.30\% & \textbf{92.50}\% \\
 \hline
 $\epsilon_{\text{PGD}_2}=0.8, \textit{iter}=30$ & 74.20\% & 46.20\% & 73.50\% & \textbf{87.10}\% \\
 \hline
 $\epsilon_{\text{PGD}_2}=1.2,\textit{iter}=30$ & 55.50\% & 18.90\% & 51.40\% & \textbf{74.00}\% \\
 \hline
 MIM (LeNet) & Sinkhorn $\text{DRO}^1$ & f-DRO & ERM & Sinkhorn $\text{DRO}^2$ \\
 \hline
 $\epsilon_{\text{MIM}}=0.01, \textit{iter}=30$ & \textbf{83.20\%} & 42.50\% & 20.20\% & 80.60\% \\
 \hline
 $\epsilon_{\text{MIM}}=0.02, \textit{iter}=30$ & \textbf{76.60\%} & 31.90\% & 15.20\% & 76.20\%\\
 \hline
 $\epsilon_{\text{MIM}}=0.05, \textit{iter}=30$ & 52.10\% & 10.60\% & 4.80\% & \textbf{57.99}\% \\
 \hline
\end{tabular}}
\end{center}
\caption{Test classification accuracy of LeNet under different adversarial attack methods.}
\label{classification accuracy}
\end{table*}

In this section, we apply proposed Sinkhorn DRO dual formulation~\eqref{dual2} and other baselines to train a LeNet \citep{LeNet} over MNIST \citep{mnist}, and test the classification accuracy on adversarial samples generated by FGSM \citep{fgsm}, $\ell_{\infty}, \ell_{2}$-PGD \citep{pgdattack} and momentum iterative method (MIM) \citep{mim} attacks utilizing model parameters obtained at \textit{last} epoch. Table \ref{classification accuracy} reports the test accuracies under different perturbation magnitudes and the corresponding test loss curves are plotted in Figure \ref{LeNet test loss 1}, \ref{LeNet test loss 2}, \ref{LeNet test loss 3} and \ref{LeNet test loss 4}. Specially, we found $f$-DRO is vulnerable against FGSM and PGD attacks. This might be due to the generalized-smoothness property of $f$-DRO objective \citep{jin2021nonconvex,chen2023generalizedsmooth}, which makes it hard to optimize using vanilla SGD. As for the proposed Sinkhorn DRO dual formulation \eqref{dual2}, we find that it achieves higher classification accuracy than the Sinkhorn DRO dual formulation \eqref{baseline SinkhornDRO} from \citet{wang2023sinkhorn} across most attack magnitudes, and exhibits a smaller drop in accuracy as the attack strength increases. This demonstrates the effectiveness of our proposed Sinkhorn DRO dual formulation \eqref{dual2} and supports the validity of our algorithmic analysis.

\section{Conclusion}
In this paper, we investigate generalized Sinkhorn distance-regularized distributionally robust optimization. By deriving a new dual formulation with strong duality guarantee, we show that the resultant Sinkhorn DRO problem has nested stochastic optimization structure, which enables us to design a Nested SGD algorithm with convergence guarantee under mild assumptions. Numerical studies demonstrate that our Sinkhorn DRO formulation is applicable to large-scale problems and can attain stronger robustness against distribution shifts through multiple datasets and tasks.
%\section*{Impact Statement}
%This paper presents work whose goal is to advance the field of robust machine learning. We hope our proposed Sinkhorn DRO formulation can inspire researchers rethinking about the underlying mechanisms of adversarial training, and encourage community to explore new algorithms and applications. There are potential societal applications 
%of our work, specially, we hope DRO framework could benefit underrepresented people in social equity research and provide framework of generating robust decisions for any potential social extreme events.

% In the unusual situation where you want a paper to appear in the
% references without citing it in the main text, use \nocite

\section{Acknowledgements}
% List of funding resources, i.e., Funding in direct support of this work: NSF grant XXX, GPUs donated by YYY, scholarship by Company ZZZ.
Yufeng Yang and Yi Zhou’s work is supported by the National Science Foundation under grants DMS-2134223, ECCS-2237830. Zhaosong Lu's work is partially supported by the Office of Naval Research under grant N00014-24-1-2702, the Air Force Office of Scientific Research under grant FA9550-24-1-0343,  and the National Science Foundation under grant IIS-2211491.

%%%%%%%%%%%%%%%%%%%%%%%%%%%%%%%%%%%%%%%%%%%%%%%%%%%%%%%%%%%%%%%%%%%%%%%%%%%%%%%
%%%%%%%%%%%%%%%%%%%%%%%%%%%%%%%%%%%%%%%%%%%%%%%%%%%%%%%%%%%%%%%%%%%%%%%%%%%%%%%
% APPENDIX
%%%%%%%%%%%%%%%%%%%%%%%%%%%%%%%%%%%%%%%%%%%%%%%%%%%%%%%%%%%%%%%%%%%%%%%%%%%%%%%
%%%%%%%%%%%%%%%%%%%%%%%%%%%%%%%%%%%%%%%%%%%%%%%%%%%%%%%%%%%%%%%%%%%%%%%%%%%%%%%

\newpage
\bibliography{main}

\newpage

\appendix
\section*{Appendix}
\startcontents[app]        % start collecting appendix entries

\addcontentsline{toc}{section}{Table of Contents}

\noindent\rule{\textwidth}{0.2pt}\par\vspace{0.4\baselineskip}
% the mini-TOC itself
\vspace{-10pt}
\printcontents[app]{l}{1}{\setcounter{tocdepth}{2}}
\vspace{-5pt}
% bottom rule
\vspace{0.3\baselineskip}\noindent\rule{\textwidth}{0.2pt}

\renewcommand{\thelemma}{A.\arabic{lemma}} % or \thetheorem if shared counter
\setcounter{theorem}{0}

\section{Regression over synthetic data}\label{Appendix: linear regression experiments}

Through this section, we use synthetic training and test data. We generate the input samples with $3k$ measurements and dimension $d=10$ from a multivariate normal distribution, where the mean vector and covariance matrix are $0.5\mathbf{e}$ and $0.1\mathbf{I}$, respectively. Ground-truth model parameters $x^*$ are sampled from $\mathcal{N}(0,9e^{-2})$, and the corresponding output data $\zeta_{\text{output}}\in \mathbf{R}^{3k\times 10}$ follows the rule $\zeta_{\text{output}}= \zeta_{\text{train}} \cdot x^*+\epsilon_{\text{noise}}$, where $\epsilon_{\text{noise}} \sim \mathcal{N}(0,2.5e^{-2})$. For synthetic test data, we generate $500$ measurements following the same way as training data, we normalize all the data, apply Gaussian and Laplacian attack over test data to compare their performances. For primal parameters, we initialize them as $x_0\sim \mathcal{N}(x^*+5e^{-2},1e^{-2})$. For dual variable $\eta$ used in proposed Sinkhorn DRO dual formulation \eqref{dual2} and $f$-DRO, we initialize them as $\eta_0\in \mathbf{R}^{3k}\sim \mathcal{N}(5,2.25)$, $\eta_0=0.8$ respectively. 
%For Sinkhorn distance hyper-parameter $\beta$, we set it as $1.0$ for both our proposed formulation \eqref{dual2} and baseline Sinkhorn DRO formulation \eqref{baseline SinkhornDRO}. For regularization parameter $\lambda$, we set it as $0.8$ for all formulations. 

We fine-tuned the hyper-parameters for all models. The detailed formulation and algorithm settings are as follows. The loss function $\ell(.)$ is set to quadratic loss through all formulations. For proposed Sinkhorn DRO dual formulation \eqref{dual2} and Sinkhorn DRO dual formulation \eqref{baseline SinkhornDRO} from \citet{wang2023sinkhorn}, we set the reference measure $\nu$ as Gaussian measure following $\xi_{\text{train}}\sim\mathcal{N}(\zeta_{\text{train}},4e^{-2})$ for every $\zeta$. The cost metric and $f^*$ are set as $c(\zeta,\xi)=||\zeta-\xi||_2^2$ and $f^*(t)=\frac{1}{4}(t+2)^2_{+}-1$, which corresponds to the dual function of $\chi^2$-divergence. For regularization parameter $\lambda$ and $\red{\beta}$ used in generalized Sinkhorn distance (see Definition \ref{gSinkhorn}), we set them as $\lambda = 0.8, \red{\beta}=1.0$. We trained all formulations using vanilla stochastic gradient descent (SGD) \citep{ghadimi2013stochastic}. For $f$-DRO and ERM, we set the learning rates as $5e^{-4}$, $1e^{-3}$ respectively, and we optimize primal and dual variable of $f$-DRO in parallel following conclusion drawn from \citet{jin2021nonconvex}. For Sinkhorn DRO formulation \eqref{baseline SinkhornDRO} from \citet{wang2023sinkhorn}, we set the learning rate as $1e^{-3}$ and subsample $\xi_{\tilde{B}}$ with $\tilde{B}=8$ at each iteration. For proposed Sinkhorn DRO formulation \eqref{dual2}, we subsample $\xi_{\tilde{B}}$ with $\tilde{B}=8$, run algorithm \ref{alg1} with $5$ steps to minimize \eqref{eq: inner problem} at each iteration, and set the learning rates for algorithm \ref{alg2} and \ref{alg1} as $5e^{-2}$, $8e^{-2}$ respectively. For all algorithms (except inner SGD algorithm \ref{alg1}), we set batch size for sub-sampled $\zeta$ as $32$ and ran SGD for $80$ epochs. Figure \ref{LinearRegression} plots the training and test loss according to recorded checkpoints every $8$ epochs.

We evaluate our proposed Sinkhorn DRO dual formulation~\eqref{dual2}, $f$-DRO and ERM on regression task over synthetic data. We plot test (quadratic) loss on the test data in log-log scale at Figure \ref{LinearRegression}, where the left shows the loss value obtained from the test data under gaussian attack; the right shows loss value obtained from the test data under Laplacian attack. $\textit{SDRO}1$ refers to the Sinkhorn DRO dual formulation from \citet{wang2023sinkhorn} and $\textit{SDRO}2$ refers to proposed Sinkhorn DRO dual formulation \eqref{dual2}. Different marks represent different perturbation magnitudes $\mathrm{p}$. As we can see, our proposed Sinkhorn DRO dual formulation \eqref{dual2} attains comparable performance under distribution shifts with Sinkhorn DRO dual formulation from \citep{wang2023sinkhorn}. Additionally, we found $f$-DRO difficult to optimize using SGD with sample-average approximation. This observation is consistent with the findings of \citet{jin2021nonconvex, chen2023generalizedsmooth}, where the $f$-DRO formulation satisfies a generalized smoothness condition and requires advanced optimization algorithms \citep{zhangrevistingDRO, cutkoskynsgdm} to ensure convergence. 
\begin{figure}[!htbp]
    \centering
    \subfigure[Gaussian attack]{%
    \includegraphics[width=0.45\textwidth]{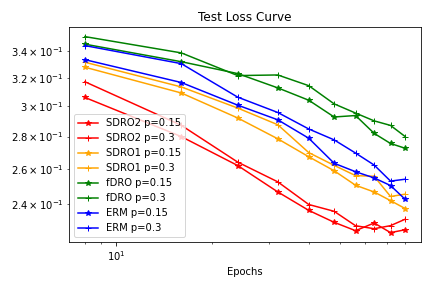}%
    \label{fig:gaussian}
}
    %\hspace{0.05\textwidth}
    \hfill
    \subfigure[Laplacian attack]{
        \includegraphics[width=0.45\linewidth]{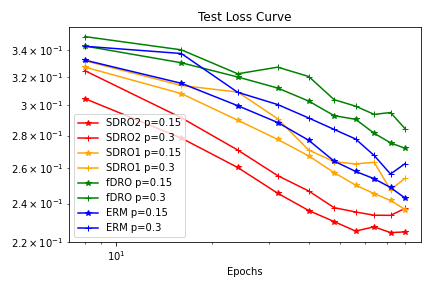}
   }

    \caption{Test performance of linear regression under Gaussian and Laplacian attack. }
    \label{LinearRegression}
\end{figure}

\section{Detailed settings for training logistic regression over compressed CIFAR-10}\label{Appendix: logistic regression experiments}

Through this section, we use CIFAR-10 \citep{cifar10} as our train and test data. We preprocess the dataset by resizing images, normalizing and utilizing pre-trained ResNet-50  \citep{ResNet} over ImageNet \citep{imagenet} to compress each image into a vector with feature dimension $d=250$. For test data, we subsampled $1500$ samples from compressed CIFAR-10 test data, generating adversarial examples utilizing model parameters obtained at last epoch to evaluate test performances through all methods. For proposed Sinkhorn DRO dual formulation~\eqref{dual2}, we initialize the primal and dual parameters as $x_0\sim\mathcal{N}(0,4e^{-2})$, $\eta_0\in \mathbf{R}^{50k}\sim \mathcal{N}(1,1e^{-2})$ respectively. For Sinkhorn DRO dual formulation from \citet{wang2023sinkhorn}, $f$-DRO and ERM, we adopt same initialization for primal parameters and set the dual variable $\eta$ for $f$-DRO as $\eta_0=1.5$.

\begin{figure*}[!htbp]
    \centering
    % First row of images
    \subfigure[$\epsilon_{\text{FGSM}}=0.00$]{
        \includegraphics[width=0.31\textwidth]{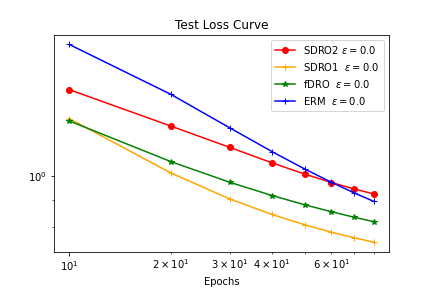}
    }
    \hfill
    \subfigure[$\epsilon_{\text{FGSM}}=0.01$]{
        \includegraphics[width=0.31\textwidth]{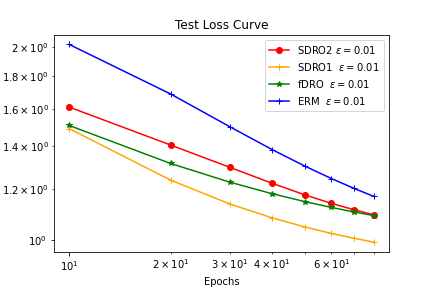}
    }
    \hfill
    \subfigure[$\epsilon_{\text{FGSM}}=0.02$]{
    
        \includegraphics[width=0.31\textwidth]{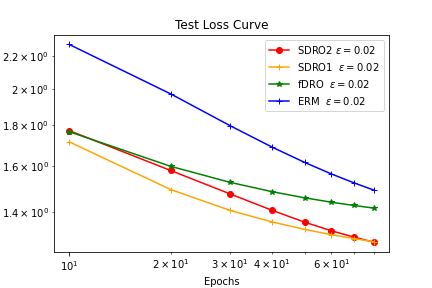}
    }
    %\vspace{0.1cm} % Adjust vertical spacing between rows if needed
    
    % Second row of images
    %\centering
    % First row of images
    \vspace{0.5cm} %
    \subfigure[$\epsilon_{\text{PGD}}=0.01$]{
        
        \includegraphics[width=0.31\textwidth]{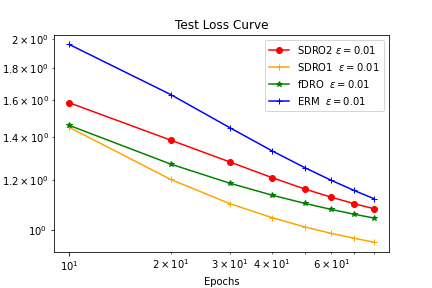}
    }
    \hfill
    \subfigure[$\epsilon_{\text{PGD}}=0.02$]{
        \includegraphics[width=0.31\textwidth]{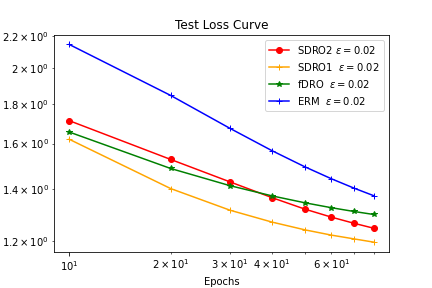}
    }
    \hfill
    \subfigure[$\epsilon_{\text{PGD}}=0.03$]{
        \includegraphics[width=0.31\textwidth]{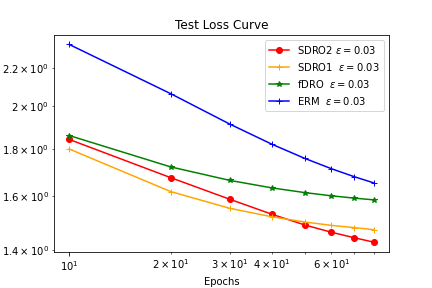}
    }
    \caption{Logistic Regression Test Loss under FGSM (top) and PGD (bottom) attack}
    \label{logistic classification}
\end{figure*} 

We fine-tuned all hyper-parameters for each baseline methods. The detailed formulation and algorithm setting are as follows. The loss function $\ell(.)$ is set to cross-entropy (CE) loss through all formulations. For proposed Sinkhorn DRO dual formulation~\eqref{dual2} and formulation \eqref{baseline SinkhornDRO} from \cite{wang2023sinkhorn}, we set reference measure $\nu$ as Gaussian measure following $\xi_{\text{train}}\sim \mathcal{N}(\zeta_{\text{train}},4e^{-2})$ and keep $c(\cdot,\cdot)$, $f^*(\cdot)$ to be $\ell_2$-norm and conjugate dual of $\chi^2$-divergence.
We trained all formulations using vanilla stochastic gradient descent (SGD) \citep{ghadimi2013stochastic}. For $f$-DRO and ERM, we set their learning rates as $8e^{-2}$, $3e^{-2}$ respectively, and we optimize the primal and dual variable of $f$-DRO in parallel following conclusion drawn from \citet{jin2021nonconvex}. For Sinkhorn DRO dual formulation \eqref{baseline SinkhornDRO} from \citet{wang2023sinkhorn}, we subsample $\xi_{\tilde{B}}$ with $\tilde{B}=2$ and set learning rate as $8e^{-2}$. For proposed Sinkhorn DRO dual formulation~\eqref{dual2}, we also subsample $\xi_{\tilde{B}}$ with $\tilde{B}=2$, run algorithm \ref{alg1} with $5$ steps to minimize inner objective \eqref{eq: inner problem} at each iteration and we set the learning rates for algorithm \ref{alg2} and \ref{alg1} as $8e^{-2}$,$1e^{-1}$ respectively. For all SGD algorithms (except inner Algorithm \ref{alg1}), we set batch size for sub-sampled $\zeta$ as $64$, ran algorithms for $80$ epochs and plot the test CE loss every $10$ epochs. Figure \ref{logistic classification} plots the test loss in log-log scale, where the first row represents the CE loss of test data under FGSM attack and the second row plots the CE loss of test data under $\ell_{\infty}$-PGD \citep{pgdattack}, where we set $\ell_{\infty}$-PGD attack iterations to be $20$ and step size $\alpha= \red{\epsilon_{\text{PGD}}}/4$ through all perturbation magnitudes.

Combined test accuracy reported in Table \ref{classification accuracy}, although every model's accuracy is affected when varying perturbation, we found our proposed Sinkhorn DRO dual formulation~\eqref{dual2} achieves highest test classification accuracies across most scenarios, and exhibits smallest accuracy drop the attack strength increases, which demonstrates the effectiveness of proposed Sinkhorn formulation \eqref{dual2} and Nested-SGD algorithm (Algorithm~\ref{alg2}).

\section{Detailed settings for training LeNet over MNIST data}\label{Appendix: details for LeNet}
Through this section, we use MNIST \citep{mnist} as our train and test data. We preprocess them by resizing images into $ 32\times 32$, normalizing them with mean and standard derivation, all equal to $0.5$. For the test data, we randomly subsampled $1000$ samples from MNIST test data, and generate adversarial test samples utilizing model parameters obtained at the last epoch. For proposed Sinkhorn DRO dual formulation \eqref{dual2} and other baseline methods, we initialize the primal parameters using kaiming initialization \citep{kaiminginitialization}. For the dual parameters utilized in proposed Sinkhorn DRO dual formulation \eqref{dual2} and $f$-DRO, we initialize them as $\eta_0\in \mathbf{R}^{60k}\sim \mathcal{N}(0.5,1e^{-2})$ and $\eta_0=1.0$ respectively.
\begin{figure*}[!htbp]
    \centering
    \subfigure[$\epsilon_{\text{FGSM}}=0.00$]{%
        \includegraphics[width=0.32\textwidth]{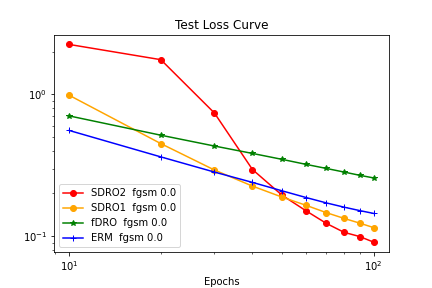}
    }
    \hfill
    \subfigure[$\epsilon_{\text{FGSM}}=0.02$]{%
        \includegraphics[width=0.32\textwidth]{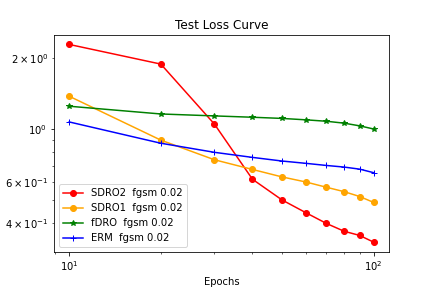}
    }
    \hfill
    \subfigure[$\epsilon_{\text{FGSM}}=0.05$]{%
        \includegraphics[width=0.32\textwidth]{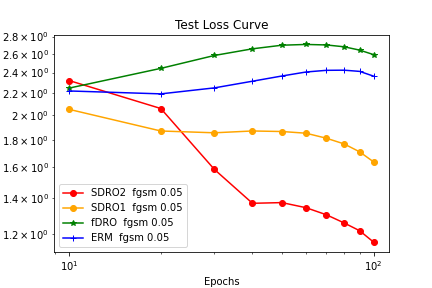}
    }

    \caption{Test loss curves of LeNet under FGSM attacks with different perturbation levels.}
    \label{LeNet test loss 1}
\end{figure*}
\begin{figure*}[!htbp]
    \centering
    \subfigure[$\epsilon_{\text{PGD}}=0.01$]{%
        \includegraphics[width=0.32\textwidth]{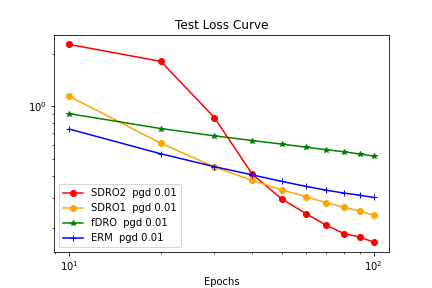}
    }
    \hfill
    \subfigure[$\epsilon_{\text{PGD}}=0.02$]{%
        \includegraphics[width=0.32\textwidth]{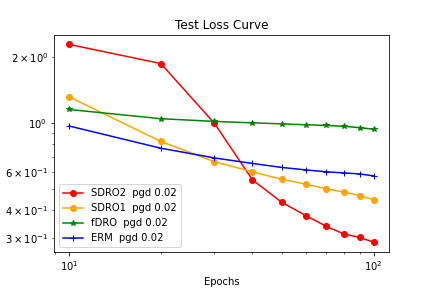}
    }
    \hfill
    \subfigure[$\epsilon_{\text{PGD}}=0.05$]{%
        \includegraphics[width=0.32\textwidth]{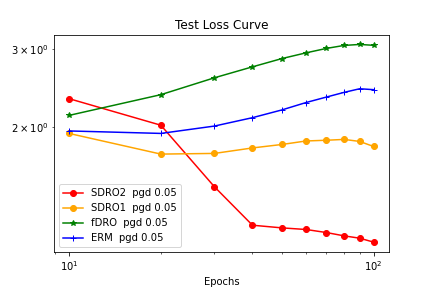}
    }

    \caption{Test loss curves of LeNet under $\ell_\infty$-PGD attacks with different perturbation levels.}
    \label{LeNet test loss 2}
\end{figure*}

We fine-tuned all hyper-parameters for each method. The detailed formulation and algorithm settings are as follows. The loss function $\ell(.)$ is set to cross-entropy (CE) loss through all formulations. For proposed Sinkhorn DRO dual formulation \eqref{dual2} and formulation \eqref{baseline SinkhornDRO} from \cite{wang2023sinkhorn}, we set their reference measure $\nu$ as Gaussian measure following $\xi_{\text{train}}\sim \mathcal{N}(\zeta_{\text{train}},2.25e^{-2})$ and keep $c(\cdot,\cdot)$, $f^*(\cdot)$ to be $\ell_2$-norm, conjugate dual of $\chi^2$-divergence. For regularization parameter $\lambda$, $\red{\beta}$ used in objective formulation and Sinkhorn distance, we set them as $\lambda = 0.5$, $\red{\beta} = 0.8$ respectively. We trained all formulations using vanilla stochastic gradient descent (SGD) \citep{ghadimi2013stochastic}. For $f$-DRO and ERM, we set their learning rate to be $1e^{-3}$, optimize primal and dual variables of $f$ in parallel according to conclusion drawn from \citet{jin2021nonconvex}. For Sinkhorn DRO dual formulation \eqref{baseline SinkhornDRO} from \citet{wang2023sinkhorn}, we subsample $\xi_{\tilde{B}}$ with $\tilde{B}=5$ and set learning rate as $1e^{-3}$.
For proposed Sinkhorn DRO dual formulation \eqref{dual2}, we subsample $\xi_{\tilde{B}}$ with $\tilde{B}=4$, run algorithm \ref{alg1} with $20$ steps for minimizing inner objective \eqref{eq: inner problem}, and set the learning rates for algorithm \ref{alg2} and \ref{alg1} as $5e^{-3}$ and$1e^{-1}$ respectively. We set the batch size for sub-sampled $\zeta$ as $128$ and ran SGD for $100$ epochs and record the loss every $10$ epochs. 

\begin{figure*}[!htbp]
    \centering
    \subfigure[$\epsilon_{\text{PGD}2}=0.5$]{%
        \includegraphics[width=0.32\textwidth]{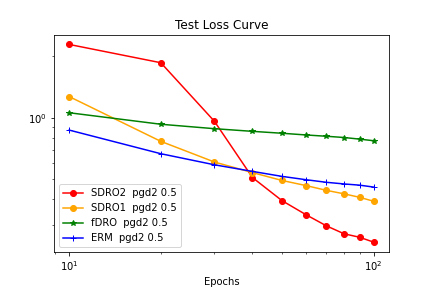}
    }
    \hfill
    \subfigure[$\epsilon_{\text{PGD}2}=0.8$]{%
        \includegraphics[width=0.32\textwidth]{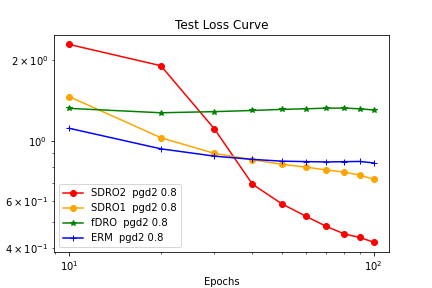}
    }
    \hfill
    \subfigure[$\epsilon_{\text{PGD}2}=1.2$]{%
        \includegraphics[width=0.32\textwidth]{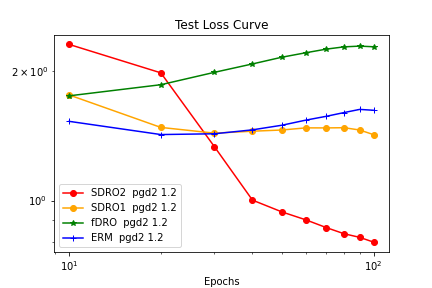}
    }

    \caption{Test loss curves of LeNet under $\ell_2$-PGD attacks with different perturbation levels.}
    \label{LeNet test loss 3}
\end{figure*}
\begin{figure*}[!htbp]
    \centering
    \subfigure[$\epsilon_{\text{MIM}}=0.01$]{%
        \includegraphics[width=0.32\textwidth]{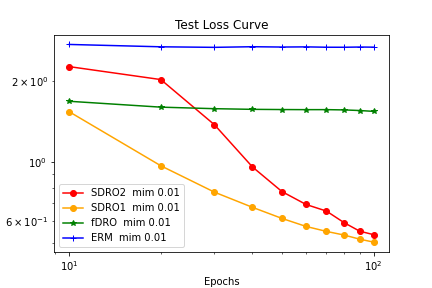}
    }
    \hfill
    \subfigure[$\epsilon_{\text{MIM}}=0.02$]{%
        \includegraphics[width=0.32\textwidth]{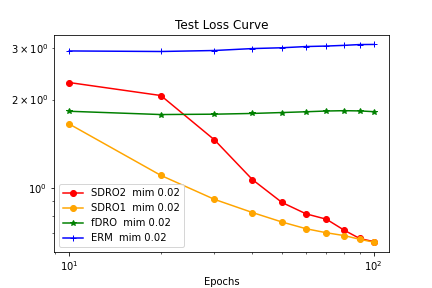}
    }
    \hfill
    \subfigure[$\epsilon_{\text{MIM}}=0.05$]{%
        \includegraphics[width=0.32\textwidth]{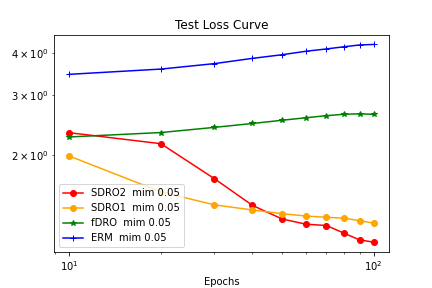}
    }

    \caption{Test loss curves of LeNet under MIM attacks with different perturbation levels.}
    \label{LeNet test loss 4}
\end{figure*}

Figure \ref{LeNet test loss 1}, \ref{LeNet test loss 2}, \ref{LeNet test loss 3} and \ref{LeNet test loss 4} plot the test loss in log-log scale, where the first to last row represents test CE-loss under FGSM, $\ell_{\infty}, \ell_{2}$-PGD attack \citep{pgdattack} and MIM attack \citep{mim} respectively. For $\ell_{\infty},\ell_{2}$-PGD attack, we set their learning rates as $\alpha_{\text{PGD}_{\infty}}= \epsilon_{\text{PGD}_{\infty}}/10, \alpha_{\text{PGD}_{2}}= \epsilon_{\text{PGD}_{2}}/10$; For MIM attack, we set the moving-average parameter $1.0$ and its learning rate $\alpha_{\text{mim}}= \epsilon_{\text{mim}}/15$. From above results, we conclude Sinkhorn DRO dual formulation in general is more robust against distribution shifts. Compared with Sinkhorn DRO dual formulation \eqref{baseline SinkhornDRO}, our method attains smaller accuracy drop and better classification accuracy in most scenarios, which demonstrates the effectiveness of proposed Sinkhorn DRO dual formulation \eqref{dual2} and its convergence analysis.

\section{Ablation Study}\label{Appendix: ablation study}
In this section, we conduct ablation studies for our proposed Sinkhorn DRO dual formulation \eqref{dual2} over linear and logistic regression, where we focus on components of \eqref{dual2} having potential effects of model robustness performance, including regularization parameter $\lambda$, the cost metric $c(\cdot,\cdot)$ and the choices of information divergence conjugate dual $f^*(\cdot)$. For linear regression, we slightly modify the training data generation and model initialization procedures as follows. The optimal model parameters $x^*$ are sampled from $\mathcal{N}(0, 2.25e^{-2})$, and the corresponding output training data are generated according to $\zeta_{\text{output}} = \zeta_{\text{train}} \cdot x^* + \epsilon_{\text{noise}}$, where $\epsilon_{\text{noise}} \sim \mathcal{N}(0, 1e^{-2})$. In addition to normalizing the training and test data as described in Appendix~\ref{Appendix: linear regression experiments}, we also normalize the initial parameters after sampling them from $x_0 \sim \mathcal{N}(x^*, 1)$. We apply gaussian attack on subsampled test dataset to evaluate the robust performance of linear regression trained by proposed Sinkhorn DRO dual formulation \eqref{dual2}.
For logistic regression, we adopt the same setup as described in Appendix~\ref{Appendix: logistic regression experiments}, except that we increase the iteration number of Algorithm~\ref{alg1} from $5$ to $8$. And we vary FGSM attack \citep{fgsm} strength $\epsilon_{\text{fgsm}}$ over subsampled test data to evaluate the robust performance of logistic regression trained via proposed Sinkhorn DRO dual formulation \eqref{dual2}.

\subsubsection{Effects of regularization  \texorpdfstring{$\lambda$}{lambda}}\label{Sub Appendix: ablation on regularization}
In this section, we vary $\lambda$ over the set $\{0.01, 0.1, 1.0, 10\}$. We fine-tune the learning rates for algorithm~\ref{alg2} and algorithm~\ref{alg1} to be $1e^{-2}$ and $1e^{-1}$, respectively, for all models. Figure~\ref{Ablation Study: Regularization} (left) plots the test (quadratic) loss for linear regression under different gaussian attack levels, where the numbers in the legend indicate the corresponding value of $\lambda$. We observe that using a smaller regularization parameter improves the model’s robustness against distributional shifts. In particular, when $\lambda = 0.01$, the green curves exhibit smaller shifts relative to others, which aligns with classical insights on the effect of regularization~\citep{prml}. However, from optimization perspective, a small $\lambda$ makes the proposed dual formulation~\eqref{dual2} more difficult to train with promising accuracy guarantee.
\begin{figure*}[!htbp]
    \subfigure[linear regression]{
        \includegraphics[width=0.45\textwidth]{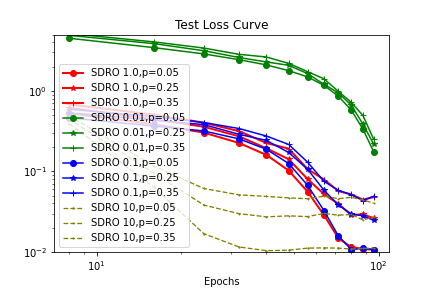}
    }
    \hfill
    \subfigure[logistic regression]{
        \includegraphics[width=0.45\textwidth]{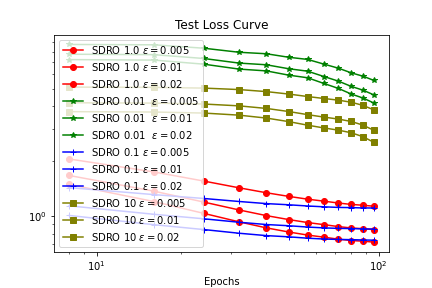}
    }
    \caption{Effects of Regularization $\lambda$}
   \label{Ablation Study: Regularization}
\end{figure*}
We also conduct the same experiments for logistic regression, where the learning rates of algorithm~\ref{alg2} and algorithm~\ref{alg1} are fine-tuned to $8e^{-2}$ and $1e^{-1}$, respectively. Figure~\ref{Ablation Study: Regularization} (right) plots the test loss curves for logistic regression under different FGSM attack levels. We observe that when $\lambda = 0.01$ or $10$, the proposed dual formulation~\eqref{dual2} fails to train a valid model, with the resulting classification accuracies dropping below $10\%$. In contrast, for $\lambda = 0.1$ and $1.0$, the test losses converge to similar scales across different attack levels. These results suggest that a proper choice of the regularization parameter lies in the range around $[0.1, 1.0]$.
%\begin{table}[ht]
%\centering
%\begin{tabular}{ |p{1.5cm}|p{1.5cm}|p{1.5cm}|p{1.5cm}|  }
% \hline
% \multicolumn{4}{|c|}{Classification Accuracy} \\
% \hline
% $\lambda = 0.01$& $\lambda=0.1$  & $\lambda = 1.0$ &$\lambda = 10$\\
% \hline
%  3.7\%  &  33.33\%  & 71.89\% &  63.0\%\\
%  7.8\% & 38.3\% & 74.7\%   & 67.0\% \\
%  15.9\% & 45.0\% & 77.44\% &  69.89\%\\
% \hline
%\end{tabular}
%\end{table}
%\subsubsection{Effects of sample size for $\xi$}
\subsubsection{Effects of cost metric \texorpdfstring{$c(\cdot,\cdot)$}{c(.,.)}}
\begin{figure*}[!htbp]
    \subfigure[linear regression]{
        \includegraphics[width=0.45\textwidth]{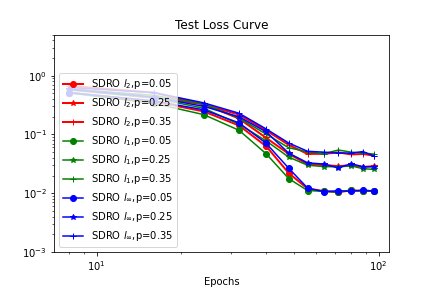}
    }
    \hfill
    \subfigure[logistic regression]{
        
        \includegraphics[width=0.45\textwidth]{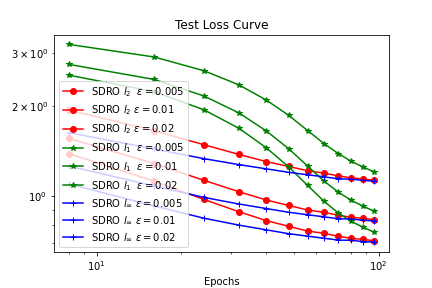}
    }
    \caption{Effects of cost metric $c(\cdot, \cdot)$}
   \label{Ablation Study: cost metric}
\end{figure*}
In this section, we vary the cost metric $c(\cdot, \cdot)$ among the $\{\ell_1, \ell_2, \ell_{\infty}\}$-norms to examine the impact of cost metric choices. Specifically, for the $\ell_2$-norm, we set $c(\zeta,\xi) = \|\zeta - \xi\|_2$; for the $\ell_1$-norm, we set $c(\zeta,\xi) = 0.2 \cdot |\zeta - \xi|_1$; and for the $\ell{\infty}$-norm, we set $c(\zeta,\xi) = 2 \cdot \|\zeta - \xi\|_{\infty}$. To ensure a fair comparison, we fix the regularization parameter at $\lambda = 0.8$ and retain the same learning rates for Algorithm~\ref{alg2} and Algorithm~\ref{alg1} as used in Section~\ref{Sub Appendix: ablation on regularization}. Figure~\ref{Ablation Study: cost metric} (left) plots the test loss for linear regression trained with different cost metrics under the proposed dual formulation~\eqref{dual2}. As shown in the figure, the choice of cost metric has a marginal effect on linear regression, as the test curves across different norms are nearly indistinguishable. However, for logistic regression, we observe that using the $\ell_1$-norm makes the proposed dual formulation~\eqref{dual2} less effective in learning a robust model compared to the $\ell_2$ and $\ell_{\infty}$ norms. This suggests that $\ell_2$ and $\ell_{\infty}$ norms are more reliable choices for $c(\cdot, \cdot)$ in practice.

\subsubsection{Effects of choices of \texorpdfstring{$f^*$}{f*}}
\begin{table}[ht]
    \centering
    \caption{Primal and dual expressions for different divergence measures}
    \label{f expression}
    \renewcommand{\arraystretch}{1.5}
    \begin{tabular}{c|c|c}
        \textbf{Divergence} & $f(t)$ & $f^*(t)$ \\
        \hline
        $\chi^2$ & $\frac{1}{2} (t - 1)^2$ & $-1 + \frac{1}{4} (t + 2)^2_+$ \\
        \hline
        \textit{KL} & $t \log t - t + 1$ & $\exp(t) - 1$\\
        \hline
        \textit{smoothed CVaR} & $f_{\alpha}^{\text{smo}}(t) = \begin{cases} 
        t \log t + \frac{1 - \alpha t}{\alpha} \log \frac{1 - \alpha t}{1 - \alpha}, & t \in [0, 1/\alpha)\\+\infty, & \textit{otherwise}
    \end{cases}$ & $\frac{1}{\alpha}\log(1-\alpha+\alpha\exp(t))$
    \end{tabular}
\end{table}

In this section, we test the effects of conjugate functions $f^*(\cdot)$. We select three classical divergence measures, including $\chi^2$-divergence, KL-divergence and smoothed CVaR divergence~\citep{jin2021nonconvex}. To elaborate, we list the primal and conjugate dual expressions in following Table~\ref{f expression}.

For the linear regression task, we vary the conjugate functions corresponding to different information divergences, while keeping the same learning rates for Algorithm~\ref{alg2} and Algorithm~\ref{alg1} as in Section~\ref{Sub Appendix: ablation on regularization}, and fix the regularization parameter at $\lambda = 0.8$. Specifically, for conjugate dual of smoothed-CVaR, we set $\alpha = 0.5$. Figure~\ref{Ablation Study: f-divergence} (left) plots the test loss for linear regression trained using the proposed dual formulation~\eqref{dual2}. We observe that using the conjugate dual of KL-divergence hinders fast convergence compared with other $f^*$ satisfying $M$-smoothness property. However, the test accuracy evaluated at last epoch reachs same level under the existence of gaussian attack. 
\begin{figure*}[ht!]
     \subfigure[linear regression]{
        \includegraphics[width=0.45\textwidth]{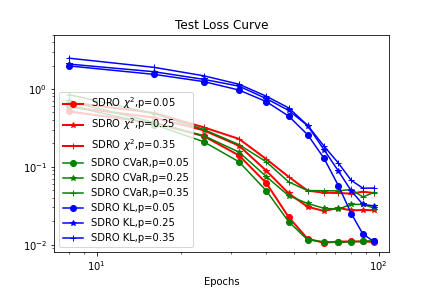}
    }
    \hfill
    \subfigure[logistic regression]{
        \includegraphics[width=0.45\textwidth]{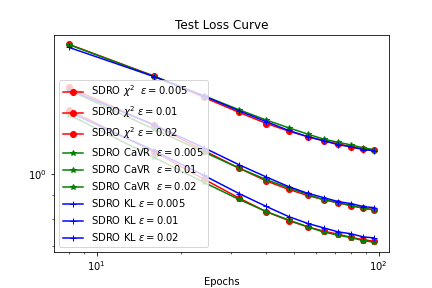}
    }
    \caption{Effects of conjugate function $f^*$}
   \label{Ablation Study: f-divergence}
\end{figure*}
For logistic regression, we observe that the robust performance achieved using the conjugate dual of the KL-divergence is comparable to that obtained with a conjugate dual $f^*$ satisfying the $M$-smoothness property. The performance gap across different models may be attributed to the sensitivity of the KL-divergence to model initialization, as the local $M$-smooth constant can vary depending on the starting point. Nevertheless, empirical results on both linear and logistic regression suggest that enforcing the $M$-smoothness assumption does not hinder practical applicability. As long as the model is not initialized in an ill-conditioned region, choosing $f^*$ as the conjugate dual of the KL-divergence yields similar convergence, supporting the practical validity of the $M$-smoothness assumption.

\section{Proof of Theorem \ref{thm: dual formulation}}\label{proof: dual formulation thm}
\dual*
\begin{proof}
%Since $\mathbb{P}$ is a static nominal distribution, and the expectation over loss function $\mathbb{E}_{\xi \sim \mathbb{Q}}\bigr[\ell(x;\xi)\bigr]$ is a constant for specific $\mathbb{Q}$. 
Merging the infimum operator $\inf_{\gamma \in \Gamma(\mathbb{P},\mathbb{Q})}$ with the supremum operator $\sup_{\mathbb{Q}}$ in \eqref{primal1-0}, we obtain the following equivalent form.
\begin{align}
    \min_{x\in \mathbf{R}^d} \sup_{\mathbb{Q},\gamma \in \Gamma(\mathbb{P}, \mathbb{Q})} \Biggr\{ \mathbb{E}_{\xi \sim \mathbb{Q}}\Bigr[\ell(x;\xi)\Bigr] -\Bigr[\mathbb{E}_{(\zeta,\xi)\sim \gamma}\bigr[\lambda c(\zeta,\xi)\bigr]+ \lambda \red{\beta} D_f(\gamma | \mathbb{P}\otimes \nu)\Bigr]\Biggr\}.
    \label{primal1-0-1}
\end{align}
Regarding the joint distribution $\gamma(\zeta,\xi)$, we decompose it as $\gamma(\zeta,\xi) = \gamma(\xi | \zeta)\mathbb{P}(\zeta)$, where $\mathbb{P}(\zeta)$ denotes the marginal distribution and $\gamma(\xi | \zeta)$ corresponds to the conditional distribution given $\zeta$. Then, the constraint $\gamma \in \Gamma(\mathbb{P}, \mathbb{Q})$ is equivalent to $\mathbb{E}_{\zeta \sim \mathbb{P}}\bigr[\gamma(\xi|\zeta)\bigr]=\mathbb{Q}$, and hence \eqref{primal1-0-1} reduces to
\begin{align}
     \min_{x\in \mathbf{R}^d} \sup_{\mathbb{Q} \text{ s.t. } \mathbb{E}_{\zeta \sim \mathbb{P}}\bigr[\gamma(\xi|\zeta)\bigr]=\mathbb{Q}} \Biggr \{ \mathbb{E}_{\zeta \sim \mathbb{P}}  \Bigr[ \mathbb{E}_{\xi \sim \gamma( \cdot \mid \zeta)}\bigr[\ell(x; \xi) - \lambda c(\zeta, \xi )\bigr] -\lambda \red{\beta} D_f\big(\gamma(\xi|\zeta) | \nu(\xi)\big) \Bigr] \Biggr \}.
     \label{primal1-1}
\end{align}
We claim that the optimal value of \eqref{primal1-1} equals that of the following problem, which takes supremum over all possible conditional distributions $\gamma(\xi|\zeta)$.
\begin{align}
         \min_{x\in \mathbf{R}^d}\sup_{\gamma(\xi | \zeta)} \Biggr \{ \mathbb{E}_{\zeta \sim \mathbb{P}}  \Bigr[ \mathbb{E}_{\xi \sim \gamma(\cdot|\zeta)}\bigr[\ell(x; \xi) - \lambda c(\zeta, \xi )\bigr] -\lambda \red{\beta} D_f\big(\gamma(\xi|\zeta) | \nu(\xi)\big) \Bigr] \Biggr \}.
     \label{primal1-2}
\end{align}
To show this, for any fixed $x$, suppose the supremum of \eqref{primal1-1} is achieved by a certain conditional distribution $\gamma(\xi|\zeta)$ that satisfies $\mathbb{E}_{\zeta \sim \mathbb{P}}\bigr[ \gamma(\xi|\zeta)\bigr]=\mathbb{Q}$, and such $\gamma(\xi|\zeta)$ is feasible for the supremum of \eqref{primal1-2}. Thus, the supremum of \eqref{primal1-1} is lower than the supremum of \eqref{primal1-2}. On the other hand, for any fixed $x$, suppose the supremum of \eqref{primal1-2} is achieved by a certain conditional distribution $\gamma(\xi|\zeta)$. Then, the distribution $\mathbb{Q}$ given by $\mathbb{Q} = \mathbb{E}_{\zeta\sim \mathbb{P}}\bigr[ \gamma(\xi|\zeta) \bigr]$ is feasible for the supremum of \eqref{primal1-1}. Consequently, the supremum of \eqref{primal1-1} is higher than that of \eqref{primal1-2}. In summary, \eqref{primal1-1} and \eqref{primal1-2} are equivalent. 

%Since for regularized DRO, there is no constraint on worst-case distribution $\mathbb{Q}$. Fix $x$, \eqref{primal1-1} reaches supremum when $\mathbb{Q}$ attains worst case distribution. There exists a corresponding conditional distribution $\gamma(\zeta |\xi)$ satisfying . However, existing $\gamma(\zeta|\xi)$ may not be optimal for \eqref{primal1-2}. Thus, the supremum of \eqref{primal1-1} is smaller than supremum of \ref{primal1-2}. On the contrary, fix $x$, when \eqref{primal1-2} reaches supremum and obtains worst-case distribution $\gamma(\xi|\zeta)$, there is a corresponding $\mathbb{Q}$ by the transformation $\mathbb{E}_{\zeta\sim \mathbb{P}}\bigr[ \gamma(\zeta|\xi) \bigr]=\mathbb{Q}$, which is feasible but might not be optimal solution for \eqref{primal1-1}. This indicates the supremum of \eqref{primal1-1} is larger than \eqref{primal1-2}. Combine above arguments, the optimal value of \eqref{primal1-1} and \eqref{primal1-2} under worst-case distribution are equal. 
%We can transform the problem of finding worst-case marginal distribution $\mathbb{Q}$ into finding the worst-case conditional distribution $\gamma(\xi|\zeta)$ for each $\zeta$. 

Furthermore, by principle of interchangeability (Theorem 7.92, Chapter 7.3.2 from \citet{reference4interchangebility}), \eqref{primal1-2} can be rewritten as
\begin{align}
     \min_{x\in \mathbf{R}^d} \mathbb{E}_{\zeta \sim \mathbb{P}}  \Biggr[ \underbrace{ \sup_{
      \gamma(\xi|\zeta)} \Biggr \{ \mathbb{E}_{\xi \sim \gamma(\cdot|\zeta)}\bigr[\ell(x; \xi) - \lambda c(\zeta, \xi )\bigr] -\lambda \red{\beta} D_f\big(\gamma(\xi|\zeta) | \nu(\xi)\big) \Biggr \}}_{\Psi_{\zeta}(x)}\Biggr ].
     \label{primal2}
\end{align}
Next, for every fixed $\zeta$ and $x$, denote ${\mu}_{\gamma|\zeta}$ and ${\mu}_\nu$ as the distributions of the scalar random variable $\ell(x;\xi)-\lambda c(\zeta,\xi)$ under $\gamma(\xi|\zeta)$ and $\nu(\xi)$, respectively. 
we show that the $\Psi_{\zeta}(x)$ defined above is equivalent to the following auxiliary function
\begin{align}
    \widetilde{\Psi}_{\zeta}(x)=\sup_{ {\mu}_{\gamma|\zeta}} \Bigg \{ \mathbb{E}_{{\mu}_{\gamma|\zeta}} \bigr[\ell(x; \xi) - \lambda c(\zeta, \xi )\bigr] -\lambda \red{\beta} D_f({{\mu}_{\gamma|\zeta}} | {\mu}_\nu) \Bigg \},
     \label{primal3}
\end{align}
where $\sup_{{\mu}_{\gamma|\zeta}}$ corresponds to the supremum over all possible distributions ${\mu}_{\gamma|\zeta}$ induced by $\gamma(\xi|\zeta)$.
To show this, for any fixed $x$, suppose the supremum in $\Psi_\zeta(x)$ is achieved by a certain $\gamma(\xi|\zeta)$, and denote the induced distribution of $\ell(x;\xi)-\lambda c(\zeta,\xi)$ as ${\mu}_{\gamma|\zeta}$. It is straightforward to show that 
\begin{align}
\mathbb{E}_{\xi \sim \gamma(.|\zeta)}\bigr[\ell(x; \xi) - \lambda c(\zeta, \xi )\bigr] = \mathbb{E}_{{\mu}_{\gamma|\zeta}}\bigr[\ell(x; \xi) - \lambda c(\zeta, \xi )\bigr].\nonumber 
\end{align}
Moreover, by the data processing inequality, it holds that $D_f\big({\mu}_{\gamma|\zeta} | {\mu}_\nu\big) \leq D_f\big(\gamma(\xi| \zeta) | \nu(\xi)\big)$. Since ${\mu}_{\gamma|\zeta}$ is feasible for the supremum of \eqref{primal3}, we conclude that $\Psi_{\zeta}(x)\leq \widetilde{\Psi}_{\zeta}(x)$. Conversely, for any fixed $x$, suppose the supremum in $\widetilde{\Psi}_{\zeta}(x)$ is achieved by a certain ${\mu}_{\gamma|\zeta}$, then the corresponding $\gamma(\zeta|\xi)$ (which induces ${\mu}_{\gamma|\zeta}$) is feasible for the supremum in $\Psi_{\zeta}(x)$, and hence we have that $\widetilde{\Psi}_{\zeta}(x) \leq \Psi_{\zeta}(x)$. Finally, we conclude that $\Psi_{\zeta}(x)= \widetilde{\Psi}_{\zeta}(x)$.

Using inverse c.d.f. sampling based on the cumulative distribution function over $\mu_{\nu}$, the $f$-divergence between $\mu_{\gamma|\zeta}$ and $\mu_{\nu}$ can be rewritten as
\begin{align}
D_f(\mu_{\gamma|\zeta}|\mu_{\nu}) = \int f\Big(\frac{\mathrm{d}\mu_{\gamma|\zeta}}{\mathrm{d}\mu_{\nu}}\Big)\mathrm{d}\mu_{\nu}= \int f\Big(\frac{\mathrm{d} \mu'}{\mathrm{d}\textit{Unif}([0,1])}\Big)\mathrm{d}\textit{Unif}([0,1]) =D_f\big(\mu'\mid \textit{Unif}([0,1])\big) ,\nonumber
\end{align}
where $\textit{Unif}([0,1])$ represents the uniform distribution over $[0,1]$, $\mu'=\mu_{\nu}^{-1} \circ \mu_{\gamma|\zeta}$. Moreover, for fixed $x$ and $\zeta$, denote the cumulative distribution function of the scalar random variable $\ell(x;\xi)-\lambda c(\zeta,\xi)$ as $F(t)=\mathbb{P}\big(\ell(x;\xi)-\lambda c(\zeta,\xi)\le t\big)$. We can further transform $\Psi_{\zeta}(x)$ into
\begin{align}
    \Psi_{\zeta}(x) &=\sup_{\mu'} \Bigg \{ \mathbb{E}_{F^{-1}(u)\sim \mu'}\bigr[F^{-1}(u)\bigr] -\lambda \red{\beta} D_f\big(\mu'| \textit{Unif}([0,1])\big) \Bigg \}\\
    &=\sup_{\mu'} \int F^{-1}(u)\mathrm{d}\mu'(u) - \lambda \red{\beta} \int f\Big(\frac{\mathrm{d}\mu'(u)}{\mathrm{d}\textit{Unif}([0,1])(u)}\Big)\mathrm{d}\textit{Unif}([0,1])(u) \\
    &=\sup_{\mu'} \int \Big(F^{-1}(u)\frac{\mathrm{d}\mu'(u)}{\mathrm{d}\textit{Unif}([0,1])(u)}-\lambda \red{\beta} f\Big(\frac{\mathrm{d}\mu'(u)}{\mathrm{d}\textit{Unif}([0,1])(u)}\Big)\Big) \mathrm{d}\textit{Unif}([0,1])(u) \\
    &=\sup_{r\in \mathcal{R}}\int_{0}^{1} \bigr[r(u)F^{-1}(u) - \lambda \red{\beta} f(r(u)) \bigr]\mathrm{d}u, \label{eq: inverse}
\end{align}
where $r(u)=\frac{\mathrm{d}\mu'}{\mathrm{d}\textit{Unif}}(u)$ and $\mathcal{R}=\{r:[0,1]\rightarrow \mathbf{R}_{+} \mid \int_{0}^{1} r(u) \mathrm{d}u = 1 \}$. Introduce a dual variable $\eta$ for the constraint $\mathcal{R}$, the Lagrange dual formulation of \eqref{eq: inverse} can be written as
\begin{align}
\Psi_{\zeta}(x) & =\min _{\eta \in \mathbf{R}} \mathcal{L}_{\zeta}(x, \eta), \text { where } \\
& \mathcal{L}_{\zeta}(x, \eta )=\sup _{r}\int_0^1 \big[r(u)F^{-1}(u)-\eta(r(u)-1)-\lambda\red{\beta} f\big(r(u)\big)\big] \mathrm{d} u.
\label{dual0}
\end{align}
Since for fixed $\eta$, $\mathcal{L}_{\zeta}(x,\eta)$ can be denoted as $\sup_{r}\mathbb{E}_{u \sim \textit{Unif}[0,1]}\big[\vartheta_{\eta}(r(u),u) \big]$, where $\vartheta_{\eta}(r(u),u) = r(u)F^{-1}(u)-\eta(r(u)-1)-\lambda \red{\beta} f\big(r(u)\big)$ is a continuous function, by principle of interchangeability stated in Theorem 7.92 \citep{reference4interchangebility}, the order of $\sup$ and integral can be swapped, which yields that
\begin{align}
    \Psi_{\zeta}(x) & =\min_{\eta \in \mathbf{R}} \underbrace{\int_0^1 \sup _{r \in \mathbf{R}_{+}}\big[rF^{-1}(u)-\eta(r-1)-\lambda\red{\beta} f(r)\big] \mathrm{d} u}_{\mathcal{L}_{\zeta}(x,\eta)}.
    \label{dual01}
\end{align}
Define the conjugate function $f^*(v)=\sup_{r \in \mathbf{R}_{+}}\big\{vr-f(r) \big\}$, we further obtain that
\begin{align}
    \Psi_{\zeta}(x)&=\min_{\eta\in \mathbf{R}} \int_{0}^{1}\lambda \red{\beta} f^*\Big(\frac{F^{-1}(u)-\eta}{\lambda \red{\beta}}\Big)\mathrm{d}u +\eta
    %&= \min_{\eta \in \mathbf{R}}\int \lambda \red{\beta} f^*(\frac{\ell(x;\xi)-\lambda c(\zeta,\xi)-\eta}{\lambda \red{\beta}})\mathrm{d}\nu +\eta\\
    =\min_{\eta \in \mathbf{R}}    \mathbb{E}_{\xi \sim \nu}\Big[\lambda \red{\beta} f^*\Big(\frac{\ell(x;\xi)-\lambda c(\zeta,\xi)-\eta}{\lambda \red{\beta}}\Big)\Big]+\eta.
    \label{dual1}
\end{align}
Finally, the dual formulation of \eqref{primal2} is expressed as
\begin{align}
    \min_{x\in \mathbf{R^d}} \mathbb{E}_{\zeta \sim \mathbb{P}}\Biggr[ \underbrace{ \min_{\eta \in \mathbf{R}}    \mathbb{E}_{\xi \sim \nu}\Big[\lambda \red{\beta} f^*\Big(\frac{\ell(x;\xi)-\lambda c(\zeta,\xi)-\eta}{\lambda \red{\beta}}\Big)\Big]+\eta}_{\Psi_{\zeta}(x)} \Biggr],\nonumber
\end{align}
which gives the desired result.
\end{proof}

\section{Proof of Lemma \ref{jin}}
\jin*
\begin{proof}
\noindent We refer to Lemma 2.6 in \citet{jin2021nonconvex} for the detailed proof.
\end{proof}

\section{Proof of Theorem \ref{thm: grad_error_bound}}\label{Proof: grad_error_bound thm}
\approximationerror*
\begin{proof}
    First, taking square root and applying Jensen's inequality on both sides, \eqref{eq: inner near-optima} implies
    \begin{align}
        \mathbb{E}_{\eta_x^{\tilde{d}}(\zeta)}|\nabla_2 \mathcal{L}_{\zeta}(x,\eta_x^{\tilde{d}}(\zeta)) |\leq \tilde{\varepsilon}.
    \end{align}
    Since at $\eta_x^*(\zeta)$, by optimality condition, $\nabla_{2}\mathcal{L}_{\zeta}(x,\eta_x^*(\zeta))=0$, we have
    \begin{align}
        &\mathbb{E}_{\eta_{x}^{\tilde{d}}(\zeta)}\big|\nabla_2 \mathcal{L}_{\zeta}(x,\eta_{x}^{\tilde{d}}(\zeta)) \big| \nonumber\\
        =&\mathbb{E}_{\eta_{x}^{\tilde{d}}(\zeta)}\big | \nabla_{2} \mathcal{L}_{\zeta}(x,\eta_{x}^{\tilde{d}}(\zeta))- \nabla_{2}\mathcal{L}_{\zeta}(x,\eta_x^*(\zeta)) \big | \nonumber\\
       =&\mathbb{E}_{\eta_{x}^{\tilde{d}}(\zeta)} \Big |\mathbb{E}_{\xi\sim \nu}\Big[ (f^{*})' \Big(\frac{\ell(x;\xi) - c(\zeta,\xi)- \eta_{x}^{\tilde{d}}(\zeta)}{\lambda \red{\beta}}\Big)\Big] - \mathbb{E}_{\xi\sim \nu}\Big[(f^{*})' \Big(\frac{\ell(x;\xi) - c(\zeta,\xi) - \eta_{x}^*(\zeta)}{\lambda \red{\beta}}\Big)\Big]\Big | \nonumber \\
       \leq& \tilde{\varepsilon}. \label{epsilon-optimal}
    \end{align}
    Applying expression \eqref{epsilon-optimal}, for given $x$ and every $\zeta$, the approximation error between $\nabla \Psi(x)$ and $\mathbb{E}_{\eta_x^{\tilde{d}}(\zeta)}[\nabla_1 \mathcal{L}(x,\eta_x^{\tilde{d}}(\zeta))]$ can be upper bounded as follows
    \begin{align}
        &\big \|\mathbb{E}_{\eta_{x}^{\tilde{d}}(\zeta)}[\nabla_1 \mathcal{L}_{\zeta}(x,\eta_{x}^{\tilde{d}}(\zeta))] - \nabla \Psi_{\zeta}(x) \big \|\nonumber \nonumber\\
        =&\big \|\mathbb{E}_{\eta_{x}^{\tilde{d}}(\zeta)}[\nabla_1 \mathcal{L}_{\zeta}(x,\eta_{x}^{\tilde{d}}(\zeta)) - \nabla \Psi_{\zeta}(x)] \big \|\nonumber \nonumber\\
        \overset{(i)}{\leq}&\mathbb{E}_{\eta_{x}^{\tilde{d}}(\zeta)}\big \|  \nabla_1 \mathcal{L}_{\zeta}(x,\eta_{x}^{\tilde{d}}(\zeta)) - \nabla \Psi_{\zeta}(x) \big \|\nonumber\\
        =&\mathbb{E}_{\eta_{x}^{\tilde{d}}(\zeta)} \big \|  \mathbb{E}_{\xi\sim \nu} \big [ (f^{*})'\Big(\frac{\ell(x;\xi)- \lambda c(\zeta,\xi)-\eta_{x}^{\tilde{d}}(\zeta)}{\lambda \red{\beta}}\Big) \nabla \ell(x;\xi)  
        -(f^{*})'\Big(\frac{\ell(x;\xi)- \lambda c(\zeta,\xi)-\eta_x^*(\zeta)}{\lambda \red{\beta}}\Big)\nabla \ell(x;\xi)\big] \big \| \nonumber\\
        =&\mathbb{E}_{\eta_{x}^{\tilde{d}}(\zeta)}\big \|  \mathbb{E}_{\xi\sim \nu}\big [(f^{*})'\Big(\frac{\ell(x;\xi)- \lambda c(\zeta,\xi)-\eta_{x}^{\tilde{d}}(\zeta)}{\lambda\red{\beta}}\Big) -(f^{*})'\Big(\frac{\ell(x;\xi)- \lambda c(\zeta,\xi)-\eta_x^*(\zeta)}{\lambda\red{\beta}}\Big)\big]\cdot\nabla \ell(x;\xi) \big \| \nonumber\\
        \overset{(ii)}{\leq}& \mathbb{E}_{\eta_{x}^{\tilde{d}}(\zeta),\xi\sim \nu} \big \|  \big [(f^{*})'\Big(\frac{\ell(x;\xi)- \lambda c(\zeta,\xi)-\eta_{x}^{\tilde{d}}(\zeta)}{\lambda\red{\beta}}\Big) -(f^{*})'\Big(\frac{\ell(x;\xi)- \lambda c(\zeta,\xi)-\eta_x^*(\zeta)}{\lambda\red{\beta}}\Big)\big]\cdot \nabla \ell(x;\xi) \big \| \nonumber\\
        \overset{(iii)}{=}& \mathbb{E}_{\eta_{x}^{\tilde{d}}(\zeta), \xi\sim \nu} \big|  \big [(f^{*})'\Big(\frac{\ell(x;\xi)- \lambda c(\zeta,\xi)-\eta_{x}^{\tilde{d}}(\zeta)}{\lambda\red{\beta}}\Big) -(f^{*})'\Big(\frac{\ell(x;\xi)- \lambda c(\zeta,\xi)-\eta_x^*(\zeta)}{\lambda\red{\beta}}\Big)\big] \big|\cdot \big \| \nabla \ell(x;\xi) \big \|\nonumber\\
        \overset{(iv)}{\leq}& G \mathbb{E}_{\eta_{x}^{\tilde{d}}(\zeta),\xi\sim\nu} \big \|  (f^{*})'\Big(\frac{\ell(x;\xi)- \lambda c(\zeta,\xi)-\eta_{x}^{\tilde{d}}(\zeta)}{\lambda\red{\beta}}\Big) -(f^{*})'\Big(\frac{\ell(x;\xi)- \lambda c(\zeta,\xi)-\eta_x^*(\zeta)}{\lambda\red{\beta}}\Big) \big \|\nonumber\\
        \overset{(v)}{=}& G  \mathbb{E}_{\eta_{x}^{\tilde{d}}(\zeta)}\big \| \mathbb{E}_{\xi\sim \nu} \big [(f^{*})'\Big(\frac{\ell(x;\xi)- \lambda c(\zeta,\xi)-\eta_{x}^{\tilde{d}}(\zeta)}{\lambda\red{\beta}}\Big) -(f^{*})'\Big(\frac{\ell(x;\xi)- \lambda c(\zeta,\xi)-\eta_x^*(\zeta)}{\lambda\red{\beta}}\Big)\big] \big \|\nonumber\\
        =& G\mathbb{E}_{\eta_{x}^{\tilde{d}}(\zeta)}\big \| \nabla_2 \mathcal{L}_{\zeta}(x,\eta_{x}^{\tilde{d}}(\zeta)) - \nabla_2 \mathcal{L}_{\zeta}(x,\eta_x^*(\zeta))\big \|\nonumber\\
        \overset{(vi)}{\leq}& G\tilde{\varepsilon}=\varepsilon, \label{optimalrelation1}
    \end{align}
    where (i) applies Jensen's inequality to extract out expectation over $\eta_x^{\tilde{d}}(\zeta)$; (ii) applies Jensen's inequality to extract out expectation over $\xi$; (iii) extracts scalar\\ $(f^{*})'\Big(\frac{\ell(x;\xi)- \lambda c(\zeta,\xi)-\eta_{x}^{\tilde{d}}(\zeta)}{\lambda\red{\beta}}\Big) -(f^{*})'\Big(\frac{\ell(x;\xi)- \lambda c(\zeta,\xi)-\eta_x^*(\zeta)}{\lambda\red{\beta}}\Big)$ out; (iv) applies $G$-Lipschitz assumption of $\ell(x;\xi)$ stated at Assumption \ref{assum1}; (v) applies monotonicity property of $(f^{*})'$ with regard to $\eta$ given $x$ and fixed $\zeta$ (the sign of $(f^{*})'(\frac{\ell(x;\xi)- \lambda c(\zeta,\xi)-\eta_{x}^{\tilde{d}}(\zeta)}{\lambda\red{\beta}}) -(f^{*})'(\frac{\ell(x;\xi)- \lambda c(\zeta,\xi)-\eta_x^*(\zeta)}{\lambda\red{\beta}} ) $ is fixed regardless of $\xi$, which enables to move the expectation over $\xi$ into the norm without changing its value); (vi) applies condition \eqref{epsilon-optimal}.
    Squaring both sides and re-arranging LHS, we conclude that 
    \begin{align}
    \bigl \|  \nabla \Psi_{\zeta}(x) - \mathbb{E}_{\eta_{x}^{\tilde{d}}(\zeta)} [\nabla_1 \mathcal{L}_{\zeta}(x,\eta_{x}^{\tilde{d}}(\zeta))]\bigl \|^2 \leq \varepsilon^2 , \forall  \zeta\sim \mathbb{P},
    \end{align}
    which gives the desired result.
\end{proof}

\section{Proof of Lemma \ref{thm: smooth2}}\label{Proof: inner smooth thm}
\innersmooth*
\begin{proof}
    Notice that the gradient of $\mathcal{L}_{\zeta,\xi}(x,\eta)$ takes the form 
    \begin{align}
        \nabla_2\mathcal{L}_{\zeta,\xi}(x,\eta)=1-\mathbb{E}_{\xi\sim \nu}\Big[(f^*)' \Big(\frac{\ell(x;\xi)-c(\zeta,\xi)-\eta}{\lambda \red{\beta}}\Big)\Big]. \nonumber
    \end{align}
    Then, we obtain that
    \begin{align}
        &\mathbb{E}_{\xi\sim \nu}\big \| \nabla_{2} \mathcal{L}_{\zeta,\xi}(x,\eta) - \nabla_{2} \mathcal{L}_{\zeta,\xi}(x, \eta') \big \|^2\nonumber\\
        =&\mathbb{E}_{\xi\sim \nu} \big \|(f^*)' \Big(\frac{\ell(x;\xi)-c(\zeta,\xi)-\eta}{\lambda \red{\beta}}\Big)-(f^*)' \Big(\frac{\ell(x;\xi)-c(\zeta,\xi)-\eta'}{\lambda \red{\beta}}\Big) \big \|^2 \nonumber \\
        \leq & (M (\lambda \red{\beta})^{-1})^2 \big \| \eta - \eta'\big \|^2, \nonumber
    \end{align}
    where the inequality is due to the $M$-smoothness assumption of $f^*$ stated in assumption \ref{assum1}.
\end{proof}
\section{Proof of Lemma \ref{thm: boundedmoment2}}\label{proof: inner gradient bounded moment}
\noindent Through this work, we utilize the following proposition for variance computing.
\begin{proposition}[Variance computing]\label{var_comp}
    Given two i.i.d. random variables $X_1$ and $X_2$, the variance can be calculated as
    \begin{align}
    2Var(X_1)=2Var(X_2) = \mathbb{E} \big \|X_1-X_2 \big \|^2.
    \label{Var}
    \end{align}
\end{proposition}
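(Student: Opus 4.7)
The plan is to establish the identity by direct expansion of the squared norm $\|X_1 - X_2\|^2$ and then exploit the i.i.d.\ hypothesis to collapse the resulting terms. First I would write
\begin{align}
\mathbb{E}\|X_1 - X_2\|^2 = \mathbb{E}\|X_1\|^2 - 2\mathbb{E}\langle X_1, X_2\rangle + \mathbb{E}\|X_2\|^2,\nonumber
\end{align}
which is just the polarization identity applied inside the expectation.

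Next, since $X_1$ and $X_2$ are identically distributed, the first and third terms coincide, giving $\mathbb{E}\|X_1\|^2 = \mathbb{E}\|X_2\|^2$. Since $X_1$ and $X_2$ are independent, the cross term factorizes as $\mathbb{E}\langle X_1, X_2\rangle = \langle \mathbb{E}[X_1], \mathbb{E}[X_2]\rangle = \|\mathbb{E}[X_1]\|^2$, again using identical distribution. Substituting yields
\begin{align}
\mathbb{E}\|X_1 - X_2\|^2 = 2\mathbb{E}\|X_1\|^2 - 2\|\mathbb{E}[X_1]\|^2 = 2\bigl(\mathbb{E}\|X_1\|^2 - \|\mathbb{E}[X_1]\|^2\bigr),\nonumber
\end{align}
and the parenthesized expression is exactly $\mathrm{Var}(X_1)$ by definition. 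The corresponding equality with $\mathrm{Var}(X_2)$ follows from $\mathrm{Var}(X_1) = \mathrm{Var}(X_2)$, which is immediate from identical distributions.

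There is essentially no obstacle here: the entire argument is a three-line algebraic manipulation that uses only the definition of variance, linearity of expectation, and the two consequences of the i.i.d.\ assumption (matching second moments and factorization of the cross term). The only minor point worth stating explicitly is that the inner product in the cross term is interpreted componentwise when $X_1, X_2$ are vector-valued, so that the identity $\mathbb{E}\|X\|^2 - \|\mathbb{E}[X]\|^2 = \mathrm{Var}(X)$ is understood as the trace of the covariance (equivalently, the sum of coordinatewise variances). This makes the statement and proof valid in both the scalar and vector settings in which Proposition \ref{var_comp} is invoked elsewhere in the paper.
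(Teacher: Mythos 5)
Your proof is correct and is essentially the same elementary computation as the paper's: the paper inserts and subtracts the mean $\bar X$ inside $\mathbb{E}\|X_1-X_2\|^2$ so that independence kills the cross term, while you expand the square directly and use independence to factor $\mathbb{E}\langle X_1,X_2\rangle$ — two directions of the same two-line argument. Your remark that the vector-valued variance is to be read as $\mathbb{E}\|X-\mathbb{E}X\|^2$ (the trace of the covariance) is a worthwhile clarification consistent with the paper's definition of $Var$.
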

\begin{proof}
The proof simply extends from variance definition, 
\begin{align}
    2Var(X)=&2\mathbb{E}\big \| X_1-\bar{X}\big \|^2 \nonumber\\
    =&2\mathbb{E}\| X_1-\bar{X}\|^2+2\mathbb{E}(X_1 - \bar{X})(X_2 - \bar{X}) \nonumber \\
    =& \mathbb{E} \|X_1 - \bar{X} \|^2+2\mathbb{E}(X_1 - \bar{X})(X_2 - \bar{X}) + \mathbb{E}\|X_2 - \bar{X} \|^2 \nonumber\\
    =& \mathbb{E} \|X_1 - \bar{X} \|^2+2\mathbb{E}(X_1 - \bar{X})(\bar{X}-X_2) + \mathbb{E}\| \bar{X}-X_2 \|^2 \nonumber\\
    =& \mathbb{E} \big \|X_1-\bar{X}+\bar{X}-X_2 \big \|^2
    \nonumber\\
    =&\mathbb{E} \big \|X_1-X_2 \big \|^2, \nonumber
\end{align}
where $\bar{X}$ denotes the mean and we use the facts that (i) $\mathbb{E}\|X_1-\bar{X} \|^2=\mathbb{E}\|X_2-\bar{X} \|^2=Var(X)$; (ii) $\mathbb{E}(X_1 -\bar{X})(X_2 - \bar{X})=0$ for i.i.d random variables.
\end{proof}

\secondmomentboundtwo*
\begin{proof}
Utilizing proposition \ref{var_comp}, we have
\begin{align}
&Var_{\xi}\big(\nabla_2 \mathcal{L}_{\zeta,\xi}(x, \eta )\big)\nonumber\\
=&\frac{1}{2}
\mathbb{E}_{\xi_1,\xi_2}\Big[(f^*)'\Big(\frac{\ell (x;\xi_1)-\lambda c(\zeta,\xi_1)- \eta}{\lambda \red{\beta}}\Big)-(f^*)'\Big(\frac{\ell\left(x;\xi_2\right)-\lambda c(\zeta,\xi_2)- \eta}{\lambda \red{\beta}}\Big)\Big]^2 \nonumber\\
\leq& \frac{1}{2}M^2(\lambda \red{\beta})^{-2} \mathbb{E}_{\xi_1,\xi_2}\big \| \ell(x;\xi_1) - \ell(x;\xi_2) - (\lambda c(\zeta,\xi_1)- \lambda c(\zeta,\xi_2))\big \|^2 \nonumber\\
\overset{(i)}{\leq} & \frac{1}{2}M^2(\lambda \red{\beta})^{-2} 2\cdot (\mathbb{E}_{\xi_1, \xi_2}\|\ell(x;\xi_1)-\ell(x;\xi_2) \|^2+\lambda^2\mathbb{E}_{\xi_1,\xi_2}\|c(\zeta,\xi_1)-c(\zeta,\xi_2) \|^2)\nonumber\\
\overset{(ii)}{\leq}& \frac{1}{2}M^2(\lambda \red{\beta})^{-2} 2(2Var\big(\ell(x;\xi)\big)+2Var_{\xi}\big(c(\zeta,\xi))\big) \nonumber\\
\overset{(iii)}{\leq}& 2M^2 (\lambda\red{\beta})^{-2} (\sigma^2 + \lambda^2\delta^2),\nonumber 
\end{align}
where (i) applies the fact that given any vectors $a$,$b$, we have $\| a-b\|^2 \leq 2\|a\|^2+2\|b\|^2$, (ii) applies proposition \ref{var_comp} and (iii) applies bounded variance assumptions \ref{assum2} for $\ell(\cdot,\xi)$ and $c(\zeta,\cdot)$.
\end{proof}

\section{Proof of Corollary \ref{thm: innersgd}} \label{proof: thm innersgd}
\innersgd*
\begin{proof}
    Notice that the objective function is $K'$-smooth, we have
    \begin{align}
           \mathcal{L}_{\zeta}(x_t,\eta_{x_t}^{d+1}(\zeta))&\overset{(i)}{\leq} \mathcal{L}_{\zeta}(x_t, \eta_{x_t}^{d}(\zeta)) - \langle \nabla_2 \mathcal{L}(x_t,\eta_{x_t}^{d}(\zeta)), \eta_{x_t}^{d}(\zeta)-\eta_{x_t}^{d+1}(\zeta) \rangle +\frac{K'}{2} |\eta_{x_t}^{d+1}(\zeta)-\eta_{x_t}^{d}(\zeta)|^2 \nonumber \\ 
           & \overset{(ii)}{=} \mathcal{L}_{\zeta}(x_t, \eta_{x_t}^{d}(\zeta)) - \langle \nabla_2 \mathcal{L}(x_t,\eta_{x_t}^{d}(\zeta)), \alpha_d v^{\tilde{B}}(\eta_{x_t}^{d}(\zeta)) \rangle +\frac{K'\alpha_d^2}{2} {|v^{\tilde{B}}(\eta_{x_t}^{d}(\zeta))|}^2, \nonumber
        \end{align}
        where (i) applies descent lemma for $K'$-smooth function; (ii) applies update rule $\eta_{x_t}^{d+1}(\zeta)=\eta_{x_t}^{d}(\zeta)-\alpha_dv^{\tilde{B}}(\eta_{x_t}^{d}(\zeta))$.
        Taking expectation over $\xi$ on both sides, we further obtain
        \begin{align}
            &\mathbb{E}_{\xi_{\tilde{B}}\sim\nu}[\mathcal{L}_{\zeta}(x_t, \eta_{x_t}^{d+1}(\zeta))]\nonumber\\
            &\leq \mathbb{E}_{\xi_{\tilde{B}}\sim \nu}[\mathcal{L}_{\zeta}(x_t, \eta_{x_t}^{d}(\zeta))] - \alpha_d\mathbb{E}_{\xi_{\tilde{B}}\sim\nu}|\nabla_2 \mathcal{L}_{\zeta}(x_t,\eta_{x_t}^{d}(\zeta)) |^2+\frac{K'\alpha_d^2}{2}\mathbb{E}_{\xi_{\tilde{B}}\sim \nu}|v^{\tilde{B}}(\eta_{x_t}^{d}(\zeta)) |^2 \nonumber\\
            &\overset{(i)}{\leq} \mathbb{E}_{\xi_{\tilde{B}}\sim \nu}[\mathcal{L}_{\zeta}(x_t, \eta_{x_t}^{d}(\zeta))] - \alpha_d \mathbb{E}_{\xi_{\tilde{B}}\sim\nu}|\nabla_2 \mathcal{L}_{\zeta}(x_t,\eta_{x_t}^{d}(\zeta)) |^2+\frac{K'R_2\alpha_d^2}{2\tilde{B}}+\frac{K'\alpha_d^2}{2}|\nabla_2\mathcal{L}_{\zeta}(x_t, \eta_{x_t}^{d}(\zeta)) |^2 \nonumber\\
            &=\mathbb{E}_{\xi_{\tilde{B}}\sim \nu}[\mathcal{L}_{\zeta}(x_t, \eta_{x_t}^{d}(\zeta))]-\alpha_d (1-\frac{K'\alpha_d}{2})\mathbb{E}_{\xi_{\tilde{B}}\sim\nu}|\nabla_2 \mathcal{L}_{\zeta}(x_t,\eta_{x_t}^{d}(\zeta)) |^2+\frac{K'\alpha_d^2R_2}{2}\nonumber\\
            &\overset{(ii)}{\leq} \mathbb{E}_{\xi_{\tilde{B}}\sim \nu}[\mathcal{L}_{\zeta}(x_t, \eta_{x_t}^{d}(\zeta))]-\frac{\alpha_d}{2}\mathbb{E}_{\xi_{\tilde{B}}\sim\nu}|\nabla_2 \mathcal{L}_{\zeta}(x_t,\eta_{x_t}^{d}(\zeta)) |^2+\frac{K'\alpha_d^2R_2}{2}
        \end{align}
        where (i) utilizes second moment upper bound stated in Lemma \ref{thm: boundedmoment2}; (ii) utilizes the fact $\alpha_d\leq \frac{1}{K'}$ and $\tilde{B}=1$.
        Since $\mathcal{L}_{\zeta}(x_t, \eta_{x_t}^{d}(\zeta))$ does not include randomness from $\xi$, re-organizing above inequality gives
        \begin{align}
            \frac{\alpha_d}{2}|\nabla_2 \mathcal{L}_{\zeta}(x_t, \eta_{x_t}^{d}(\zeta))|^2\leq \mathbb{E}_{\xi\sim\nu}[\mathcal{L}_{\zeta}(x_t,\eta_{x_t}^{d}(\zeta))-\mathcal{L}_{\zeta}(x_{t+1},\eta_{x_t}^{d}(\zeta))]+\frac{K'\alpha_d^2R_2}{2}
        \end{align}
        Summing above equation through $d\in\{0....D-1\}$, we have
        \begin{align}
            \frac{\alpha_d}{2D}\sum_{d=0}^{D-1}|\nabla_2 \mathcal{L}_{\zeta}(x_t, \eta_{x_t}^{d}(\zeta))|^2\leq \frac{\hat{\Delta}}{D}+\frac{K'\alpha_d^2R_2}{2}
        \end{align}
        To make sure the RHS smaller than or equal to $\tilde{\varepsilon}^2$, we need $\alpha_d \leq \frac{\tilde{\varepsilon}^2}{2K'R_2}$,
        and $D\geq\frac{\hat{8\Delta}K'R_2}{\tilde{\varepsilon}^4}$. Then, applying average argument, above inequality implies for any $\tilde{d}$ uniformly sampled from $\{0,....D-1\}$, we have
        \begin{align}
            \mathbb{E}_{\eta_{x}^{\tilde{d}}(\zeta)}|\nabla_2 \mathcal{L}_{\zeta}(x_t, \eta_{x}^{\tilde{d}}(\zeta))|^2=\frac{1}{D}\sum_{d=0}^{D-1}\mathbb{E}_{\eta_{x_t}^{d}(\zeta)}|\nabla_2 \mathcal{L}_{\zeta}(x_t, \eta_{x_t}^{d}(\zeta))|^2\leq \tilde{\varepsilon}^2,
        \end{align}
        which gives the desired result.
        %Above inequality further implies $\mathbb{E}_{\eta_{x}^{\tilde{d}}(\zeta)}|\nabla_2 \mathcal{L}_{\zeta}(x_t,\eta_{x}^{\tilde{d}}(\zeta))|\leq \tilde{\varepsilon}$.
\end{proof}

\section{Proof of Lemma \ref{thm:directionsmooth}}\label{proof: directional smooth thm}
\outersmooth*
\begin{proof}
    For any fixed $\zeta$, define the following two quantities.
    \begin{align}
    A &= \mathbb{E}_{\xi\sim \nu} \Big[ (f^{*})' \Big(\frac{\ell(x;\xi)-c(\xi;\zeta)-\eta_x^*(\zeta)}{\lambda \red{\beta}}\Big)\nabla \ell(x;\xi)- (f^{*})'\Big(\frac{\ell(x;\xi)-c(\xi;\zeta)-\eta_x^*(\zeta)}{\lambda \red{\beta}}\Big)\nabla \ell(x';\xi)\Big],\nonumber \\
    B &=  \mathbb{E}_{\xi\sim \nu} \Big[(f^{*})'\Big(\frac{\ell(x;\xi)-c(\xi;\zeta)-\eta_x^*(\zeta)}{\lambda \red{\beta}}\Big)\nabla \ell(x';\xi) - (f^{*})'\Big(\frac{\ell(x';\xi)-c(\xi;\zeta)-\eta_x^*(\zeta)}{\lambda \red{\beta}}\Big)\nabla \ell(x';\xi)\Big].\nonumber
    \end{align}
    It's easy to show that $A+B =\nabla\Psi_{\zeta}(x) -\nabla_1 \mathcal{L}_{\zeta}(x',\eta_x^*(\zeta))$. Our proof strategy is to bound $A$ and $B$ separately and combine the bounds together to give the results.

\noindent We first proceed upper bound of $A$. Note that $\eta_x^*(\zeta)\in \arg\min_{\eta}\mathcal{L}_{\zeta}(x,\eta)$, and the first-order optimality condition over $\eta$ gives that 
    \begin{align}
        1 - \mathbb{E}_{\xi \sim \nu}\Big[(f^{*})'\Big(\frac{\ell(x;\xi) - c(\xi;\zeta)) - \eta_x^*(\zeta)}{\lambda \red{\beta}}\Big)\Big]=0.
    \end{align}
    Moreover, note that the derivative of the conjugate function $f^*$ satisfies $(f^*)'(v)=r^*(v)=\arg \max_{r\in \mathbf{R}_+}\langle r, v\rangle - f(r)$. Thus, we must have $(f^{*})'(v)  \geq 0$, and the above equation further implies that
    %the maximization argument states that if the function $f^*$ is differentiable, then its derivative is the maximizing argument in the computation of the convex conjugate. By definition of Sinkhorn distance, $f$ is a convex function defined from $\big[0,+\infty\big)$ to $\big[-\infty, +\infty \big]$, which immediately yields the fact $f^{**}=f$. The derivative $(f^{*})'(v)$ at $v$ can be computed via $(f^*)'(v)=\arg \max_{r}\langle r, v\rangle - f(v)$. For primal variable $r$, it is defined over $\big[0,+\infty)$, thus we must have $(f^{*})'(v)  \geq 0$, {\color{red} still unclear, $r(v)$ not defined, and notation of $r$ is misleading},  we further obtain that 
    \begin{align}
    \mathbb{E}_{\xi \sim \nu}\Big |(f^{*})'\Big(\frac{\ell(x;\xi)-c(\xi;\zeta)-\eta_x^*(\zeta)}{\lambda \red{\beta}}\Big)\Big | = \Big |\mathbb{E}_{\xi \sim \nu}\Big[ (f^{*})'\Big(\frac{\ell(x;\xi)-c(\xi;\zeta)-\eta_x^*(\zeta)}{\lambda \red{\beta}}\Big)\Big] \Big |=1.
    \label{max_argument}
    \end{align}
    Consequently, using $L$-smoothness of $\ell(x;\xi)$ and the equality \eqref{max_argument}, we conclude that
    \begin{align}
        \big \| A \big \|\leq L \mathbb{E}_{\xi}\Big |f^{*'}\Big(\frac{\ell(x;\xi)-c(\xi;\zeta)-\eta_x^*(\zeta)}{\lambda \red{\beta}}\Big) \Big | \big \| x-x' \big \| \leq L \big \|x-x' \big \|.
        \label{Bound4A}
    \end{align}
    Next, we bound $B$. By $G$-Lipschitz continuity of $\ell(x;\xi)$ and $M$-smoothness of $(f^{*})'$, we have
    \begin{align}
        \big\| &B \big \| \nonumber \\
        \leq& \mathbb{E}_{\xi} \big \|  \Big [(f^{*})'\Big(\frac{\ell(x;\xi)- \lambda c(\zeta,\xi)-\eta_x^*(\zeta)}{\lambda\red{\beta}}\Big) -(f^{*})'\Big(\frac{\ell(x';\xi)- \lambda c(\zeta,\xi)-\eta_x^*(\zeta)}{\lambda\red{\beta}}\Big)\Big]\nabla \ell(x';\xi) \big \| \nonumber\\
        \leq& \mathbb{E}_{\xi} \big \|  (f^{*})'\Big(\frac{\ell(x;\xi)- \lambda c(\zeta,\xi)-\eta_x^*(\zeta)}{\lambda\red{\beta}}\Big) -(f^{*})'\Big(\frac{\ell(x';\xi)- \lambda c(\zeta,\xi)-\eta_x^*(\zeta)}{\lambda\red{\beta}}\Big) \big \| \big \| \nabla \ell(x';\xi) \big \| \nonumber\\
        \leq& G \mathbb{E}_{\xi} \big \|  (f^{*})'\Big(\frac{\ell(x;\xi)- \lambda c(\zeta,\xi)-\eta_x^*(\zeta)}{\lambda\red{\beta}}\Big) -(f^{*})'\Big(\frac{\ell(x';\xi)- \lambda c(\zeta,\xi)-\eta_x^*(\zeta)}{\lambda\red{\beta}}\Big) \big \| \nonumber\\
        \leq&  G M (\lambda \red{\beta})^{-1} \mathbb{E}_{\xi} \big \|  \ell(x;
        \xi) - \ell(x';\xi) \big \| \nonumber \\
        \leq& (\lambda \red{\beta})^{-1} MG^2 \big \| x - x' \big \|.
        \label{Bound4B}
    \end{align}
    Combining \eqref{Bound4A} and \eqref{Bound4B}, we obtain that
    \begin{align}
    \big \| \nabla \Psi_{\zeta}(x)-\nabla \mathcal{L}_{\zeta}(x',\eta_{x}^*(\zeta))   \big \| \leq \big \| A\big \| + \big \| B \big \| \leq (G^2(\lambda \red{\beta})^{-1}M+L)\big \| x-x' \big \|. \nonumber
    \end{align}
    Squaring both sides and taking expectation over $\zeta$ gives the claimed result.
\end{proof}
\section{Proof of Lemma \ref{thm: boundedmoment3}}\label{proof: outer gradient bounded moment thm}
%\noindent Before proving Lemma \ref{thm: boundedmoment3}, we first prove following lemma \ref{boundedmoment1} where $\eta_{x}(\zeta)^*$ is available.

%\begin{lemma}[Bounded Second Moment]\label{boundedmoment1}
%    Let Assumption \ref{assum2} hold. For the mini-batch stochastic gradient estimator,
%    \begin{align}
%           \hat{g}_t^B= \frac{1}{B} \sum_{i=1}^{B} (f^*)'\Bigr(\frac{\ell(x;\xi_i)-c(\zeta,\xi_i)-\eta_{x}^{\tilde{d}}(\zeta)_{x_t}(\zeta)}{\lambda \red{\beta}}\Bigr) \nabla \ell(x_t;\xi_i).\nonumber
%    \end{align}
%    its second moment is bounded by
%    \begin{align}
%    \mathbb{E}_{\zeta \sim \mathbb{P},\xi_{\tilde{B}}\sim \nu, \eta_{x}^{\tilde{d}}(\zeta)} \big[\| g_t^B \|^2 \big]
%    \leq R_1+ \big \| \nabla_{1} \mathbb{E}_{\zeta \sim \mathbb{P},\eta_{\tilde{d}}}\bigr[\mathcal{L}_{\zeta}(x_t,\eta_{x}^{\tilde{d}}(\zeta)_{x_t}(\zeta))\bigr]\big \|^2,
%    \label{bounded var1}
%    \end{align}
%    where $R_1=\frac{8G^2+8G^2\varepsilon^2+20G^2M^2(\lambda \red{\beta})^{-2}\sigma^2+20G^2M^2\red{\beta}^{-2}\delta^2}{B} + G^2M^2\red{\beta}^{-2} \delta^2$. \\
%\end{lemma}
\secondmomentboundthree*
\begin{proof}
    Throughout, given $x$, $\eta$,
    we denote $\mathcal{L}_{\zeta,\xi}(x,\eta) = f^*(\frac{\ell(x;\xi)-c(\zeta,\xi)-\eta}{\lambda \red{\beta}})$ to simplify notation. Notice that the outputs from Algorithm \ref{alg1}, $\tilde{d}$ also contains randomness, we use $\mathbb{E}_{\eta_{x}^{\tilde{d}}(\zeta)}(\cdot)$ to denote expectation over $\eta_{x}^{\tilde{d}}(\zeta)$ through the proof.
    Then, for $\mathbb{E}_{\zeta\sim \mathbb{P},\xi\sim \mathbb{\nu},\eta_{x}^{\tilde{d}}(\zeta)} \big \|\nabla_1 \mathcal{L}_{\zeta,\xi}(x,\eta_x^{\tilde{d}}(\zeta)\big\|^2$,
    we decompose it as follows
\begin{align}
    & \mathbb{E}_{(\zeta\sim \mathbb{P},\eta_{x}^{\tilde{d}}(\zeta)),\xi\sim \mathbb{\nu}} \big \| \nabla_1 \mathcal{L}_{\zeta,\xi}(x,\eta_x^{\tilde{d}}(\zeta))\big \|^2 \nonumber\\
    =& \mathbb{E}_{(\zeta \sim \mathbb{P},\eta_{x}^{\tilde{d}}(\zeta))}\big[\mathbb{E}_{\xi \sim \mathbb{\nu}}\big \| \nabla_1 \mathcal{L}_{\zeta,\xi}(x,\eta_x^{\tilde{d}}(\zeta))\big \|^2\big] \nonumber\\
    \overset{(i)}{=}& \mathbb{E}_{(\zeta \sim \mathbb{P},\eta_{x}^{\tilde{d}}(\zeta))}\big[Var_{\xi}\big(\nabla_1 \mathcal{L}_{\xi,\zeta}(x,\eta_x^{\tilde{d}}(\zeta))\big)+ \big \|\nabla_1 \mathcal{L}_{\zeta}(x,\eta_x^{\tilde{d}}(\zeta)) \big \|^2\big]\nonumber\\
    =& \mathbb{E}_{(\zeta \sim \mathbb{P},\eta_{x}^{\tilde{d}}(\zeta))}\big[Var_{\xi}\big(\nabla_1 \mathcal{L}_{\xi,\zeta}(x,\eta_x^{\tilde{d}}(\zeta))\big)\big] + \mathbb{E}_{(\zeta\sim \mathbb{P},\eta_{x}^{\tilde{d}}(\zeta))} \big\|\nabla_1 \mathcal{L}_{\zeta}(x,\eta_x^{\eta_{x}^{\tilde{d}}(\zeta)}(\zeta)) \big \|^2 \nonumber \\
    \overset{(ii)}{=}& \mathbb{E}_{(\zeta \sim \mathbb{P},\eta_{x}^{\tilde{d}}(\zeta))}\big[Var_{\xi}\big(\nabla_1 \mathcal{L}_{\xi,\zeta}(x,\eta_x^{\tilde{d}}(\zeta))\big)\big] +\big[Var_{(\zeta,\eta_{x}^{\tilde{d}}(\zeta))}\big(\nabla_1 \mathcal{L}_{\zeta}(x,\eta_x^{\tilde{d}}(\zeta))\big) \big ] \nonumber\\
    &\quad+ \big[\big \| \nabla_1 \mathbb{E}_{\zeta\sim \mathbb{P},\eta_{x}^{\tilde{d}}(\zeta)}\big[\mathcal{L}_{\zeta}(x,\eta_x^{\tilde{d}}(\zeta))\big]\big \|^2\big], \nonumber
    %=& \mathbb{E}_{\zeta\sim \mathbb{P},\eta_{x}^{\tilde{d}}(\zeta)}\big[Var_{\xi}\big(\nabla_1 \mathcal{L}_{\xi,\zeta}(x,\eta_x^{\tilde{d}}(\zeta))\big)\big] + \mathbb{E}_{\eta_{x}^{\tilde{d}}(\zeta)}\big[Var_{\zeta}\big(\nabla_1 \mathcal{L}_{\zeta}(x,\eta_x^{\tilde{d}}(\zeta))\big)\big] + \mathbb{E}_{\eta_{x}^{\tilde{d}}(\zeta)}\big[\big \| \nabla \mathbb{E}_{\zeta\sim \mathbb{P}}\big[\mathcal{L}_{\zeta}(x,\eta_{x}^{\tilde{d}}(\zeta))\big]\big \|^2\big], \nonumber
\end{align}
where $(i)$ and $(ii)$ apply equation $\mathbb{E}\bigr[\rho(\varpi)^2\bigr]=Var_{\varpi}(\rho(\varpi))+(\mathbb{E}\bigr[\rho(\varpi)\bigr])^2$, which holds for any random variable $\varpi$ and $\rho(\cdot):\mathbf{R}^d\rightarrow \mathbf{R}$.\\
Next, we bound $\mathbb{E}_{\zeta\sim\mathbb{P}, \eta_{x}^{\tilde{d}}(\zeta)}\big[Var_{\xi}\big(\nabla_1 \mathcal{L}_{\xi,\zeta}(x,\eta_x^{\tilde{d}}(\zeta))\big)\big]$ and $Var_{(\zeta,\eta_x^{\tilde{d}}(\zeta))}\big(\nabla_1 \mathcal{L}_{\zeta}(x,\eta_x^{\tilde{d}}(\zeta))\big)$ separately. \\
For $Var_{\xi}\big(\nabla_1 \mathcal{L}_{\xi,\zeta}(x,\eta_x^{\tilde{d}}(\zeta))\big)$, we have
\begin{align}
    &Var_{\xi}\big(\nabla_1 \mathcal{L}_{\zeta,\xi}(x,\eta_x^{\tilde{d}}(\zeta))\big) \nonumber\\
    \overset{(i)}{=}& \frac{1}{2} \mathbb{E}_{\xi_1,\xi_2}\big \| (f^{*})'\Big ( \frac{\ell(x;\xi_1)-\lambda c(\zeta,\xi_1)-\eta_x^{\tilde{d}}(\zeta)}{\lambda \red{\beta}}  \Big )\nabla \ell(x;\xi_1) \nonumber\\
    &\qquad -(f^{*})' \Big ( \frac{\ell(x;\xi_2)-\lambda c(\zeta,\xi_2)-\eta_x^{\tilde{d}}(\zeta)}{\lambda \red{\beta}}  \Big )
    \nabla \ell(x;\xi_2)\big \|^2 \nonumber\\
    \overset{}{=}& \frac{1}{2} \mathbb{E}_{\xi_1,\xi_2}\big \| (f^{*})'\Big ( \frac{\ell(x;\xi_1)-\lambda c(\zeta,\xi_1)-\eta_x^{\tilde{d}}(\zeta)}{\lambda \red{\beta}}  \Big )\nabla \ell(x;\xi_1)\nonumber\\ 
    &\qquad -(f^{*})'\Big ( \frac{\ell(x;\xi_1)-\lambda c(\zeta,\xi_1)-\eta_x^{\tilde{d}}(\zeta)}{\lambda \red{\beta}}  \Big )\nabla \ell(x;\xi_2) \nonumber \\
    &\qquad+(f^{*})' \Big ( \frac{\ell(x;\xi_1)-\lambda c(\zeta,\xi_1)-\eta_x^{\tilde{d}}(\zeta)}{\lambda \red{\beta}}  \Big )\nabla \ell(x;\xi_2)\nonumber \\
    &\qquad-(f^{*})' \Big ( \frac{\ell(x;\xi_2)-\lambda c(\zeta,\xi_2)-\eta_x^{\tilde{d}}(\zeta)}{\lambda \red{\beta}}  \Big ) \nabla \ell(x;\xi_2)\big \|^2 \nonumber\\
    \overset{(ii)}{\leq}& \mathbb{E}_{\xi_1,\xi_2}\Big[ (f^{*})'\Big ( \frac{\ell(x;\xi_1)-\lambda c(\zeta,\xi_1)-\eta_x^{\tilde{d}}(\zeta)}{\lambda \red{\beta}} \Big )^2\big \| \nabla \ell(x;\xi_1) - \nabla \ell(x;\xi_2) \big \|^2 \Big ]\nonumber \\
    &\qquad+\mathbb{E}_{\xi_1,\xi_2} \Big [\big \| \nabla \ell(x;\xi_2) \big \|^2 \cdot \Big ( (f^{*})'\Big ( \frac{\ell(x;\xi_1)-\lambda c(\zeta,\xi_1)- \eta_x^{\tilde{d}}(\zeta)}{\lambda \red{\beta}} \Big) \nonumber\\
    & \quad \qquad - (f^{*})'\Big ( \frac{\ell(x;\xi_2)-\lambda c(\zeta,\xi_2) - \eta_x^{\tilde{d}}(\zeta)}{\lambda \red{\beta}} \Big ) \Big )^2 \Big ]\nonumber\\
    \overset{(iii)}{\leq}& 4G^2 \mathbb{E}_{\xi_1}\Big [ (f^{*})'\Big ( \frac{\ell(x;\xi_1)-\lambda c(\zeta,\xi_1)-\eta_x^{\tilde{d}}(\zeta)}{\lambda \red{\beta}} \Big )^2 \Big ] \nonumber \\
    & \quad +G^2M^2(\lambda \red{\beta})^{-2}\mathbb{E}_{\xi_1,\xi_2}\big [ \ell(x;\xi_1)-\lambda c(\zeta,\xi_1) - \ell(x;\xi_2)+\lambda c(\zeta,\xi_2) \big ]^2 \nonumber \\
    \overset{(iv)}{\leq}& 4G^2 \mathbb{E}_{\xi_1}\Big [ (f^{*})'\Big ( \frac{\ell(x;\xi_1)-\lambda c(\zeta,\xi_1)-\eta_x^{\tilde{d}}(\zeta)}{\lambda \red{\beta}} \Big )^2 \Big ]+2G^2M^2(\lambda \red{\beta})^{-2}(2\sigma^2+2\lambda^2 \delta^2), 
    \label{1st_bound_of_var}
\end{align}
where (i) applies proposition \ref{var_comp};
(ii) applies the fact $(a+b)^2 \leq 2(a^2+b^2)$; (iii) applies the assumption G-Lipschitz continuity of $\ell(x;\xi)$ function and M-smoothness of $f^*$; (iv) applies bounded variance assumption of $\ell(x,\xi)$ and $c(\zeta,\xi)$. \\
Applying inequality $a^2 \leq 2(a-1)^2+2$, the term in \eqref{1st_bound_of_var},
$\mathbb{E}_{\xi_1}\Big [ (f^{*})'\big ( \frac{\ell(x;\xi_1)-\lambda c(\zeta,\xi_1)-\eta_x^{\tilde{d}}(\zeta)}{\lambda \red{\beta}} \big )^2 \Big ]$ can be further upper bounded as
\begin{align}
    &\mathbb{E}_{\xi_1}\Big [ (f^{*})'\Big( \frac{\ell(x;\xi_1)-\lambda c(\zeta,\xi_1)-\eta_x^{\tilde{d}}(\zeta)}{\lambda \red{\beta}} \Big )^2 \Big ] \nonumber \\
    &\leq 2 + 2\mathbb{E}_{\xi_1}\Big [ 1- (f^{*})'\Big( \frac{\ell(x;\xi)-\lambda c(\zeta,\xi_1)- \eta_x^{\tilde{d}}(\zeta)}{\lambda \red{\beta}} \Big )  \Big ]^2 \nonumber\\
    &\leq 2\Big(1+\big | \nabla_{2} \mathcal{L}_{\zeta}(x,\eta_x^{\tilde{d}}(\zeta))\big |^2 + Var_{\xi}\big(\nabla_{2} \mathcal{L}_{\zeta,\xi}(x,\eta_x^{\tilde{d}}(\zeta))\big)\Big). 
    \label{2nd_bound_of_var}
\end{align}
Since at $\eta_{x}^{\tilde{d}}(\zeta)$, by \eqref{eq: inner near-optima} stated in Theorem \ref{thm: innersgd}, we conclude $\mathbb{E}_{\eta_{x}^{\tilde{d}}(\zeta)}|\nabla_2 \mathcal{L}_{\zeta}(x,\eta_x^{\tilde{d}}(\zeta))|^2 \leq \tilde{\varepsilon}^2={\varepsilon^2}/{G^2}$.
Moreover, from lemma \ref{thm: boundedmoment2}, we have
\begin{align}
Var_{\xi}\big(\nabla_2 \mathcal{L}_{\zeta,\xi}(x,\eta_x^{\tilde{d}}(\zeta))\big) \leq 2M^2(\lambda \red{\beta})^{-2}(\sigma^2+\lambda^2\delta^2).
\label{3rd_bound_of_var}
\end{align}
Thus, combining inequalities \eqref{1st_bound_of_var} \eqref{2nd_bound_of_var} \eqref{3rd_bound_of_var}, we have
\begin{align}
    &\mathbb{E}_{\zeta\sim\mathbb{P},\eta_{x}^{\tilde{d}}(\zeta)}\big[Var_{\xi}\big(\nabla_1 L_{\zeta,\xi}(x,\eta_x^{\tilde{d}}(\zeta))\big)\big]\nonumber\\
    \leq& \mathbb{E}_{\zeta\sim\mathbb{P},\eta_{x}^{\tilde{d}}(\zeta)}\big[4G^2\cdot 2(1+\big | \nabla_2 L_{\zeta}(x,\eta_x^{\tilde{d}}(\zeta))\big |^2+2M^2(\lambda \red{\beta})^{-2}(\sigma^2+\lambda^2\delta^2))\nonumber\\
    &\qquad +2G^2M^2(\lambda \red{\beta})^{-2}(2\sigma^2+2\lambda^2 \delta^2) \big]\nonumber\\
    \leq & 8G^2+8G^2\tilde{\varepsilon}^2+20G^2M^2(\lambda \red{\beta})^{-2}\sigma^2+ 20G^2M^2\red{\beta}^{-2}\delta^2\nonumber\\
    =& 8G^2+8\varepsilon^2+20G^2M^2(\lambda \red{\beta})^{-2}\sigma^2+ 20G^2M^2\red{\beta}^{-2}\delta^2.
    \label{5th_bound_of_var}
\end{align}
For $Var_{(\zeta, \eta_{x_t}^{\tilde{d}}(\zeta))}\big(\nabla_1 \mathcal{L}_{\zeta}(x,\eta_x^{\tilde{d}}(\zeta))\big)$,
we have 
\begin{align}
&Var_{(\zeta, \eta_{x}^{\tilde{d}}(\zeta))}\big ( \nabla_1 \mathcal{L}_{\zeta}(x,\eta_x^{\tilde{d}}(\zeta)) \big ) \nonumber\\
=& \frac{1}{2}\mathbb{E}_{(\zeta_1,\eta_{x}^{\tilde{d}}(\zeta_1)),(\zeta_2,\eta_x^{\tilde{d}}(\zeta_2))} \big \| \mathbb{E}_{\xi} \Big [\Big (   (f^{*})'\Big ( \frac{\ell(x;\xi)-\lambda c(\zeta_1,
\xi)-\eta_{x}^{\tilde{d}}(\zeta_1)}{\lambda \red{\beta}} \Big ) \nonumber\\
&\qquad -(f^{*})'\Big ( \frac{\ell(x;\xi)-\lambda c(\zeta_2,
\xi)-\eta_{x}^{\tilde{d}}(\zeta_2)}{\lambda \red{\beta}} \Big ) \Big )\cdot\nabla \ell(x;\xi) \Big ] \big \|^2 \nonumber\\
\overset{(i)}{\leq}& \frac{1}{2}\mathbb{E}_{(\zeta_1,\eta_{x}^{\tilde{d}}(\zeta_1)),(\zeta_2,\eta_x^{\tilde{d}}(\zeta_2)),\xi} \big \| \Big [   (f^{*})'\Big ( \frac{\ell(x;\xi)-\lambda c(\zeta_1,
\xi)-\eta_{x}^{\tilde{d}}(\zeta_1)}{\lambda \red{\beta}} \Big ) \nonumber\\
&\qquad - (f^{*})'\Big ( \frac{\ell(x;\xi)-\lambda c(\zeta_2,
\xi)-\eta_{x}^{\tilde{d}}(\zeta_2)}{\lambda \red{\beta}} \Big ) \Big ]\cdot \nabla \ell(x;\xi)  \big \|^2 \nonumber\\
\overset{(ii)}{=}& \frac{1}{2}\mathbb{E}_{(\zeta_1,\eta_{x}^{\tilde{d}}(\zeta_1)),(\zeta_2,\eta_x^{\tilde{d}}(\zeta_2)),\xi} |   (f^{*})'\Big ( \frac{\ell(x;\xi)-\lambda c(\zeta_1,
\xi)-\eta_{x}^{\tilde{d}}(\zeta_1)}{\lambda \red{\beta}} \Big ) \nonumber \\
&\qquad - (f^{*})'\Big ( \frac{\ell(x;\xi)-\lambda c(\zeta_2,
\xi)-\eta_{x}^{\tilde{d}}(\zeta_2)}{\lambda \red{\beta}} \Big ) |^2 \big \|\nabla \ell(x;\xi) \big \|^2 \nonumber\\
\overset{(iii)}{\leq}&\frac{1}{2}G^2 \mathbb{E}_{(\zeta_1,\eta_{x}^{\tilde{d}}(\zeta_1)),(\zeta_2,\eta_x^{\tilde{d}}(\zeta_2)),\xi} |   (f^{*})'\Big ( \frac{\ell(x;\xi)-\lambda c(\zeta_1,
\xi)-\eta_{x}^{\tilde{d}}(\zeta_1)}{\lambda \red{\beta}} \Big )-1 \nonumber \\
&\qquad - (f^{*})'\Big ( \frac{\ell(x;\xi)-\lambda c(\zeta_2,
\xi)-\eta_{x}^{\tilde{d}}(\zeta_2)}{\lambda \red{\beta}} \Big )+1 |^2 \nonumber\\
\overset{(iv)}{\leq}&G^2\mathbb{E}_{(\zeta_1,\eta_{x}^{\tilde{d}}(\zeta_1)),(\zeta_2,\eta_x^{\tilde{d}}(\zeta_2))}\Big( \mathbb{E}_{\xi}|\nabla_2 \mathcal{L}_{\zeta_1,\xi}(x;\eta_{x}^{\tilde{d}}(\zeta_1))|^2+\mathbb{E}_{\xi}|\nabla_2 \mathcal{L}_{\zeta_2,\xi}(x;\eta_{x}^{\tilde{d}}(\zeta_2))|^2\Big) \nonumber\\
\overset{(v)}{=}& G^2\mathbb{E}_{(\zeta_1,\eta_{x}^{\tilde{d}}(\zeta_1)),(\zeta_2,\eta_x^{\tilde{d}}(\zeta_2))}\Big( Var_{\xi}(\nabla_2\mathcal{L}_{\zeta_1,\xi}(x;\eta_{x}^{\tilde{d}}(\zeta_1)))+|\nabla_2 \mathcal{L}_{\zeta_1}(x;\eta_{x}^{\tilde{d}}(\zeta_1))|^2\nonumber \\
&\qquad+Var_{\xi}(\nabla_2\mathcal{L}_{\zeta_2,\xi}(x;\eta_{x}^{\tilde{d}}(\zeta_2)))+|\nabla_2 \mathcal{L}_{\zeta_2}(x;\eta_{x}^{\tilde{d}}(\zeta_2))|^2\Big)\nonumber\\
\overset{(vi)}{\leq}&G^2\Big(4M^2(\lambda \red{\beta})^{-2}(\sigma^2+\lambda\delta^2)+\mathbb{E}_{\zeta_1,\eta_x^{\tilde{d}}(\zeta_1)}|\nabla_2 \mathcal{L}_{\zeta_2}(x;\eta_{x}^{\tilde{d}}(\zeta_1))|^2+\mathbb{E}_{\zeta_2,\eta_x^{\tilde{d}}(\zeta_2)}|\nabla_2 \mathcal{L}_{\zeta_2}(x;\eta_{x}^{\tilde{d}}(\zeta_2))|^2\Big)\nonumber\\
\overset{(vii)}{\leq}&4G^2M^2(\lambda \red{\beta})^{-2}(\sigma^2+\lambda\delta^2)+2G^2\tilde{\varepsilon}^2\nonumber\\
=& 4G^2M^2(\lambda\red{\beta})^{-2}\sigma^2+4G^2M^2\red{\beta}^{-2}\delta^2+2\varepsilon^2
%\overset{(iii)}{\leq}& \frac{1}{2}G^2M^2(\lambda\red{\beta})^{-2} \mathbb{E}_{\zeta_1,\zeta_2,\xi}\big \| \lambda c(\zeta_1,\xi)-\lambda c(\zeta_2,\xi) \big \|^2 \nonumber\\
%=& \frac{1}{2}G^2M^2(\lambda\red{\beta})^{-2}\cdot 2\lambda^2\mathbb{E}_{\xi}\Big[ Var_{\zeta}(c(\zeta,\xi))\Big] \nonumber\\
%\overset{(iv)}{\leq}& G^2M^2\red{\beta}^{-2} \delta^2,
\label{6th_bound_of_var}
\end{align}
where (i) applies Jensen's inequality move expectation over $\xi$ out squared-norm;
(ii) extracts scalar $ |   (f^{*})'\Big ( \frac{\ell(x;\xi)-\lambda c(\zeta_1,
\xi)-\eta_{x}^{\tilde{d}}(\zeta)}{\lambda \red{\beta}} \Big ) - (f^{*})'\Big ( \frac{\ell(x;\xi)-\lambda c(\zeta_2,
\xi)-\eta_{x}^{\tilde{d}}(\zeta)}{\lambda \red{\beta}} \Big ) |$ out; (iii) applies $G$-Lipschitz assumption of $\ell(\cdot,\xi)$ stated at assumption \ref{assum1}. (iv) applies inequality $(a+b)^2\leq 2a^2+2b^2$ to decouple $|\nabla_2 \mathcal{L}_{\zeta_1,\xi}(x;\eta_x^{\tilde{d}}
(\zeta_1))|$ and $|\nabla_2 \mathcal{L}_{\zeta_2,\xi}(x;\eta_x^{\tilde{d}}
(\zeta_2))|$; (v) applies equality $\mathbb{E}\bigr[\rho(\varpi)^2\bigr]=Var_{\varpi}(\rho(\varpi))+(\mathbb{E}\bigr[\rho(\varpi)\bigr])^2$, which holds for any random variable $\varpi$ and $\rho(\cdot):\mathbf{R}^d\rightarrow \mathbf{R}$; (v) and (vi) apply Lemma \ref{thm: boundedmoment2} to upper bound $Var_{\xi}(\nabla_2 \mathcal{L}_{\zeta_1,\xi}(x;\eta_x^{\tilde{d}}(\zeta_1)))$, $Var_{\xi}(\nabla_2 \mathcal{L}_{\zeta_2,\xi}(x;\eta_x^{\tilde{d}}(\zeta_2)))$; (vii) applies \eqref{eq: inner near-optima} stated in Theorem \ref{thm: innersgd} to upper bound $\mathbb{E}_{\eta_{x}^{\tilde{d}}(\zeta_1)}|\nabla_2 \mathcal{L}_{\zeta_2}(x;\eta_x^{\tilde{d}}(\zeta_1))|^2$ and $\mathbb{E}_{\eta_{x}^{\tilde{d}}(\zeta_2)}|\nabla_2 \mathcal{L}_{\zeta_2}(x;\eta_x^{\tilde{d}}(\zeta_2))|^2$ by $\tilde{\varepsilon}^2={\varepsilon^2}/{G^2}$. \\
Combining \eqref{5th_bound_of_var} \eqref{6th_bound_of_var}, we have
\begin{align}
    & \mathbb{E}_{\zeta \sim \mathbb{P},\eta_x^{\tilde{d}}(\zeta),\xi \sim \mathbb{\nu}} \big \| \nabla_1 \mathcal{L}_{\zeta,\xi}(x,\eta_x^{\tilde{d}}(\zeta))\big \|^2 \nonumber\\
    %=& \mathbb{E}_{\zeta\sim \mathbb{P}}\big[\mathbb{E}_{\xi\sim \mathbb{\nu}}\big \| \nabla_1 \mathcal{L}_{\zeta,\xi}(x,\eta_x^{\tilde{d}}(\zeta))\big \|^2\big] \nonumber\\
    %=& \mathbb{E}_{\zeta\sim \mathbb{P}}\big[Var_{\xi}(\nabla_1 \mathcal{L}_{\zeta,\xi}\big(x,\eta_x^{\tilde{d}}(\zeta))\big)+ \big \|\nabla_1 \mathcal{L}_{\zeta}(x,\eta_x^{\tilde{d}}(\zeta)) \big \|^2)\big]\nonumber\\
    %= & \mathbb{E}_{\zeta\sim \mathbb{P}}\big[Var_{\xi}(\nabla_1 \mathcal{L}_{\zeta,\xi}\big(x,\eta_x^{\tilde{d}}(\zeta))\big)\big] + \mathbb{E}_{\zeta\sim \mathbb{P}} \big \|\nabla_1 \mathcal{L}_{\zeta}(x,\eta_x^{\tilde{d}}(\zeta)) \big \|^2 \nonumber\\
    %= & \mathbb{E}_{\zeta\sim \mathbb{P}}\big[Var_{\xi}\big(\nabla_1 \mathcal{L}_{\zeta,\xi}(x,\eta_x^{\tilde{d}}(\zeta))\big)\big] + Var_{\zeta}\big(\nabla_1 \mathcal{L}_{\zeta}(x,\eta_x^{\tilde{d}}(\zeta))\big) + \big \| \nabla_1 \mathcal{L}(x,\eta_x^{\tilde{d}}(\zeta))\big \|^2 \nonumber\\
    \leq& \mathbb{E}_{(\zeta \sim \mathbb{P},\eta_{x}^{\tilde{d}}(\zeta))}\big[Var_{\xi}\big(\nabla_1 \mathcal{L}_{\xi,\zeta}(x,\eta_x^{\tilde{d}}(\zeta))\big)\big] +\big[Var_{(\zeta,\eta_{x}^{\tilde{d}}(\zeta))}\big(\nabla_1 \mathcal{L}_{\zeta}(x,\eta_x^{\tilde{d}}(\zeta))\big) \big ] \nonumber\\
    &\qquad + \big[\big \| \nabla_1 \mathbb{E}_{\zeta\sim \mathbb{P},\eta_{x}^{\tilde{d}}(\zeta)}\big[\mathcal{L}_{\zeta}(x,\eta_x^{\tilde{d}}(\zeta))\big]\big \|^2\big], \nonumber \\
    \leq & 8G^2+10\varepsilon^2+24G^2M^2(\lambda \red{\beta})^{-2}\sigma^2+24G^2M^2\red{\beta}^{-2}\delta^2+\big \| \nabla_1 \mathbb{E}_{\zeta,\eta_x^{\tilde{d}}(\zeta)} \mathcal{L}(x,\eta_x^{\tilde{d}}(\zeta))\big \|^2. \label{final_bound_of_var}
\end{align}
For mini-batch stochastic gradient estimator \eqref{gradient3},
the RHS of \eqref{final_bound_of_var} becomes
\begin{align}
    & \mathbb{E}_{\zeta\sim \mathbb{P},\eta_{x}^{\tilde{d}}(\zeta),\xi_B\sim \nu} \big \| \nabla_1 \mathcal{L}_{\zeta,\xi}(x,\eta_x^{\tilde{d}}(\zeta))\big \|^2 \nonumber \\
    \leq &\frac{8G^2+24G^2M^2(\lambda \red{\beta})^{-2}\sigma^2+24G^2M^2\red{\beta}^{-2}\delta^2}{B}+\frac{10\varepsilon^2}{B}+\big \| \nabla_1 \mathbb{E}_{\zeta, \eta_{x}^{\tilde{d}}(\zeta)}[\mathcal{L}(x,\eta_x^{\tilde{d}}(\zeta))]\big \|^2, \nonumber
\end{align}
which gives the desired result.
\end{proof}

%\begin{proof}
%    The proof directly follows the proof developed of Lemma \ref{boundedmoment1}, by replacing $\big | \nabla_2 \mathcal{L}_{\zeta}(x,\eta_x^*(\zeta))\big|^2$ in \eqref{2nd_bound_of_var} with $\big |\nabla_2 \mathcal{L}_{\zeta}(x,\eta_{x}^{\tilde{d}}(\zeta)) \big|^2\leq \red{\beta}^2$ derived in \eqref{optimalrelation}.
%\end{proof}

%\subsection{Proof of Corollary \ref{corollary8}}

%\begin{proof}
%    If the function is K-smooth and the variance of its unbiased stochastic gradient estimator is upper bounded by $R_1$, following Corollary 2.2 in \citep{ghadimi2013stochastic}, the iteration complexity using SGD for finding an $\tilde{\delta}$-stationary point is $\mathcal{O}(\Delta K R_1\tilde{\delta}^{-4})$ .
%\end{proof}
\section{Proof of Theorem \ref{thm: convergenestsgd}}\label{proof: nested SGD convergence thm}
\mainconvergence*
\allowdisplaybreaks
\begin{proof}
   Regarding the objective function $\mathbb{E}_{\zeta \sim \mathbb{P}}\big[\Psi_{\zeta}(x)\big]$, we have the following descent lemma
    \begin{align}   
    \mathbb{E}_{\zeta\sim \mathbb{P}}\big[\Psi_{\zeta}(x_{t+1})\big]
        \leq \mathbb{E}_{\zeta\sim \mathbb{P}}\big[\Psi_{\zeta}(x_t)\big] + \langle \nabla \mathbb{E}_{\zeta\sim \mathbb{P}}\big[\Psi_{\zeta}(x_t)\big],x_{t+1}-x_t \rangle + \frac{K\gamma_t^2}{2} \big \| x_{t+1}-x_t\big \|^2.
        \label{descentlemma}
    \end{align}
    The proof of \eqref{descentlemma} is provided in Appendix \ref{pfdescentlemma}. 
    Replace $x_{t+1} - x_t $ by $-\gamma_t
    \hat{g}^{B}_t$, above inequality implies
    \begin{align}
        \mathbb{E}_{\zeta\sim \mathbb{P}}\big[\Psi_{\zeta}(x_{t+1})\big]
        \leq \mathbb{E}_{\zeta\sim \mathbb{P}}\big[\Psi_{\zeta}(x_t)\big] -\gamma_t \langle \nabla \mathbb{E}_{\zeta\sim \mathbb{P}}\big[\Psi_{\zeta}(x_t)\big],\hat{g}_t \rangle + \frac{K\gamma_t^2}{2} \big \| \hat{g}_t \big \|^2.
    \end{align}
    Since during parameters' update, the inexact mini-batch stochastic gradient, $\hat{g}^B_t$ is utilized, where its randomness comes from $x_t$, $\zeta$, $\big\{\xi\big\}_{\tilde{B}}$ and $\eta_{x}^{\tilde{d}}(\zeta)$. Taking expectation over $\zeta,\eta_{x}^{\tilde{d}}(\zeta), \big\{\xi \big\}_{B}$ on both sides conditioned on $x_t$, we have 
    \begin{align}
        &\mathbb{E}_{\zeta\sim \mathbb{P},\eta_{x_t}^{\tilde{d}}(\zeta),\xi_{B}\sim\nu} \Big[ \Psi_{\zeta}(x_{t+1})|x_t \Big]\nonumber\\
        \overset{(i)}{\leq}& \mathbb{E}_{\zeta\sim \mathbb{P}, \eta_{x_t}^{\tilde{d}}(\zeta),\xi_{B}\sim\nu} \Big[ \Psi_{\zeta}(x_{t})| x_t \Big] - \mathbb{E}_{\zeta\sim \mathbb{P},\eta_{x_t}^{\tilde{d}}(\zeta),\xi_{B}\sim \nu} \Big [  \langle \nabla \mathbb{E}_{\zeta\sim \mathbb{P}}\big[\Psi_{\zeta}(x_t)\big],\gamma_t \hat{g}^B_t \rangle | x_t \Big ] \nonumber\\
        &\qquad+ \mathbb{E}_{\zeta\sim \mathbb{P}, \eta_{x_t}^{\tilde{d}}(\zeta),\xi_{B}\sim \nu} \Big [ \frac{K\gamma_t^2}{2} \big \| \hat{g}^B_t \big \|^2 | x_t \Big] \nonumber\\
        \overset{(ii)}{\leq}& \mathbb{E}_{\zeta\sim \mathbb{P},\eta_{x_t}^{\tilde{d}}(\zeta),\xi_{B}\sim \nu } \Big[ \Psi_{\zeta}(x_{t})|x_t \Big] - \mathbb{E}_{\zeta\sim \mathbb{P},\eta_{x_t}^{\tilde{d}}(\zeta), \xi_{B}\sim\nu} \Big [ \left \langle \nabla \mathbb{E}_{\zeta\sim \mathbb{P}}\big[\Psi_{\zeta}(x_t)\big],\gamma_t \hat{g}^B_t\right \rangle | x_t \Big ]\nonumber \\ 
        &\qquad + \frac{K\gamma_t^2(R_1+10\varepsilon^2)}{2}\nonumber\\
        &\qquad +\frac{K\gamma_t^2}{2} \big \| \nabla \mathbb{E}_{\zeta\sim \mathbb{P},\eta_{x_t}^{\tilde{d}}(\zeta)}\big[\mathcal{L}_{\zeta}(x_t,\eta_{x_t}^{\tilde{d}}(\zeta))\big] \big \|^2 \nonumber\\
        \overset{(iii)}{\leq} & \mathbb{E}_{\zeta\sim \mathbb{P}, \eta_{x_t}^{\tilde{d}}(\zeta),\xi_{B}\sim \nu} \Big[ \Psi_{\zeta}(x_{t}) | x_t \Big] - \gamma_t\mathbb{E}_{\zeta\sim \mathbb{P}, \eta_{x_t}^{\tilde{d}}(\zeta), \xi_{B}\sim \nu} \Big [  \langle \nabla \mathbb{E}_{\zeta\sim \mathbb{P}}\big[\Psi_{\zeta}(x_t)\big], \hat{g}^B_t - g^B_t+g^B_t \rangle | x_t \Big ]\nonumber\\
         &\qquad+K\gamma_t^2
        \big \| 
        \nabla \mathbb{E}_{\zeta\sim \mathbb{P},\eta_{x_t}^{\tilde{d}}(\zeta) }\big[\mathcal{L}_{\zeta}(x_t,\eta_{x_t}^{\tilde{d}}(\zeta))\big]
        -\nabla \mathbb{E}_{\zeta\sim \mathbb{P},\eta_{x_t}^{\tilde{d}}(\zeta) }\big[ \Psi_{\zeta}(x_t)\big] \big \|^2 \nonumber\\ 
        &\qquad+
        K\gamma_t^2 \big\|\nonumber
        \nabla \mathbb{E}_{\zeta\sim \mathbb{P},\eta_{x_t}^{\tilde{d}}(\zeta)}\big[\Psi_{\zeta}(x_t)\big]\big \|^2\\
        &\qquad + \frac{KR_1\gamma_t^2}{2}+5K\varepsilon^2\gamma_t^2 \nonumber\\
        %\overset{}{=} & \mathbb{E}_{\zeta\sim \mathbb{P}, \eta_{x_t}^{\tilde{d}}(\zeta),\xi_{\tilde{B}}\sim \nu} \Big[ \Psi_{\zeta}(x_{t}) | x_t \Big] - \gamma_t\mathbb{E}_{\zeta\sim \mathbb{P}, \eta_{x_t}^{\tilde{d}}(\zeta),\xi_{\tilde{B}}\sim \nu} \Big [ \big \langle \nabla \mathbb{E}_{\zeta\sim \mathbb{P}}\big[\Psi_{\zeta}(x_t)\big], \hat{g}^B_t - g^B_t+g^B_t\big \rangle | x_t \Big ]\nonumber\\
        % &\qquad+K\gamma_t^2
        %\mathbb{E}_{\zeta\sim \mathbb{P},\eta_{x_t}^{\tilde{d}}(\zeta)}\Big[
        %\big \| \big[
        %\nabla\mathcal{L}_{\zeta}(x_t,\eta_{x_t}^{\tilde{d}}(\zeta))
        %-\nabla  \Psi_{\zeta}(x_t)\big] \big \|^2 | x_t\Big]\nonumber\\ 
        %&\qquad+
        %K\gamma_t^2\big\|\nonumber
        %\nabla \mathbb{E}_{\zeta\sim \mathbb{P}}\big[\Psi_{\zeta}(x_t)\big]\big \|^2 \\
        %&\qquad + \frac{KR_1\gamma_t^2}{2}+5KG^2\varepsilon^2\gamma_t^2 \nonumber\\
        \overset{(iv)}{\leq} & \mathbb{E}_{\zeta\sim \mathbb{P}, \eta_{x_t}^{\tilde{d}}(\zeta),\xi_{B}\sim \nu} \Big[ \Psi_{\zeta}(x_{t}) | x_t \Big] - \gamma_t\mathbb{E}_{\zeta\sim \mathbb{P}, \eta_{x_t}^{\tilde{d}}(\zeta), \xi_{B}\sim \nu} \Big [ \big \langle \nabla \mathbb{E}_{\zeta\sim \mathbb{P}}\big[\Psi_{\zeta}(x_t)\big], \hat{g}^B_t - g^B_t+g^B_t\big \rangle | x_t \Big ]\nonumber\\
         &\qquad+K\gamma_t^2
        \mathbb{E}_{\zeta\sim \mathbb{P}}\Big[
        \big \| 
        \nabla_1\mathbb{E}_{\eta_{x_t}^{\tilde{d}}(\zeta)}\big[\mathcal{L}_{\zeta}(x_t,\eta_{x_t}^{\tilde{d}}(\zeta))\big]
        -\nabla  \Psi_{\zeta}(x_t) \big \|^2 | x_t\Big]\nonumber\\ 
        &\qquad+
        K\gamma_t^2 \big\|\nonumber
        \nabla \mathbb{E}_{\zeta\sim \mathbb{P}}\big[\Psi_{\zeta}(x_t)\big]\big \|^2 + \frac{KR_1\gamma_t^2}{2}+5K\varepsilon^2\gamma_t^2 \nonumber\\
         \overset{(v)}{\leq} & \mathbb{E}_{\zeta\sim \mathbb{P}, \eta_{x_t}^{\tilde{d}}(\zeta),\xi_{B}\sim \nu} \Big[ \Psi_{\zeta}(x_{t})| x_t \Big] - \gamma_t\mathbb{E}_{\zeta\sim \mathbb{P}, \eta_{x_t}^{\tilde{d}}(\zeta),\xi_{B}\sim \nu} \Big [  \langle \nabla \mathbb{E}_{\zeta\sim \mathbb{P}}\big[\Psi_{\zeta}(x_t)\big], \hat{g}^B_t - g^B_t \rangle | x_t \Big ]\nonumber\\
        &\qquad -\gamma_t \mathbb{E}_{\zeta\sim \mathbb{P}, \eta_{x_t}^{\tilde{d}}(\zeta),\xi_{B}\sim \nu} \Big[\langle \nabla \mathbb{E}_{\zeta\sim \mathbb{P}}\big[\Psi_{\zeta}(x_t)\big],g^B_t \rangle | x_t \Big] \nonumber \\
        &\qquad + K\gamma_t^2 \big \| \nabla \mathbb{E}_{\zeta\sim\mathbb{P}}\big[\Psi_{\zeta}(x_t)\big]\big \|^2\nonumber \\
        &\qquad +K\varepsilon^2\gamma_t^2+ \frac{KR_1\gamma_t^2}{2}+5K\varepsilon^2\gamma_t^2
         \nonumber\\
         \overset{}{=}& \mathbb{E}_{\zeta\sim \mathbb{P}, \eta_{x_t}^{\tilde{d}}(\zeta),\xi_{B}\sim\nu} \Big[\Psi_{\zeta}(x_{t})| x_t \Big] - \gamma_t\mathbb{E}_{\zeta\sim \mathbb{P}} \Big [ \big \langle \nabla \mathbb{E}_{\zeta\sim \mathbb{P}}\big[\Psi_{\zeta}(x_t)\big], \mathbb{E}_{\eta_{x_t}^{\tilde{d}}(\zeta),\xi_{B}\sim \nu}[\hat{g}^B_t - g^B_t]\big \rangle| x_t \Big ]\nonumber\\
        &\qquad-(\gamma_t-K\gamma_t^2) \big \|\nabla \mathbb{E}_{\zeta \sim \mathbb{P}}\big[ \Psi_{\zeta}(x_t) \big] \big \|^2
        \nonumber\\
        &\qquad  + \frac{KR_1\gamma_t^2}{2}+6K\varepsilon^2\gamma_t^2\nonumber\\
        \overset{(vi)}{=}& \mathbb{E}_{\zeta\sim \mathbb{P}, \eta_{x_t}^{\tilde{d}}(\zeta), \xi_{B}\sim \nu} \Big[\Psi_{\zeta}(x_{t})| x_t \Big] - \gamma_t\mathbb{E}_{\zeta\sim \mathbb{P}} \Big [ \big \langle \nabla \mathbb{E}_{\zeta\sim \mathbb{P}}\big[\Psi_{\zeta}(x_t)\big], \nabla_1\mathbb{E}_{\eta_x^{\tilde{d}}(\zeta)}[\mathcal{L}_{\zeta}(x_t,\eta_{x}^{\tilde{d}}(\zeta))-\nabla \Psi_{\zeta}(x_t)]\big \rangle| x_t \Big ]\nonumber\\
        &\qquad-(\gamma_t-K\gamma_t^2) \big \|\nabla \mathbb{E}_{\zeta \sim \mathbb{P}}\big[ \Psi_{\zeta}(x_t) \big] \big \|^2
        \nonumber\\
        &\qquad  + \frac{KR_1\gamma_t^2}{2}+6K\varepsilon^2\gamma_t^2\nonumber\\
         \overset{(vii)}{\leq}& \mathbb{E}_{\zeta\sim \mathbb{P}, \eta_{x_t}^{\tilde{d}}(\zeta),\xi_{B}\sim \nu} \Big[ \Psi_{\zeta}(x_{t})| x_t \Big] + \gamma_t\mathbb{E}_{\zeta\sim \mathbb{P}}\Big[\big \|\nabla \mathbb{E}_{\zeta\sim \mathbb{P}}\big[\Psi_{\zeta}(x_t)\big] \big \|\cdot \big \|\nabla_1\mathbb{E}_{ \eta_{x_t}^{\tilde{d}}(\zeta)}[\mathcal{L}_{\zeta}(x_t,\eta_{x_t}^{\tilde{d}}(\zeta))]-\nabla \Psi_{\zeta}(x_t)\big] \big \||x_t\Big]\nonumber\\
        &\qquad-(\gamma_t-K\gamma_t^2) \big \|\nabla \mathbb{E}_{\zeta\sim \mathbb{P}}\big[\Psi_{\zeta}(x_t)\big] \big \|^2  \nonumber \\
        & \qquad  + \frac{KR_1\gamma_t^2}{2} +6K\varepsilon^2\gamma_t^2 \nonumber\\
        =& \mathbb{E}_{\zeta\sim \mathbb{P}, \eta_{x_t}^{\tilde{d}}(\zeta),\xi_{B}\sim \nu} \Big[ \Psi_{\zeta}(x_{t})| x_t \Big] + \gamma_t\big \|\nabla \mathbb{E}_{\zeta\sim \mathbb{P}}\big[\Psi_{\zeta}(x_t)\big] \big \|\cdot \mathbb{E}_{\zeta\sim \mathbb{P}}\big \|\nabla_1\mathbb{E}_{\eta_{x_t}^{\tilde{d}}(\zeta)}[\mathcal{L}_{\zeta}(x_t,\eta_{x_t}^{\tilde{d}}(\zeta))]-\nabla \Psi_{\zeta}(x_t)\big] \big \|\nonumber\\
        &\qquad-(\gamma_t-K\gamma_t^2) \big \|\nabla \mathbb{E}_{\zeta\sim \mathbb{P}}\big[\Psi_{\zeta}(x_t)\big] \big \|^2  \nonumber \\
        & \qquad  + \frac{KR_1\gamma_t^2}{2} +6K\varepsilon^2\gamma_t^2 \nonumber\\
        \overset{(viii)}{\leq}& \mathbb{E}_{\zeta\sim \mathbb{P}(\zeta),\xi_{B}\sim\nu,\eta_{x_t}^{\tilde{d}}} \Big[ \Psi_{\zeta}(x_{t}) | x_t \Big] + \gamma_t \varepsilon\big \|\nabla \mathbb{E}_{\zeta\sim \mathbb{P}}\big[\Psi_{\zeta}(x_t)\big] \big\| -(\gamma_t-K\gamma_t^2) \big \|\nabla \mathbb{E}_{\zeta\sim \mathbb{P}}\big[\Psi_{\zeta}(x_t)\big] \big \|^2 \nonumber \\
        & \qquad + \frac{KR_1\gamma_t^2}{2} +6K\varepsilon^2\gamma_t^2,
        %\overset{}{=}&\mathbb{E}_{\zeta\sim \mathbb{P}} \Big[ \Psi_{\zeta}(x_{t}) | x_t \Big] + \gamma_t G\varepsilon\mathbb{E}_{ \eta_{x}^{\tilde{d}}(\zeta)}\Big[\big \|\nabla \mathbb{E}_{\zeta\sim \mathbb{P}}\big[\Psi_{\zeta}(x_t)\big] \big \|| x_t\Big] \nonumber\\
        %&\qquad-(\gamma_t-K\gamma_t^2)\mathbb{E}_{\eta_{x}^{\tilde{d}}(\zeta)}\Big[ \big \|\nabla \mathbb{E}_{\zeta\sim \mathbb{P}}\big[\Psi_{\zeta}(x_t)\big] \big \|^2 | x_t\Big] +\frac{KR_1\gamma_t^2}{2} +5KG^2\varepsilon^2\gamma_t^2, \nonumber
    \end{align}
        where (i) applies descent lemma \eqref{descentlemma}; (ii) applies \eqref{bounded var1-1} stated in lemma \ref{thm: boundedmoment3} with $B=1$; (iii) applies $(a+b)^2\leq 2a^2+2b^2$ to further upper bound $\|\nabla \mathbb{E}_{\zeta\sim\mathbb{P},\eta_{x_t}^{\tilde{d}}(\zeta)}[\mathcal{L}_{\zeta}(x_t, \eta_{x_t}^{\tilde{d}}(\zeta))] \|^2$ by \\
        $\allowdisplaybreaks 2\|\nabla_1 \mathbb{E}_{\zeta\sim\mathbb{P},\eta_{x_t}^{\tilde{d}}(\zeta)}[\mathcal{L}_{\zeta}(x_t, \eta_{x_t}^{\tilde{d}}(\zeta))] - \nabla \mathbb{E}_{\zeta\sim\mathbb{P}}[\Psi_{\zeta}(x_t)] \|^2+2\|\nabla \mathbb{E}_{\zeta\sim\mathbb{P}}[\Psi_{\zeta}(x_t)] \|^2$ (the expectation over $\eta_{x_t}^{\tilde{d}}(\zeta)$ can be neglected as $\Psi_{\zeta}(x_t)$ doesn't contain randomness from $\eta_{x_t}^{\tilde{d}}(\zeta)$); (iv) applies Jensen's inequality to extract $\mathbb{E}_{\zeta\sim\mathbb{P},\eta_{x_t}^{\tilde{d}}(\zeta)}$ out from squared norm; (v) applies condition~\eqref{optimalrelation} stated in Theorem \ref{thm: grad_error_bound}; (vi) moves expectation over $\xi_{B}$ inside inner product;
        (vii) applies Cauchy-Schwarz inequality; (viii) again applies condition~\eqref{optimalrelation} stated Theorem \ref{thm: grad_error_bound} as such relationship holds for every $\zeta$.\\
        Then, for any $\gamma_t < \frac{1}{2K}$ by choice, we have $(\gamma_t - K\gamma_t^2)> \frac{\gamma_t}{2}$. Taking expectation over $x_t$, the above inequality further transformed to
    \begin{align}
    &\mathbb{E}_{x_t,\zeta\sim \mathbb{P},\eta_{x}^{\tilde{d}}(\zeta),\xi_{B}\sim \nu}\Big[\Psi_{\zeta}(x_{t+1}) \Big] \nonumber \\
    \leq & \mathbb{E}_{x_t,\zeta\sim \mathbb{P},\eta_{x}^{\tilde{d}}(\zeta),\xi_{B}\sim \nu} \Big[ \Psi_{\zeta}(x_{t}) \Big] + \gamma_t \varepsilon \mathbb{E}_{x_t}\big \|\nabla \mathbb{E}_{\zeta\sim \mathbb{P}}\big[\Psi_{\zeta}(x_t)\big] \big \|  -\frac{\gamma_t}{2}\mathbb{E}_{x_t}\big \|\nabla \mathbb{E}_{\zeta\sim \mathbb{P}}\big[\Psi_{\zeta}(x_t)\big] \big \|^2\nonumber\\
    &\qquad + \frac{KR_{1}\gamma_t^2}{2} +6K\varepsilon^2\gamma_t^2. \nonumber
    \end{align}
     %Notice that $\mathbb{E}_{x_t,\zeta \sim \mathbb{P},\xi_{\tilde{B}}\sim \nu}[ \Psi_{\zeta}(x_{t}) ]$ is a constant by tower property. The same argument applies for $\mathbb{E}_{x_t, \zeta\sim \mathbb{P},\xi_{\tilde{B}}\sim \nu}\big[\big \|\nabla \mathbb{E}_{\zeta\sim \mathbb{P}}\big[\Psi_{\zeta}(x_t)\big] \big \|^2| x_t\big]$.
     Re-arranging above terms, we have
    \begin{align}
        & \frac{\gamma_t}{2}\mathbb{E}_{x_t}\Big(\big \| \nabla \mathbb{E}_{\zeta\sim \mathbb{P}}\big[\Psi_{\zeta}(x_t)\big] \big \|^2 -2\varepsilon\big \| \nabla \mathbb{E}_{\zeta\sim \mathbb{P}}\big[\Psi_{\zeta}(x_t)\big] \big \|\Big) \nonumber\\ 
        =& \frac{\gamma_t}{2}\mathbb{E}_{x_t}\Big(\big \| \nabla \mathbb{E}_{\zeta\sim\mathbb{P}}\big[\Psi_{\zeta}(x_t)\big] \big \|-\varepsilon\Big)^2 - \frac{\gamma_t\varepsilon^2}{2}\nonumber\\
        \leq& \mathbb{E}_{x_t,\zeta\sim \mathbb{P},\eta_{x_t}^{\tilde{d}}(\zeta),\xi_{B}\sim\nu}\big[\Psi_{\zeta}(x_t) - \Psi_{\zeta}(x_{t+1})\big]  + \frac{KR_{1}\gamma_t^2}{2} + 6K\varepsilon^2\gamma_t^2. \nonumber
    \end{align}
    Re-arranging above inequality, we have
    \begin{align}
        \frac{\gamma_t}{2}\mathbb{E}_{x_t}\Big(\big \| \nabla \mathbb{E}_{\zeta\sim\mathbb{P}}\big[\Psi_{\zeta}(x_t)\big] \big \|-\varepsilon\Big)^2
        \leq& \mathbb{E}_{x_t,\zeta\sim \mathbb{P},\eta_{x_t}^{\tilde{d}}(\zeta),\xi_{B}\sim\nu}\big[\Psi_{\zeta}(x_t) - \Psi_{\zeta}(x_{t+1})\big] \nonumber\\
        &\qquad+ \frac{KR_{1}\gamma_t^2}{2} + 6K\varepsilon^2\gamma_t^2+\frac{\gamma_t\varepsilon^2}{2}. \nonumber
    \end{align}
     Applying $\frac{(a+b)^2}{2}\leq a^2+b^2$ and re-arranging above terms, we have
        \begin{align}
        &\frac{\gamma_t}{4} \mathbb{E}_{x_t}\big \| \nabla \mathbb{E}_{\zeta\sim \mathbb{P}}\big[\Psi_{\zeta}(x_t)\big] \big \|^2\nonumber\\
        \leq& \frac{\gamma_t}{2}\mathbb{E}_{x_t}\Big[\Big(\big \| \nabla \mathbb{E}_{\zeta\sim \mathbb{P}}\big[\Psi_{\zeta}(x_t)\big] \big \|-\varepsilon\Big)^2\Big] +\frac{\gamma_t \varepsilon^2}{2}\nonumber\\
        \leq& \mathbb{E}_{x_t,\zeta\sim \mathbb{P},\eta_{x_t}^{\tilde{d}}(\zeta),\xi_{B}\sim\nu}\big[\Psi_{\zeta}(x_t) - \Psi_{\zeta}(x_{t+1})\big] + \frac{K R_{1}\gamma_t^2}{2} + 6K \varepsilon^2\gamma_t^2 + \varepsilon^2\gamma_t. \nonumber
    \end{align}
    Summing the above inequality from $t=0$ to $T-1$ leads to
    \begin{align}
        &\sum_{t=0}^{T-1}\frac{\gamma_t}{4}\mathbb{E}_{x_t}\big \| \nabla \mathbb{E}_{\zeta\sim \mathbb{P}}\big[\Psi_{\zeta}(x_t)\big] \big \|^2\nonumber\\
        \leq& \sum_{t=0}^{T-1}\mathbb{E}_{x_t, \zeta\sim \mathbb{P}, \eta_{x_t}^{\tilde{d}}(\zeta),\xi_{B}\sim\nu}\big[\Psi_{\zeta}(x_t)- \Psi(x_{t+1})  \big] + 6K \sum_{t=0}^{T-1}\gamma_t^2\varepsilon^2  + \frac{KR_1}{2}\sum_{t=0}^{T-1} \gamma_t^2 + \varepsilon^2\sum_{t=0}^{T-1}\gamma_t\nonumber\\
        \leq& \mathbb{E}_{\zeta\sim \mathbb{P}}\big[\Psi_{\zeta}(x_0)-\Psi_{\zeta}(x^*)\big] +6K\varepsilon^2 \sum_{t=0}^{T-1}\gamma_t^2  + \frac{KR_1}{2}\sum_{t=0}^{T-1} \gamma_t^2 + \varepsilon^2\sum_{t=0}^{T-1}\gamma_t. \nonumber
    \end{align}
    Denoting $\Delta = \mathbb{E}_{\zeta\sim \mathbb{P}}\big[\Psi_{\zeta}(x_0)-\Psi_{\zeta}(x^*)\big]$ and choosing constant learning rate $\gamma_t=\gamma$, we have
    \begin{align}
        \mathbb{E}_{x_{\tilde{t}}}\big[ \| \nabla \mathbb{E}_{\zeta \sim \mathbb{P}}\big[\Psi_{\zeta}(x_{\tilde{t}}) \big] \big\|^2\big] =& \frac{1}{T}\sum_{t=0}^{T-1}\mathbb{E}_{x_t} \|\nabla \mathbb{E}_{\zeta\sim\mathbb{P}}[\Psi(x_t)] \|^2 \nonumber \\
        \leq& \frac{4\Delta}{T\gamma}+\frac{24K \varepsilon^2\gamma^2 T}{T\gamma} + \frac{2KR_{1}\gamma^2T}{T\gamma}+\frac{4  \varepsilon^2 T\gamma}{T\gamma }.
        \label{nestsgdconv1}
    \end{align}
    Then, choosing $\gamma= \min\{\frac{1}{24K}, \frac{\varepsilon^2}{2KR_1}\}$, we immediately have
    \begin{align}
        \frac{24K\varepsilon^2\gamma^2T}{T\gamma}\leq \varepsilon^2 \text{ and } \frac{2KR_1\gamma^2T}{T\gamma}\leq \varepsilon^2.
    \end{align}
    To make $\frac{4\Delta}{T\gamma}= \varepsilon^2$, we have
    \begin{align}
        T= \frac{4\Delta}{\gamma\varepsilon^2}\geq \max\{\frac{96\Delta K}{\varepsilon^2},\frac{8\Delta KR_1}{\varepsilon^{4}}\} =\mathcal{O}(\Delta K R_1\varepsilon^{-4}).
    \end{align}
    Combining all inequalities together, we have
    \begin{align}
         &\mathbb{E}_{x_{\tilde{t}}}\big[ \| \nabla \mathbb{E}_{\zeta \sim \mathbb{P}}\big[\Psi_{\zeta}(x_{\tilde{t}}) \big] \big\|^2\big] \leq 7\varepsilon^2,
    \end{align}
    which gives the desired result.
\end{proof}
\subsection{Proof of Descent Lemma \texorpdfstring{\eqref{descentlemma}}{(descentlemma)}}\label{pfdescentlemma}
\begin{lemma}
Denote $\eta_{x_t}^*(\zeta)\in \arg\min_{\eta} \mathcal{L}_{\zeta}(x_t,\eta)$.
Then, for $\mathbb{E}_{\zeta\sim\mathbb{P}}[\Psi(x_t)]$, we have the following descent lemma
\begin{align}
    \mathbb{E}_{\zeta\sim \mathbb{P}}\big[\Psi_{\zeta}(x_{t+1})\big] \leq \mathbb{E}_{\zeta\sim \mathbb{P}}\big[\Psi_{\zeta}(x_t)\big] + \langle \nabla \mathbb{E}_{\zeta\sim\mathbb{P}}\big[\Psi_{\zeta}(x_t)\big], x_{t+1}-x_t \rangle + \frac{K}{2} \left \| x_{t+1} - x_t \right \|^2, \label{eq: directional smooth descent lemma}
\end{align}
where $K = G^2 (\lambda \red{\beta})^{-1}M+L$.
\end{lemma}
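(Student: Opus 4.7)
The plan is to combine three ingredients: Lemma \ref{jin} (which identifies $\nabla \Psi_{\zeta}(x_t)$ with $\nabla_1 \mathcal{L}_{\zeta}(x_t, \eta_{x_t}^*(\zeta))$), an envelope inequality, and the directional smoothness stated in Lemma \ref{directionsmooth}. First, by the optimality of $\eta_{x_{t+1}}^*(\zeta)$ for the inner problem at $x_{t+1}$,
\begin{align}
\Psi_{\zeta}(x_{t+1}) = \mathcal{L}_{\zeta}(x_{t+1}, \eta_{x_{t+1}}^*(\zeta)) \leq \mathcal{L}_{\zeta}(x_{t+1}, \eta_{x_t}^*(\zeta)), \nonumber
\end{align}
while $\mathcal{L}_{\zeta}(x_t, \eta_{x_t}^*(\zeta)) = \Psi_{\zeta}(x_t)$ by definition. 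It therefore suffices to bound the increment of the auxiliary function $h_{\zeta}(x) := \mathcal{L}_{\zeta}(x, \eta_{x_t}^*(\zeta))$ along the segment from $x_t$ to $x_{t+1}$.

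Second, writing $\Delta := x_{t+1}-x_t$ and applying the fundamental theorem of calculus,
\begin{align}
h_{\zeta}(x_{t+1}) - h_{\zeta}(x_t) = \langle \nabla h_{\zeta}(x_t), \Delta \rangle + \int_0^1 \langle \nabla h_{\zeta}(x_t+s\Delta) - \nabla h_{\zeta}(x_t), \Delta \rangle\, ds. \nonumber
\end{align}
By Lemma \ref{jin}, $\nabla h_{\zeta}(x_t) = \nabla_1 \mathcal{L}_{\zeta}(x_t, \eta_{x_t}^*(\zeta)) = \nabla \Psi_{\zeta}(x_t)$. Now take expectation over $\zeta$, pull it through the integral (Fubini), and apply Cauchy--Schwarz followed by Jensen's inequality ($\mathbb{E}\|\cdot\| \leq (\mathbb{E}\|\cdot\|^2)^{1/2}$). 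Invoking Lemma \ref{directionsmooth} with the pair $(x,x')=(x_t,\, x_t+s\Delta)$ yields
\begin{align}
\mathbb{E}_{\zeta}\|\nabla_1 \mathcal{L}_{\zeta}(x_t + s\Delta,\eta_{x_t}^*(\zeta)) - \nabla \Psi_{\zeta}(x_t)\| \leq K s \|\Delta\|, \nonumber
\end{align}
so the remainder integral is bounded by $\int_0^1 K s\|\Delta\|^2\, ds = \tfrac{K}{2}\|\Delta\|^2$. Combining with the envelope inequality and pulling the expectation inside the gradient (via Lemma \ref{jin} applied under $\mathbb{E}_{\zeta}$) gives the desired descent inequality.

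The key obstacle is that $\Psi_{\zeta}$ is \emph{not} $K$-smooth in the classical sense: the inner minimizer $\eta^*_{x}(\zeta)$ depends on $x$, so in general $\nabla \Psi_{\zeta}(x') \neq \nabla_1 \mathcal{L}_{\zeta}(x',\eta_x^*(\zeta))$, which prevents a direct appeal to the standard textbook descent lemma. The envelope trick circumvents this by replacing $\Psi_{\zeta}(x_{t+1})$ with $\mathcal{L}_{\zeta}(x_{t+1},\eta_{x_t}^*(\zeta))$ and thereby reduces everything to smoothness along the segment at the fixed anchor $\eta_{x_t}^*(\zeta)$ — precisely the regime in which Lemma \ref{directionsmooth} applies, since that lemma's constant $K$ crucially exploits the first-order optimality condition $\mathbb{E}_{\xi}[(f^*)'(\cdot)] = 1$ at the anchor point $x_t$.
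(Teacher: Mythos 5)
Your proof is correct and follows essentially the same route as the paper's: the envelope inequality $\Psi_{\zeta}(x_{t+1}) \le \mathcal{L}_{\zeta}(x_{t+1},\eta_{x_t}^*(\zeta))$, the fundamental theorem of calculus along the segment with the inner variable anchored at $\eta_{x_t}^*(\zeta)$, Lemma~\ref{jin} to identify $\nabla h_{\zeta}(x_t)$ with $\nabla\Psi_{\zeta}(x_t)$, and Lemma~\ref{directionsmooth} (with Jensen) to bound the remainder by $\tfrac{K}{2}\|\Delta\|^2$. The only cosmetic difference is ordering (you invoke the envelope step first and the paper invokes it last) and that you are slightly more explicit about the Cauchy--Schwarz/Jensen interchange between the norm, the $\zeta$-expectation, and the integral; the paper passes over that step implicitly.
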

\begin{proof}
    Notice that, applying Jensen's inequality and taking square-root on both sides, Lemma \ref{thm:directionsmooth} implies
    \begin{align}
        \big\| \nabla\mathbb{E}_{\zeta\sim \mathbb{P}}[ \Psi_{\zeta}(x)]- \nabla_1\mathbb{E}_{\zeta\sim \mathbb{P}}[ \mathcal{L}_{\zeta}(x',\eta_{x}^*(\zeta))]   \big \| \leq K\big \| x-x' \big \|. \nonumber
    \end{align}
    From fundamental theorem of calculus, we have
    \begin{align}
        &\Big |\mathbb{E}_{\zeta\sim \mathbb{P}}\big[\mathcal{L}_{\zeta}(x_{t+1},\eta_{x_t}^*(\zeta))\big] - \mathbb{E}_{\zeta\sim \mathbb{P}}\big[\Psi_{\zeta}(x_t)\big] - \langle \nabla \mathbb{E}_{\zeta\sim \mathbb{P}}\big[\Psi_{\zeta}(x_t)\big],x_{t+1} -x_t \rangle \ \Big | \nonumber\\
        =& \Big | \int_{0}^{1} \langle \nabla_1 \mathbb{E}_{\zeta\sim\mathbb{P}}\big[ \mathcal{L}_{\zeta}(x_t+ t(x_{t+1}-x_t),\eta_{x_t}^*(\zeta))\big], x_{t+1} - x_t \rangle - \langle \nabla \mathbb{E}_{\zeta\sim \mathbb{P}}\big[\Psi_{\zeta}(x_t)\big],x_{t+1}-x_t \rangle \mathrm{d}t\Big| \nonumber\\
        =& \Big | \int_{0}^{1} \langle \nabla_1 \mathbb{E}_{\zeta\sim \mathbb{P}} \big[\mathcal{L}_{\zeta}(x_t+ t(x_{t+1}-x_t),\eta_{x_t}^*(\zeta))\big] - \nabla_1 \mathbb{E}_{\zeta\sim \mathbb{P}}\big[\mathcal{L}_{\zeta}(x_t,\eta_{x_t}^*(\zeta))\big] , x_{t+1} - x_t \rangle \mathrm{d}t \Big |\nonumber\\
        \overset{(i)}{\leq} & \Big | \int_{0}^{1} \big \|\nabla_1 \mathbb{E}_{\zeta}\big[\mathcal{L}_{\zeta}(x_t+ t(x_{t+1}-x_t),\eta_{x_t}^*(\zeta))\big] - \nabla_1 \mathbb{E}_{\zeta\sim \mathbb{P}}\big[ \mathcal{L}_{\zeta}(x_t,\eta_x^*(\zeta))\big] \big \| \left \| x_{t+1} - x_t \right \| \mathrm{d}t\Big| \nonumber \\
        \overset{(ii)}{\leq} & \Big | \int_{0}^{1} t K\left \| x_{t+1} - x_t \right \|^2 \mathrm{d}t \Big| \nonumber \\
        \overset{}{\leq} & \frac{K}{2} \big \| x_{t+1} - x_t \big \|^2, \nonumber
    \end{align}
    where (i) applies Cauchy-Schwarz inequality; (ii) applies directional smoothness property stated at Lemma \ref{thm:directionsmooth}.\\
    Re-arranging above inequality, we have
    \begin{align}
        \mathbb{E}_{\zeta\sim \mathbb{P}}\big[\mathcal{L}_{\zeta}(x_{t+1},\eta_{x_t}^*(\zeta))\big] \leq \mathbb{E}_{\zeta\sim \mathbb{P}}\big[\Psi_{\zeta}(x_t)\big] + \langle \nabla \mathbb{E}_{\zeta\sim \mathbb{P}}\big[\Psi_{\zeta}(x_t)\big], x_{t+1}-x_t \rangle + \frac{K}{2} \left \| x_{t+1} - x_t \right \|^2.\nonumber
    \end{align} 
    Since for each $\zeta$, $\eta_{x_{t+1}}^*(\zeta) \in\arg\min_{\eta} \mathcal{L}_{\zeta}(x_{t+1},\eta)$, it holds that $\Psi_{\zeta}(x_{t+1}) = \mathcal{L}_{\zeta}(x_{t+1},\eta_{x_{t+1}}^*(\zeta)) \leq \mathcal{L}_{\zeta}(x_{t+1},\eta_{x_t}^*(\zeta)) $. Taking expectation over $\zeta$, we have $\mathbb{E}_{\zeta\sim \mathbb{P}}\big[\Psi_{\zeta}(x_{t+1})\big] \leq \mathbb{E}_{\zeta\sim \mathbb{P}}\big[\mathcal{L}_{\zeta}(x_{t+1},\eta_{x_t}^*(\zeta))\big] $. Combining this fact with above inequality gives us the desired result,
    \begin{align}
         \mathbb{E}_{\zeta\sim \mathbb{P}}\big[\Psi_{\zeta}(x_{t+1})\big] \leq \mathbb{E}_{\zeta\sim \mathbb{P}}\big[\Psi_{\zeta}(x_t)\big] + \langle \nabla \mathbb{E}_{\zeta\sim\mathbb{P}}\big[\Psi_{\zeta}(x_t)\big], x_{t+1}-x_t \rangle + \frac{K}{2} \left \| x_{t+1} - x_t \right \|^2. \nonumber
    \end{align}
\end{proof}
\section{Proof of Corollary \ref{thm: total complexity bound}}\label{Appendix: discussion of complexity}
\totalcomplexity*
\begin{proof}
    According to Theorem~\ref{thm: innersgd}, for Algorithm~\ref{alg1} to output $\eta_{x}^{\tilde{d}}(\zeta)$ satisfying the condition \eqref{eq: inner near-optima} in Theorem~\ref{thm: grad_error_bound}, a sample complexity of $\mathcal{O}(\hat{\Delta} G^4 K' R_2 \varepsilon^{-4})$ is required.
    Furthermore, by Theorem~\ref{thm: convergenestsgd}, Algorithm~\ref{alg2} requires $\mathcal{O}(1)$ mini-batch of $\zeta$ and $\xi$ per iteration and runs for $\mathcal{O}(\Delta K R_1 \varepsilon^{-4})$ iterations.
    Combining both results, the total sample complexity is $\mathcal{O}(\Delta \hat{\Delta} R_1 R_2 K K'G^4 \varepsilon^{-8})\sim \mathcal{O}(\varepsilon^{-8})$.
    
    For the total iteration complexity, i.e., $T\times D$, it can be improved as follows. By setting $B = \Theta(\varepsilon^{-2})$ and applying the conclusion from Lemma~\ref{thm: boundedmoment3} along with the descent inequality, we have
    \begin{align}
        &\mathbb{E}_{\zeta\sim \mathbb{P},\eta_{x_t}^{\tilde{d}}(\zeta),\xi_{B}\sim\nu} \Big[ \Psi_{\zeta}(x_{t+1})|x_t \Big]\nonumber\\
        \leq& \mathbb{E}_{\zeta\sim \mathbb{P},\eta_{x_t}^{\tilde{d}}(\zeta),\xi_{B}\sim \nu } \Big[ \Psi_{\zeta}(x_{t})|x_t \Big] - \mathbb{E}_{\zeta\sim \mathbb{P},\eta_{x_t}^{\tilde{d}}(\zeta), \xi_{B}\sim\nu} \Big [ \left \langle \nabla \mathbb{E}_{\zeta\sim \mathbb{P}}\big[\Psi_{\zeta}(x_t)\big],\gamma_t \hat{g}^B_t\right \rangle | x_t \Big ]\nonumber \\ 
        &\qquad + \frac{K\gamma_t^2(R_1\varepsilon^2+10\varepsilon^4)}{2}\nonumber\\
        &\qquad +\frac{K\gamma_t^2}{2} \big \| \nabla \mathbb{E}_{\zeta\sim \mathbb{P},\eta_{x_t}^{\tilde{d}}(\zeta)}\big[\mathcal{L}_{\zeta}(x_t,\eta_{x_t}^{\tilde{d}}(\zeta))\big] \big \|^2 \nonumber.
    \end{align}
    Following the same logic as Proof \ref{proof: nested SGD convergence thm}, we then have
    \begin{align}
        &\mathbb{E}_{x_t,\zeta\sim \mathbb{P},\eta_{x}^{\tilde{d}}(\zeta),\xi_{B}\sim \nu}\Big[\Psi_{\zeta}(x_{t+1}) \Big] \nonumber \\
    \leq & \mathbb{E}_{x_t,\zeta\sim \mathbb{P},\eta_{x}^{\tilde{d}}(\zeta),\xi_{B}\sim \nu} \Big[ \Psi_{\zeta}(x_{t}) \Big] + \gamma_t \varepsilon \mathbb{E}_{x_t}\big \|\nabla \mathbb{E}_{\zeta\sim \mathbb{P}}\big[\Psi_{\zeta}(x_t)\big] \big \|  -\frac{\gamma_t}{2}\mathbb{E}_{x_t}\big \|\nabla \mathbb{E}_{\zeta\sim \mathbb{P}}\big[\Psi_{\zeta}(x_t)\big] \big \|^2\nonumber\\
    &\qquad + \frac{KR_{1}\gamma_t^2\varepsilon^2}{2} +K\varepsilon^2\gamma_t^2+5K\gamma_t^2\varepsilon^{4}. \nonumber
    \end{align}
    Let $\gamma_t = \gamma$, re-arranging above inequality, summing $t$ from $0,\cdots,T-1$ and dividing by $T$, we have
    \begin{align}
        \frac{\gamma}{4T} \sum_{t=0}^{T-1}\mathbb{E}_{x_t}\big \| \nabla \mathbb{E}_{\zeta\sim \mathbb{P}}\big[\Psi_{\zeta}(x_t)\big] \big \|^2
        \leq \frac{\Delta}{T} + \frac{K R_{1}\gamma^2\varepsilon^2}{2} + {K \varepsilon^2\gamma^2} + {5K \varepsilon^4\gamma^2}+{\varepsilon^2\gamma}. \nonumber
    \end{align}
    For $\gamma=\min\{\frac{1}{2KR_1}, \frac{1}{4K} \}$, after $T=\max \{8\Delta KR_1, 16\Delta K \}\varepsilon^{-2}=\mathcal{O}(\Delta K R_1\varepsilon^{-2})$ iterations,
    above inequality further implies
    \begin{align}
        \mathbb{E}_{x_{\tilde{t}}}\big \| \nabla \mathbb{E}_{\zeta\sim \mathbb{P}}\big[\Psi_{\zeta}(x_{\tilde{t}})\big] \big \|^2\leq 7\varepsilon^2+5 \varepsilon^4=\mathcal{O}(\varepsilon^2). 
    \end{align}
    This concludes that by choosing $B,\tilde{B}\sim \Theta(\varepsilon^{-2})$ as suggested in Remark \ref{remark: large batch size for inner problem}, we have total iteration complexity $T\times D=\mathcal{O}(\Delta \hat{\Delta}R_1R_2K K' G^4\varepsilon^{-4})\sim \mathcal{O}(\varepsilon^{-4})$. Based on gradient dimension with respect to $x$ and $\eta$, we conclude the per-iteration complexity of algorithm \ref{alg1} and \ref{alg2} are $\mathcal{O}(1),\mathcal{O}(d)$ respectively. 
\end{proof}
%%%%%%%%%%%%%%%%%%%%%%%%%%%%%%%%%%%%%%%%%%%%%%%%%%%%%%%%%%%%%%%%%%%%%%%%%%%%%%%
%%%%%%%%%%%%%%%%%%%%%%%%%%%%%%%%%%%%%%%%%%%%%%%%%%%%%%%%%%%%%%%%%%%%%%%%%%%%%%%

\end{document}